\documentclass[12pt,a4paper, oneside,reqno]{article}

\usepackage[utf8]{inputenc}
\usepackage[english]{babel}

\usepackage[left=1in,right=1in,top=1in,bottom=1in]{geometry}

\usepackage{amsfonts,amssymb,amsmath,amsthm}

\usepackage{comment}
\usepackage[symbol]{footmisc} 
\usepackage{longtable}
\usepackage{enumerate}
\usepackage{cite}
\usepackage{hyperref}
\usepackage{tkz-graph}

\usepackage{fancyhdr} 
\theoremstyle{plain}
\newtheorem{theorem}{Theorem}
\newtheorem{lemma}[theorem]{Lemma}
\newtheorem{proposition}[theorem]{Proposition}

\newtheorem{definition}[theorem]{Definition}
\newtheorem{corollary}[theorem]{Corollary}

\usetikzlibrary{arrows}

\newtheorem*{theoremA1}{Theorem A1}
\newtheorem*{theoremA2}{Theorem A2}
\newtheorem*{theoremG1}{Theorem G1}
\newtheorem*{theoremG2}{Theorem G2}

\usepackage{fouriernc} 

\begin{document}


\bigskip

\noindent{\Large
The algebraic and geometric classification of \\
right alternative superalgebras}
 \footnote{
The  authors would like to thank the SRMC (Sino-Russian Mathematics Center in Peking University, Beijing, China) for its hospitality and excellent working conditions, where some parts of this work were done. The    work is supported by 
FCT   2023.08031.CEECIND and UID/00212/2025.}

 \bigskip

\begin{center}

 {\bf
Hani Abdelwahab\footnote{Department of Mathematics, 
 Mansoura University,  Mansoura, Egypt; \ haniamar1985@gmail.com}, 
   Ivan Kaygorodov\footnote{CMA-UBI, University of  Beira Interior, Covilh\~{a}, Portugal; \     Moscow Center for Fundamental and Applied Mathematics, Moscow,   Russia;   \    kaygorodov.ivan@gmail.com}   \&
   Abror Khudoyberdiyev \footnote{Institute of Mathematics Academy of
Sciences of Uzbekistan, Tashkent, Uzbekistan; National University of Uzbekistan, Tashkent, Uzbekistan; \ khabror@mail.ru}  
}

\end{center}

\noindent {\bf Abstract:}
{\it  
The algebraic and geometric classifications of complex 
$3$-dimensional right alternative superalgebras are given.
As a byproduct, we have the algebraic and geometric classification of the variety of $3$-dimensional 
$\mathfrak{perm}$, 
binary $\mathfrak{perm}$, 
associative, 
binary associative,
$\big(-1,1\big)$-,
and binary $\big(-1,1\big)$-superalgebras.

}

 \bigskip 

\noindent {\bf Keywords}:
{\it 
right alternative algebras, 
superalgebras,
algebraic classification,
geometric classification.}

\bigskip

\noindent {\bf MSC2020}:  
17A30 (primary);
17A70,
17D15,
14L30 (secondary).

	 \bigskip

 
\tableofcontents

\newpage

\section*{Introduction}

The algebraic classification (up to isomorphism) of algebras of dimension $n$ of a certain variety
defined by a family of polynomial identities is a classic problem in the theory of non-associative algebras 
$\big($see \cite{ ahk, G62, akl, als, ah, ikm}$\big)$.
There are many results related to the algebraic classification of small-dimensional algebras in different varieties of
associative and non-associative algebras.
 Deformations and geometric properties of a variety of algebras defined by a family of polynomial identities have been an object of study since the 1970's  
 $\big($see \cite{ben,BC99,GRH, GRH3,ikm, kz, FKS,FKS25}  and references in \cite{k23,l24,MS}$\big)$. 
 Burde and Steinhoff constructed the graphs of degenerations for the varieties of    $3$-dimensional and $4$-dimensional Lie algebras~\cite{BC99}. 
 Grunewald and O'Halloran studied the degenerations for the variety of $5$-dimensional nilpotent Lie algebras~\cite{GRH}.

One of the most important generalizations of associative algebras is the class of alternative algebras.
They are defined by the following identities 
\begin{equation}    \label{ident}
    \big(x,y,z\big) \ = \ - \ \big(x,z,y\big)\mbox{ \ and \ }\big(x,y,z\big) \ = \ \big(y,z,x\big),
\end{equation}
where $\big(x,y,z\big)=(xy)z-x(yz).$
The main example of (non-associative) alternative algebras is the octonion algebra discovered in  the XIX century.
The variety of alternative algebras is under certain consideration now 
$\big($see, for example, \cite{C24,C25,G24,AA,hnt} and reference therein$\big)$.
The variety of algebras defined as the first identity from \eqref{ident} is called right alternative algebras. 
Albert proved that every semisimple right alternative algebra of characteristic not two is alternative \cite{alb2} and 
after that, 
Thedy showed that a finite dimensional right alternative algebra without a nil ideal is alternative \cite{thedy77}.
But, unfortunately, it was proven that the variety of right alternative algebras does not admit the  Wedderburn principal theorem \cite{thedy78}.
In the infinite case we have a different result:
Mikheev constructed an example of an infinite-dimensional simple right alternative algebra that is not alternative over any field \cite{mikh} and 
it  was finally established that a simple right alternative algebra must be either alternative or nil \cite{skos1}.
Right alternative algebras with some additional identities were studied in papers of Isaev, Pchelintsev and others \cite{isaev}.
For example, Isaev gave the negative answer for the Specht problem in the variety of right alternative algebras \cite{isaev}.
Some combinatorial properties of the variety of right alternative algebras were studied by Umirbaev in \cite{ualbay85}.
Right alternative unital bimodules over the matrix algebras of order $\geq 3$ and over Cayley algebra were studied by 
Murakami,  Pchelintsev,  Shashkov, and Shestakov in \cite{MPS,PSS}.  
On the other side, there is an  interest in the study of 
nilpotent and solvable right alternative algebras 
$\big($see, for example, \cite{skos3,ser76,ser13}$\big)$.
So, Pchelintsev proved an analog of the well-known Zhevlakov theorem: any right alternative Malcev-admissible  nil algebra of bounded index over a field of characteristic zero is solvable \cite{sergey}.
The study of super-generalizations of principal varieties of non-associative algebras is under a certain interest now 
$\big($see \cite{ahk,k23,KM,   B25, BBE, DES, BM, BK, 1,2,3,4,5} and references therein$\big)$.
The study of simple right alternative superalgebras has a very big progress in recent papers of Pchelintsev and Shashkov (see,
for example, see their survey \cite{seroleg} and the reference therein).

Gainov proved that each $3$-dimensional alternative algebra is associative \cite{G62}; 
the classification of all  
$3$-dimensional right alternative  and all $4$-dimensional nilpotent right alternative algebras were given in  \cite{akl,ikm}. 
The aim of our present work is to classify all $3$-dimensional right alternative superalgebras (Theorems A1 and A2) and their principal subvarieties. 
Based on the received result, we will construct the geometric classification of $3$-dimensional right alternative superalgebras  (Theorems G1 and G2).

\section{The algebraic classification of superalgebras}

\begin{definition}
A superalgebra $\rm{A}$ over
a field $\mathbb{C}$ with a ${\mathbb Z}_{2}$-grading $\rm{A=A}_{0}\rm{\oplus A}_{1}$ is called right alternative if its multiplication satisfies
the right alternative surep-identity: 
\begin{equation*}
\left( x,y,z\right) =-\left( -1\right) ^{\left\vert y\right\vert \left\vert
z\right\vert }\left( x,z,y\right),
\end{equation*}%
\noindent
for all homogeneous elements $x,y,z$ in $\rm{A}$, where $\left\vert
x\right\vert $ denotes the degree of $x$ $(0$ for even elements, $1$ for odd
elements$)$.
\end{definition}

\subsection{Preliminaries: the method for algebraic classification}
In this section, we establish a straightforward method to obtain the
algebraic classification of the right alternative superalgebra structures of
type $\left( n,m\right) $ defined over an arbitrary Jordan superalgebra of
type $\left( n,m\right) $.

The  Jordan superproduct and supercommutator are defined as follows:

\begin{longtable}{lcl}
$x\bullet y $&$=$&$\frac{1}{2} \big(xy+\left( -1\right) ^{\left\vert x\right\vert \left\vert
y\right\vert }yx \big),$ \\
$\left[ x,y\right] $&$=$&$\frac{1}{2} \big(xy-\left( -1\right) ^{\left\vert x\right\vert
\left\vert y\right\vert }yx \big).$
\end{longtable}
The superalgebra $\left( \rm{A},\bullet \right) $ is called the
symmetrized superalgebra of $\rm{A}$ and is denoted by $\rm{A}^{+}.$ 
If $\rm{A}$ is a right alternative superalgebra, then $\rm{A}^{+}$
is a Jordan superalgebra.

Let $\mathbb{V=V}_{0}\mathbb{\oplus V}_{1}$ be a ${\mathbb Z}_{2}$-graded vector
space over a field $\mathbb{C}$. A bilinear map%
\begin{equation*}
\theta :\mathbb{V}\times \mathbb{V\to V}
\end{equation*}%
is called supercommutative if for all homogeneous elements $x,y\in \mathbb{V}$, the following graded commutativity condition holds:%
\begin{equation*}
\theta \left( x,y\right) =\left( -1\right) ^{\left\vert x\right\vert
\left\vert y\right\vert }\theta \left(y,x\right) .
\end{equation*}%
Moreover, $\theta $ is called super-skew symmetry if for all homogeneous
elements $x,y\in \mathbb{V}$, the following graded commutativity condition
holds:%
\begin{equation*}
\theta \left( x,y\right) =-\left( -1\right) ^{\left\vert x\right\vert
\left\vert y\right\vert }\theta \left( y,x\right) .
\end{equation*}

\begin{definition}
Let $\left( \rm{A},\bullet \right) $ be a Jordan superalgebra of type $\left( n,m\right) $. Define ${\rm Z}^{2}\left( \rm{A},\rm{A}\right) $ to
be the set of all super-skew-symmetric bilinear maps $\theta :\rm{A}%
\times \rm{A}\to \rm{A}$ such that:
\[
\begin{aligned}
& (x \bullet y) \bullet z \;+\; \theta(x \bullet y, z) \;+\; \theta(x, y) \bullet z \;+\; \theta(\theta(x, y), z) \\
& \quad + (-1)^{|y||z|} \Big( (x \bullet z) \bullet y \;+\; \theta(x \bullet z, y) \;+\; \theta(x, z) \bullet y \;+\; \theta(\theta(x, z), y) \Big) \\
&= x \bullet (y \bullet z) \;+\; \theta(x, y \bullet z) \;+\; x \bullet \theta(y, z) \;+\; \theta(x, \theta(y, z)) \\
& \quad + (-1)^{|y||z|} \Big( x \bullet (z \bullet y) \;+\; \theta(x, z \bullet y) \;+\; x \bullet \theta(z, y) \;+\; \theta(x, \theta(z, y)) \Big) .
\end{aligned}
\]
\noindent{}for all $x,y,z  \in \rm{A}_0 \cup \rm{A}_1$.
\end{definition}

Observe that, for $\theta \in {\rm Z}^{2}\left( \rm{A},\rm{A}\right) $,
if we define a multiplication $\ast _{\theta }$ on $\rm{A}$ by 
\begin{center}
    $x\ast_{\theta }y\ =\ x\bullet y+\theta \left( x,y\right) $ for all $x,y$ in $\rm{A}$,
\end{center}
then $\left( \rm{A},\ast _{\theta }\right) $ is a right
alternative superalgebra of type $\left( n,m\right) $. Conversely, if $%
\left( \rm{A},\cdot \right) $ is a right alternative superalgebra of
type $\left( n,m\right) $, then there exists $\theta \in {\rm Z}^{2}\left( 
\rm{A},\rm{A}\right) $ such that $\left( \rm{A},\cdot,\right) \cong \left( \rm{A},\ast _{\theta }\right) $. To see this,
consider the super-skew symmetric bilinear map $\theta :\rm{A}\times \rm{A}\to \rm{A}$ defined by $\theta \left( x,y\right)
=\left[ x,y\right] $ for all $x,y$ in $\rm{A}$. Then $\theta \in
{\rm Z}^{2}\left( \rm{A},\rm{A}\right) $ and $\left( \rm{A},\cdot
,\right) \cong \left( \rm{A},\ast _{\theta }\right) $.

\medskip

Now, let $\left( \rm{A},\cdot \right) $ be a right alternative
superalgebra and $\rm{Aut}\left( \rm{A}\right) $ be the automorphism
group of $\rm{A}$. Then $\rm{Aut}\left( \rm{A}\right) $ acts on $%
{\rm Z}^{2}\left( \rm{A},\rm{A}\right) $ by 
\begin{equation*}
\left( \theta \ast \phi \right) \left( x,y\right) =\phi ^{-1}\left( \theta
\left( \phi \left( x\right) ,\phi \left( y\right) \right) \right) ,
\end{equation*}%
for $\phi \in \text{Aut}\left( \rm{A}\right) $, and $\theta \in {\rm Z}^{2}\left( \rm{A},\rm{A}\right) $.

\begin{lemma}
Let $\left( \rm{A},\cdot \right) $ be a Jordan superalgebra and $\theta
,\vartheta \in {\rm Z}^{2}\left( \rm{A},\rm{A}\right) $. Then $\left( 
\rm{A},\ast _{\theta }\right) $ and $\left( \rm{A},\ast
_{\vartheta }\right) $ are isomorphic if and only if there is a $\phi \in 
\rm{Aut}\left( \rm{A}\right) $ with $\theta \ast \phi =\vartheta $.
\end{lemma}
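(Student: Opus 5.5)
The plan is to prove both directions by unwinding the definitions. The one structural fact needed is that the symmetric part of $\ast_\theta$ is exactly $\bullet$. Here $\mathrm{Aut}(\mathrm{A})$ means the group of even (grading-preserving) automorphisms of the Jordan superalgebra $(\mathrm{A},\bullet)$, and isomorphisms of superalgebras are taken to be even linear bijections.

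For any $\theta\in{\rm Z}^2(\mathrm{A},\mathrm{A})$, super-skew-symmetry of $\theta$ and supercommutativity of $\bullet$ give
\[
\tfrac12\big(x\ast_\theta y+(-1)^{|x||y|}y\ast_\theta x\big)=x\bullet y
\]
and
\[
\tfrac12\big(x\ast_\theta y-(-1)^{|x||y|}y\ast_\theta x\big)=\theta(x,y).
\]
So $(\mathrm{A},\ast_\theta)^+=(\mathrm{A},\bullet)$, and $\theta$ is the supercommutator of $\ast_\theta$.

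Suppose first that $\phi\in\mathrm{Aut}(\mathrm{A})$ satisfies $\theta\ast\phi=\vartheta$. Then $\phi(\vartheta(x,y))=\theta(\phi(x),\phi(y))$ and $\phi(x\bullet y)=\phi(x)\bullet\phi(y)$. Adding these,
\[
\phi(x\ast_\vartheta y)=\phi(x)\ast_\theta\phi(y).
\]
Since $\phi$ is even and bijective, it is an isomorphism $(\mathrm{A},\ast_\vartheta)\to(\mathrm{A},\ast_\theta)$.

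Conversely, let $\phi:(\mathrm{A},\ast_\vartheta)\to(\mathrm{A},\ast_\theta)$ be an even isomorphism, so $\phi(x\ast_\vartheta y)=\phi(x)\ast_\theta\phi(y)$ for homogeneous $x,y$. Because $\phi$ preserves degrees, the sign $(-1)^{|x||y|}$ equals $(-1)^{|\phi(x)||\phi(y)|}$. Applying the identity to $(x,y)$ and to $(y,x)$, and taking the graded symmetric and skew combinations, yields two equalities:
\[
\phi(x\bullet y)=\phi(x)\bullet\phi(y),\qquad
\phi(\vartheta(x,y))=\theta(\phi(x),\phi(y)).
\]
The first says $\phi\in\mathrm{Aut}(\mathrm{A},\bullet)$. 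The second, after applying $\phi^{-1}$, says $\theta\ast\phi=\vartheta$. Both identities extend from homogeneous elements to all of $\mathrm{A}$ by bilinearity.

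The main obstacle is the convention for isomorphism in this second direction. The decomposition into symmetric and skew parts only commutes with $\phi$ when $\phi$ is even, because the sign depends on degrees. I will state explicitly that superalgebra isomorphisms are even. If the direction of $\phi$ in the action needs adjusting, replace $\phi$ by $\phi^{-1}$, which is again an automorphism of $(\mathrm{A},\bullet)$.
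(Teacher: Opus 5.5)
Your argument is correct. You recover $\bullet$ and $\theta$ as the graded symmetric and skew parts of $\ast_\theta$, which uses only super-skew-symmetry and division by $2$, and then observe that an even bijection respects $\ast_\vartheta\to\ast_\theta$ exactly when it respects both parts; this proves both directions, with $\phi$ oriented correctly for $\theta\ast\phi=\vartheta$.

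The paper states this lemma without proof. Your verification is the standard argument implicit in its remark that $\theta(x,y)=[x,y]$ together with ${\rm A}^{+}$ recovers the right alternative structure. Your stated convention that isomorphisms and automorphisms are even also matches the block-diagonal automorphism groups the paper uses throughout.
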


Hence, we have a procedure to classify the right alternative superalgebras
of type $\left( n,m\right) $ associated with a given Jordan superalgebra $%
\left( \rm{A},\bullet \right) $ of type $\left( n,m\right) $. It
consists of three steps:

\begin{enumerate}
\item Compute ${\rm Z}^{2}\left( \rm{A},\rm{A}\right) $.

\item Find the orbits of $\rm{Aut}\left( \rm{A}\right) $ on $%
{\rm Z}^{2}\left( \rm{A},\rm{A}\right) $.

\item Choose a representative $\theta $ from each orbit and then construct
the right alternative superalgebra $\left( \rm{A},\ast _{\theta
}\right) $.
\end{enumerate}

\bigskip

Let us introduce the following notations. Let 
\begin{center}
    $e_{1},\ldots
,e_{n},e_{n+1}=f_{1},\ldots,e_{n+m}=f_{m},$
\end{center} be a fixed basis of a Jordan
superalgebra $\left( \rm{A},\bullet \right) $ of type $\left( n,m\right) $.
Define $\mathrm{\Lambda }^{2}\left( \rm{A},\mathbb{C}\right) $ to be
the space of all super skew symmetric bilinear forms on $\rm{A}$. Then 
\begin{center}$%
\mathrm{\Lambda }^{2}\left( \rm{A},\mathbb{C}\right) =\left\langle
\Delta _{i,j}:1\leq i<j\leq n+m\right\rangle \oplus \left\langle \Delta
_{i,i}:n<i\leq n+m\right\rangle,$
\end{center} where $\Delta _{i,j}$ is the super skew-symmetric bilinear form $\Delta _{i,j}:\rm{A}\times \rm{A}%
\to \mathbb{C}$\ defined by%
\begin{equation*}
\Delta _{i,j}\left( e_{l},e_{m}\right) :=\left\{ 
\begin{tabular}{ll}
$1,$ & if $\left( i,j\right) =\left( l,m\right) $ and $i\neq j,$ \\ 
$-\left( -1\right) ^{\left\vert e_{l}\right\vert \left\vert e_{m}\right\vert
},$ & if $\left( i,j\right) =\left( m,l\right) $ and $i\neq j,$ \\ 
$1,$ & if $i=l=m$ and $i=j>n,$ \\ 
$0,$ & otherwise.%
\end{tabular}%
\right. 
\end{equation*}%
 Now, if $\theta \in {\rm Z}^{2}\left( \rm{A},\rm{A}\right) $, then $%
\theta $ can be uniquely written as $\theta \left( x,y\right) =\underset{i=1}%
{\overset{n}{\sum }}B_{i}\left( x,y\right) e_{i}$ where $B_{1},B_{2},\ldots
,B_{n}$ is a sequence of super-skew-symmetric bilinear forms on $\rm{A}$%
. Also, we may write $\theta =\left( B_{1},B_{2},\ldots ,B_{n}\right).$ 
Let $\phi ^{-1}\in \text{Aut}\left( \rm{A}\right) $ be given by the
matrix $\left( b_{ij}\right) $. If 
\begin{center}
    $\left( \theta \ast \phi \right) \left(
x,y\right) =\underset{i=1}{\overset{n}{\sum }}B_{i}^{\prime }\left(
x,y\right) e_{i}$, then $B_{i}^{\prime }=\underset{j=1}{\overset{n}{\sum }}%
b_{ij}\phi ^{t}B_{j}\phi $.

\end{center}

 \newpage
\subsection{Right alternative superalgebras of type $(1,2)$}

\begin{proposition}
Let ${\rm J}$ be a $3$-dimensional Jordan superalgebra of type $\left(
1,2\right) $. Then ${\rm J}$ is isomorphic to one of the following
superalgebras:

\begin{longtable}{c c l l l l}
 
    \hline
    ${\rm J}_{01}$ & $:$ &  & $e_{1}\cdot f_{1}=f_{2}$ &  & $f_{1}\cdot f_{2}=e_{1}$ \\
    ${\rm J}_{02}$ & $:$ &  &  &  & $f_{1}\cdot f_{2}=e_{1}$ \\
    ${\rm J}_{03}$ & $:$ &  & $e_{1}\cdot f_{1}=f_{2}$ &  &  \\
    ${\rm J}_{04}$ & $:$ & $e_{1}\cdot e_{1}=e_{1}$ & $e_{1}\cdot f_{1}=\frac{1}{2} f_{1}$ &  &  \\
    ${\rm J}_{05}$ & $:$ & $e_{1}\cdot e_{1}=e_{1}$ & $e_{1}\cdot f_{1}=f_{1}$ & $e_{1}\cdot f_{2}=\frac{1}{2} f_{2}$ &  \\
    ${\rm J}_{06}$ & $:$ & $e_{1}\cdot e_{1}=e_{1}$ & $e_{1}\cdot f_{1}=f_{1}$ &  &  \\
    ${\rm J}_{07}$ & $:$ & $e_{1}\cdot e_{1}=e_{1}$ &  &  &  \\
    ${\rm J}_{08}$ & $:$ & $e_{1}\cdot e_{1}=e_{1}$ & $e_{1}\cdot f_{1}=\frac{1}{2} f_{1}$ & $e_{1}\cdot f_{2}=\frac{1}{2} f_{2}$ &  \\
    ${\rm J}_{09}$ & $:$ & $e_{1}\cdot e_{1}=e_{1}$ & $e_{1}\cdot f_{1}=f_{1}$ & $e_{1}\cdot f_{2}=f_{2}$ &  \\
    ${\rm J}_{10}$ & $:$ & $e_{1}\cdot e_{1}=e_{1}$ & $e_{1}\cdot f_{1}=\frac{1}{2} f_{1}$ & $e_{1}\cdot f_{2}=\frac{1}{2} f_{2}$ & $f_{1}\cdot f_{2}=e_{1}$ \\
    ${\rm J}_{11}$ & $:$ & $e_{1}\cdot e_{1}=e_{1}$ & $e_{1}\cdot f_{1}=f_{1}$ & $e_{1}\cdot f_{2}=f_{2}$ & $f_{1}\cdot f_{2}=e_{1}$ \\
    ${\rm J}_{12}$ & $:$ & \text{trivial} \\
    \hline
\end{longtable}

\end{proposition}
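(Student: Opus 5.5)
We classify by hand, using the structure of a Jordan superalgebra $\mathrm{J}=\mathrm{J}_0\oplus \mathrm{J}_1$ with $\mathrm{J}_0=\langle e_1\rangle$ and $\mathrm{J}_1=\langle f_1,f_2\rangle$. By parity the product is determined by three pieces of data. First, the scalar $e_1\cdot e_1=\alpha e_1$. Second, the linear operator $L=L_{e_1}|_{\mathrm{J}_1}$, a $2\times 2$ matrix. Third, the odd product $f_i\cdot f_j$. Supercommutativity makes this product skew-symmetric on $\mathrm{J}_1$, so $f_1\cdot f_1=f_2\cdot f_2=0$ and $f_1\cdot f_2=\beta e_1=-f_2\cdot f_1$. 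The super Jordan identity must then be imposed on homogeneous triples (equivalently, the linearized super Jordan identity on quadruples). Symmetries of the basis are $\mathrm{GL}_1\times \mathrm{GL}_2$ acting on $\langle e_1\rangle$ and $\langle f_1,f_2\rangle$. Under this action $\alpha$ rescales, $L$ is conjugated, and $\beta$ transforms by the determinant.

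\textbf{Case $\alpha\neq 0$.} Normalize to $\alpha=1$, so $e_1$ is an idempotent. The even–even–odd component of the super Jordan identity reduces to the classical Jordan identity $(e^2 x)e=e^2(xe)$ together with the standard idempotent relation $2L^3-3L^2+L=0$. These force $L$ to be diagonalizable with eigenvalues in $\{0,\tfrac12,1\}$ (the Peirce decomposition). Up to conjugation and the ordering of $f_1,f_2$, this gives six choices of eigenvalue multiset for $L$.

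Next we impose the odd relation $f_1\cdot f_2=\beta e_1$, with the identities involving $(e_1,f_1,f_2)$. Peirce rules give $\mathrm{J}_{i}\cdot \mathrm{J}_{j}$ constraints, for instance $\mathrm{J}_1(e)\cdot \mathrm{J}_0(e)=0$ and $\mathrm{J}_{1/2}\cdot \mathrm{J}_{1/2}\subseteq \mathrm{J}_0+\mathrm{J}_1$. The actual computation I would do is to plug $x=e_1$, $y=f_1$, $z=f_2$ into the linearized identity directly. Writing $L=\mathrm{diag}(\lambda_1,\lambda_2)$, I expect $\beta\neq 0$ to force $\lambda_1=\lambda_2\in\{\tfrac12,1\}$. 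When $\beta\neq 0$ we rescale to $\beta=1$, which yields $\mathrm{J}_{10}$ and $\mathrm{J}_{11}$. When $\beta=0$ we get the six Peirce types $\mathrm{J}_{04}$–$\mathrm{J}_{09}$, up to reordering of $f_1,f_2$.

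\textbf{Case $\alpha=0$.} Here $L_{e_1}$ on $\mathrm{J}_1$ should be nilpotent. Take $x=y=e_1$ and $z=f$ in the Jordan identity, which gives roughly $e_1^2\cdot(fe_1)=(e_1^2 f)e_1$ vacuously. So instead use the linearization with $(e_1,e_1,e_1,f)$ to obtain $L^3=0$ up to scalars and deduce that $L^2=0$ in dimension $2$. I would check which linearization yields this, and I expect it to be the main computational obstacle. If some nonnilpotent $L$ survived, it would have to be excluded by a direct check.

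So $L=0$ or $L$ is conjugate to the Jordan block $e_1 f_1=f_2$. Then $\beta$ is restricted by identities on $(e_1,f_1,f_2)$. I expect the result to be compatible: if $L\neq 0$, the identity forces $f_1 f_2$ to interact with $e_1 f_2=0$, and $\beta$ is free. The remaining freedom, with $\mathrm{GL}$ elements preserving the Jordan block, rescales $\beta$ to $1$. This gives $\mathrm{J}_{01}$, $\mathrm{J}_{02}$, $\mathrm{J}_{03}$, and $\mathrm{J}_{12}$.

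Finally, we check that the listed superalgebras are pairwise nonisomorphic via invariants. These are: whether $e_1^2=0$; the spectrum of $L$ (an isomorphism invariant once $e_1$ is normalized as the unique nonzero idempotent of $\mathrm{J}_0$); and whether $\mathrm{J}_1\cdot \mathrm{J}_1=0$.
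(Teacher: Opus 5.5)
Your argument is correct for what the proposition asserts, but it takes a different route from the paper. The paper states this proposition without proof and imports the classification of $3$-dimensional Jordan superalgebras from the literature. You instead derive the list directly from two standard facts.

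\begin{itemize}
\item The first is the Peirce decomposition relative to the idempotent $e_1$, which is valid for superalgebras, e.g.\ via the Grassmann envelope.
\item The second is the operator identity $L_{x^{3}}=3L_{x^{2}}L_{x}-2L_{x}^{3}$. Taking $x=e_1$ and letting it act on $\mathrm{J}_1$ involves only one odd variable, so no signs appear.
\end{itemize}

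For $\alpha\neq 0$ the identity gives $2L^{3}-3L^{2}+L=0$ and hence the six diagonal types. The rules $\mathrm{J}_{1}(e_1)\mathrm{J}_{0}(e_1)=0$, $\mathrm{J}_{0}(e_1)\mathrm{J}_{0}(e_1)\subseteq \mathrm{J}_{0}(e_1)$ and $\mathrm{J}_{i}(e_1)\mathrm{J}_{1/2}(e_1)\subseteq \mathrm{J}_{1/2}(e_1)$ then force $\beta=0$ unless $\lambda_1=\lambda_2\in\{\frac12,1\}$.

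For $\alpha=0$ the same identity gives $2L^{3}=0$ at once. This is exactly the linearization you were unsure about, so $L^{2}=0$ and no non-nilpotent case needs a separate check.

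Your hedge that ``$\beta$ is free'' when $L\neq 0$ is not needed in this direction. Any constraint from the identity could only remove cases. Moreover, the substitution $e_1\mapsto ae_1$, $f_1\mapsto bf_1+cf_2$, $f_2\mapsto abf_2$ sends $\beta$ to $b^{2}\beta$, which lands you in $\mathrm{J}_{01}$ or $\mathrm{J}_{03}$.

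What the citation gives the paper beyond your sketch is the converse and the irredundancy of the list:
\begin{itemize}
\item $\mathrm{J}_{01}$, $\mathrm{J}_{10}$ and $\mathrm{J}_{11}$ really are Jordan. $\mathrm{J}_{10}$ is the Kaplansky superalgebra $K_3$, $\mathrm{J}_{11}$ is the superalgebra $\mathbb{C}1\oplus V_1$ of a skew form, and $\mathrm{J}_{01}$ can be checked directly in the Grassmann envelope.
\item The twelve superalgebras are pairwise non-isomorphic.
\end{itemize}

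On that last point your invariants are not enough. $\mathrm{J}_{01}$, $\mathrm{J}_{02}$, $\mathrm{J}_{03}$ and $\mathrm{J}_{12}$ all have $e_1^{2}=0$ and nilpotent $L$ with spectrum $\{0,0\}$. So the spectrum together with whether $\mathrm{J}_1\cdot\mathrm{J}_1=0$ does not separate $\mathrm{J}_{01}$ from $\mathrm{J}_{02}$, nor $\mathrm{J}_{03}$ from $\mathrm{J}_{12}$. You also need whether $\mathrm{J}_0\cdot\mathrm{J}_1=0$, i.e.\ the rank of $L$.

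Relatedly, rescaling $e_1\mapsto ae_1$ multiplies $L$ by $a$ rather than merely conjugating it. That is why the spectrum is an invariant only after normalizing $\alpha=1$.

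Neither point affects the statement as formulated, which claims only that every such $\mathrm{J}$ is isomorphic to a listed one.
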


\subsubsection{The classification Theorem A1}
\begin{theoremA1}
Let ${\rm R}$ be a nontrivial $3$-dimensional right alternative
superalgebra of type $\left( 1,2\right) $. Then ${\rm R}$ is isomorphic
to one of the following superalgebras:

\begin{longtable}{llllllllllllllllllll}
${\rm R}_{01}^{\alpha \neq -1}$&$:$&
$e_{1}f_{1}=\left( 1+\alpha \right)f_{2}$&$f_{1}e_{1}=\left( 1-\alpha \right) f_{2}$&$f_{1}f_{2}=\allowbreak 2\frac{\alpha -1}{\alpha +1}e_{1}$&$f_{2}f_{1}=-\frac{4}{\alpha +1}e_{1}$\\

${\rm R}_{02}$&$:$&$e_{1}f_{1}=4f_{2}$&$f_{1}e_{1}=-2f_{2}$&$f_{1}f_{1}=e_{1}$\\&&$f_{1}f_{2}=e_{1}$&$f_{2}f_{1}=-e_{1}$\\

${\rm R}_{03}$&$:$&$e_{1}f_{1}=f_{2}$&$f_{1}e_{1}=-f_{2}$&$f_{1}f_{2}=2e_{1}$\\

${\rm R}_{04}^{\alpha }$&$:$&$ f_{1}f_{2}=\left( \alpha +1\right)e_{1}$&$f_{2}f_{1}=\left( \alpha -1\right) e_{1}$\\

${\rm R}_{05}$&$:$&$ f_{1}f_{2}=e_{1}$&$f_{2}f_{1}=-e_{1}$&$f_{2}f_{2}=-e_{1}$\\

${\rm R}_{06}^{\alpha }$&$:$&$ e_{1}f_{1}=\left( 1+\alpha \right)f_{2}$&$f_{1}e_{1}=\left( 1-\alpha \right) f_{2}$\\

${\rm R}_{07}^{\alpha }$&$:$&$ e_{1}f_{1}=\left( 1+\alpha \right)f_{2}$&$f_{1}e_{1}=\left( 1-\alpha \right) f_{2}$&$f_{1}f_{1}=e_{1}$\\

${\rm R}_{08}$&$:$&$ f_{1}e_{1}=2f_{2}$&$f_{1}f_{2}=e_{1}$&$f_{2}f_{1}=e_{1}$\\

${\rm R}_{09}$&$:$&$ e_{1}e_{1}=e_{1}$&$f_{1}e_{1}=f_{1}$\\

${\rm R}_{10}$&$:$&$ e_{1}e_{1}=e_{1}$&$e_{1}f_{1}=f_{2}$&$f_{1}e_{1}=f_{1}-f_{2}$\\

${\rm R}_{11}$&$:$&$ e_{1}e_{1}=e_{1}$&$e_{1}f_{1}=f_{1}$\\

${\rm R}_{12}$&$:$&$ e_{1}e_{1}=e_{1}$&$e_{1}f_{1}=f_{1}$&$f_{1}f_{1}=e_{1}$\\

${\rm R}_{13}$&$:$&$e_{1}e_{1}=e_{1}$&$e_{1}f_{1}=f_{1}$&$e_{1}f_{2}=f_{2}$&$f_{1}e_{1}=f_{1}$\\

${\rm R}_{14}$&$:$&$e_{1}e_{1}=e_{1}$&$e_{1}f_{1}=f_{1}$&$e_{1}f_{2}=f_{1}+f_{2}$\\&
&$f_{1}e_{1}=f_{1}$&$f_{2}e_{1}=-f_{1}$\\

${\rm R}_{15}$&$:$&$e_{1}e_{1}=e_{1}$&$e_{1}f_{1}=f_{1}$&$f_{1}e_{1}=f_{1}$&$f_{2}e_{1}=f_{2}$\\

${\rm R}_{16}$&$:$&$e_{1}e_{1}=e_{1}$&$e_{1}f_{1}=f_{1}$&$f_{1}e_{1}=f_{1}$\\
&&$f_{1}f_{1}=e_{1}$&$f_{2}e_{1}=f_{2}$\\

${\rm R}_{17}$&$:$&$e_{1}e_{1}=e_{1}$&$e_{1}f_{1}=f_{1}$&$f_{1}e_{1}=f_{1}$\\

${\rm R}_{18}$&$:$&$e_{1}e_{1}=e_{1}$&$e_{1}f_{1}=f_{1}$&$f_{1}e_{1}=f_{1}$&$f_{1}f_{1}=e_{1}$\\

${\rm R}_{19}$&$:$&$e_{1}e_{1}=e_{1}$\\

${\rm R}_{20}$&$:$&$e_{1}e_{1}=e_{1}$&$e_{1}f_{1}=f_{1}$&$f_{2}e_{1}=f_{2}$\\

${\rm R}_{21}$&$:$&$e_{1}e_{1}=e_{1}$&$e_{1}f_{1}=f_{1}$&$f_{2}e_{1}=f_{2}$&$f_{1}f_{1}=e_{1}$\\

${\rm R}_{22}$&$:$&$e_{1}e_{1}=e_{1}$&$f_{1}e_{1}=f_{1}$&$f_{2}e_{1}=f_{2}$\\

${\rm R}_{23}$&$:$&$e_{1}e_{1}=e_{1}$&$e_{1}f_{1}=f_{1}$&$e_{1}f_{2}=f_{2}$\\

${\rm R}_{24}$&$:$&
$e_{1}e_{1}=e_{1}$&$e_{1}f_{1}=f_{1}$&$f_{1}e_{1}=f_{1}$&$e_{1}f_{2}=f_{2}$&$f_{2}e_{1}=f_{2}$\\

${\rm R}_{25}$&$:$&$f_{1}f_{2}=e_{1}$&$f_{2}f_{1}=e_{1}$\\

${\rm R}_{26}$&$:$&$f_{1}f_{1}=e_{1}$\\

${\rm R}_{27}$&$:$&$e_{1}f_{1}=f_{2}$&$f_{1}e_{1}=-f_{2}$\\

${\rm R}_{28}$&$:$&$e_{1}f_{1}=f_{2}$&$f_{1}e_{1}=-f_{2}$&$f_{1}f_{1}=e_{1}$
\end{longtable}
\noindent{}All listed superalgebras are non-isomorphic except: ${\rm R}_{04}^{\alpha
}\cong {\rm R}_{04}^{-\alpha }$.
\end{theoremA1}

\subsubsection{The proof of  Theorem A1}
Let ${\rm R}$ be a nontrivial $3$-dimensional right alternative
superalgebra of type $\left( 1,2\right) $. Then ${\rm R}^{+}$ is a $3$%
-dimensional Jordan superalgebra of type $\left( 1,2\right) $. Then we may
assume ${\rm R}^{+}\in \left\{ {\rm J}_{01},\ldots ,{\rm J}%
_{12}\right\} $. So we have the following cases:

\begin{enumerate}[I.]
    \item 
\underline{${\rm R}^{+}={\rm J}_{01}$}. Let $\theta =\big({\rm B}_{1},\ {\rm B}_{2},\ {\rm B}_{3}\big) $ be an arbitrary element of ${\rm Z}^{2}\left( \rm{J}_{01},{\rm J}_{01}\right) $. Then \begin{center}
$\theta\ =\ \left( \alpha _{1}\Delta _{22}+
\frac{\alpha _{2}-3}{\alpha _{2}+1}\Delta _{23},\ 0,\ \alpha _{2}\Delta
_{12}\right),$
\end{center} for some $\alpha _{1},\alpha _{2}\in \mathbb{C}$ with $\alpha
_{2}\neq -1$. The automorphism group  ${\rm Aut}\left( 
{\rm J}_{01}\right) $ consists of the automorphisms $\phi $ given by a
matrix of the following form:
\begin{equation*}
\phi =%
\begin{pmatrix}
a_{11} & 0 & 0 \\ 
0 & \epsilon & 0 \\ 
0 & a_{32} & \epsilon a_{11}%
\end{pmatrix}%
:\epsilon ^{2}=1.
\end{equation*}

\begin{itemize}
\item $\alpha _{2}\neq 3$. We choose to be the following automorphism:%
\begin{equation*}
\phi =%
\begin{pmatrix}
1 & 0 & 0 \\ 
0 & 1 & 0 \\ 
0 & \frac{\alpha _{1}\left( 1+\alpha _{2}\right) }{6-2\alpha _{2}} & 1%
\end{pmatrix}%
.
\end{equation*}%
Then $\theta \ast \phi \ =\ \left( \frac{\alpha _{2}-3}{\alpha _{2}+1}\Delta _{23},\ 0,\ \alpha
_{2}\Delta _{12}\right) $. So we get the superalgebras ${\rm R}_{01}^{\alpha
\notin \{ -1,3\}}$.

\item $\alpha _{2}=3$. If $\alpha _{1}=0$, we have 
$\theta =\big(0, \ 0,\ 3\Delta _{12}\big) $ and we obtain the superalgebra ${\rm R}_{01}^{3}$. If $\alpha _{1}\neq 0$, we choose $\phi $\ to be the
following automorphism:%
\begin{equation*}
\phi =%
\begin{pmatrix}
\alpha _{1} & 0 & 0 \\ 
0 & 1 & 0 \\ 
0 & 0 & \alpha _{1}%
\end{pmatrix}%
.
\end{equation*}%
Then $\theta \ast \phi =\big( \Delta _{22},\ 0,\ 3\Delta _{12}\big) $. Hence
we get the superalgebra ${\rm R}_{02}$.
\end{itemize}

    \item 
\underline{${\rm R}^{+}={\rm J}_{02}$}. Let $\theta =\big(
{\rm B}_{1},\ {\rm B}_{2},\ {\rm B}_{3}\big) $ be an arbitrary element of 
${\rm Z}^{2}\left( \rm{J}_{02},{\rm J}_{02}\right) $. Then $\theta \in \left\{ \eta _{1},\ldots
,\eta _{4}\right\} $ where%
\begin{longtable}{lcl}
$\eta _{1} $&$=$&$\big( \frac{\alpha _{2}\left( 1-\alpha _{3}\right) }{\alpha
_{1}}\Delta _{22}+\alpha _{3}\Delta _{23}-\frac{\alpha _{1}\left( 1+\alpha
_{3}\right) }{\alpha _{2}}\Delta _{33},\ 
\alpha _{1}\Delta _{12}-\frac{\alpha
_{1}^{2}}{\alpha _{2}}\Delta _{13},\ \alpha _{2}\Delta _{12}-\alpha _{1}\Delta
_{13}\big) ,$ \\
$\eta _{2} $&$=$&$\big( \alpha _{1}\Delta _{22}+\alpha _{2}\Delta _{33}+\alpha
_{3}\Delta _{23},\ 0,\ 0\big),$ \\
$\eta _{3} $&$=$&$\big( \alpha _{1}\Delta _{33}-\Delta _{23},\ \alpha _{2}\Delta
_{13},\ 0\big) ,$ \\
$\eta _{4} $&$=$&$\big( \alpha _{1}\Delta _{22}+\Delta _{23},\ 0,\ \alpha _{2}\Delta
_{12}\big) ,$
\end{longtable}%
for some $\alpha _{1},\alpha _{2},\alpha _{3}\in \mathbb{C}$. The
automorphism group ${\rm Aut}\left( {\rm J}_{02}\right) $ consists of the automorphisms $\phi $ given by a matrix of the following form:
\begin{equation*}
\phi =%
\begin{pmatrix}
a_{22}a_{33}-a_{23}a_{32} & 0 & 0 \\ 
0 & a_{22} & a_{23} \\ 
0 & a_{32} & a_{33}%
\end{pmatrix}%
.
\end{equation*}

\begin{itemize}
\item $\theta =\eta _{1}$. Let $\phi $ be the following automorphism:%
\begin{equation*}
\begin{pmatrix}
\frac{1}{\sqrt{\alpha _{2}}} & 0 & 0 \\ 
0 & \frac{\alpha _{3}+1}{2\sqrt{\alpha _{2}}} & \frac{\alpha
_{1}}{\alpha _{2}} \\ 
0 & \frac{ \sqrt{\alpha _{2}} (\alpha _{3}-1)}{2\alpha _{1}} & 1%
\end{pmatrix}%
.
\end{equation*}%
Then $\theta \ast \phi =\big( \Delta _{23},\ 0,\ \Delta _{12}\big) $. Thus we
get the superalgebra ${\rm R}_{03}$.

\item $\theta =\eta _{2}$. Let $\phi =\bigl(a_{ij}\bigr)\in $ ${\rm Aut}%
\left( {\rm J}_{02}\right) $. Then $\theta \ast \phi\ =\ \left( \beta
_{1}\Delta _{22}+\beta _{2}\Delta _{33}+\beta _{3}\Delta _{23},\ 0,\ 0\right),$
where%
\begin{longtable}{lcl}
$\beta _{1} $&$=$&$\frac{\alpha
_{1}a_{22}^{2}+2\alpha _{3}a_{22}a_{32}+\alpha _{2}a_{32}^{2}}{a_{22}a_{33}-a_{23}a_{32}} ,$ \\
$\beta _{2} $&$=$&$\frac{\alpha
_{1}a_{23}^{2}+2\alpha _{3}a_{23}a_{33}+\alpha _{2}a_{33}^{2}}{a_{22}a_{33}-a_{23}a_{32}} ,$ \\
$\beta _{3} $&$=$&$\frac{\alpha
_{1}a_{22}a_{23}+\alpha _{2}a_{32}a_{33}+\alpha _{3}a_{22}a_{33}+\alpha
_{3}a_{23}a_{32}}{a_{22}a_{33}-a_{23}a_{32}}.$
\end{longtable}%
Whence%
\begin{equation*}
\begin{pmatrix}
-\beta _{3} & -\beta _{2} \\ 
\beta _{1} & \beta _{3}%
\end{pmatrix} \ 
= \ 
\begin{pmatrix}
a_{22} & a_{23} \\ 
a_{32} & a_{33}%
\end{pmatrix}%
^{-1}%
\begin{pmatrix}
-\alpha _{3} & -\alpha _{2} \\ 
\alpha _{1} & \alpha _{3}%
\end{pmatrix}%
\begin{pmatrix}
a_{22} & a_{23} \\ 
a_{32} & a_{33}%
\end{pmatrix}%
.
\end{equation*}%
So we may assume $%
\begin{pmatrix}
-\alpha _{3} & -\alpha _{2} \\ 
\alpha _{1} & \alpha _{3}%
\end{pmatrix}%
\in \left\{ 
\begin{pmatrix}
-\alpha & 0 \\ 
0 & \alpha%
\end{pmatrix}%
,%
\begin{pmatrix}
0 & 1 \\ 
0 & 0%
\end{pmatrix}%
\right\} $. Therefore we obtain the superalgebras ${\rm R}_{04}^{\alpha }$ and 
${\rm R}_{05}$.

\item $\theta =\eta _{3}$. Let $\phi $\ be the following automorphism:%
\begin{equation*}
\phi =%
\begin{pmatrix}
\frac{\bf i}{\sqrt{\alpha _{2}}} & 0 & 0 \\ 
0 & \frac{\alpha _{1}{\bf i}}{2\sqrt{\alpha _{2}}} & -1 \\ 
0 & \frac{\bf  i}{\sqrt{\alpha _{2}}} & 0%
\end{pmatrix}%
.
\end{equation*}%
Then $\theta \ast \phi =\big( \Delta _{23},\ 0,\ \Delta _{12}\big) $. So we
have the superalgebra ${\rm R}_{03}$.

\item $\theta =\eta _{4}$. Let $\phi $\ be the following automorphism:%
\begin{equation*}
\phi =%
\begin{pmatrix}
\frac{1}{\sqrt{\alpha _{2}}} & 0 & 0 \\ 
0 & \frac{1}{\sqrt{\alpha _{2}}} & 0 \\ 
0 & - \frac{\alpha _{1}}{2\sqrt{\alpha _{2}}} & 1%
\end{pmatrix}%
.
\end{equation*}%
Then $\theta \ast \phi =\big( \Delta _{23},\ 0,\ \Delta _{12}\big) $. Hence
we have the superalgebra ${\rm R}_{03}$.
\end{itemize}

    \item 
\underline{${\rm R}^{+}={\rm J}_{03}$}. Let $\theta =\big({\rm B}_{1},\ {\rm B}_{2},\ {\rm B}_{3}\big) $ be an arbitrary element of ${\rm Z}^{2}\left( \rm{J}_{03},{\rm J}_{03}\right) $. Then $\theta \in \left\{ \eta _{1},\eta
_{2}\right\} $ where%
\begin{longtable}{lcl}
$\eta _{1} $&$=$&$\left( \alpha _{1}\Delta _{22}+\alpha _{2}\Delta
_{23},\ 0,\ -\Delta _{12}\right),$ \\
$\eta _{2} $&$=$&$\left( \alpha _{1}\Delta _{22},\ 0,\ \alpha _{2}\Delta _{12}\right)_{\alpha _{2}\neq -1},$
\end{longtable}%
for some $\alpha _{1},\alpha _{2}\in \mathbb{C}$. The automorphism group   ${\rm Aut}\left( {\rm J}_{03}\right) $ consists of the automorphisms $\phi $ given by a matrix of the following form:
\begin{equation*}
\phi =
\begin{pmatrix}
a_{11} & 0 & 0 \\ 
0 & a_{22} & 0 \\ 
0 & a_{32} & a_{11}a_{22}%
\end{pmatrix}
.
\end{equation*}

\begin{itemize}
\item $\theta =\eta _{1}$. Let $\phi =\bigl(a_{ij}\bigr)\in $ ${\rm Aut}%
\left( {\rm J}_{03}\right) $. Then $\theta \ast \phi \ =\ \left( \beta
_{1}\Delta _{22}+\beta _{2}\Delta _{23},\ 0,\ -\Delta _{12}\right),$ where 
\begin{longtable}{lcllcl}
$
\beta _{1}$&$=$&${a_{11}^{-1}}a_{22}\left( \alpha _{1}a_{22}+2\alpha
_{2}a_{32}\right),$ & $\beta _{2}$&$=$&$\alpha _{2}a_{22}^{2}.$
\end{longtable}So we have the
representatives 
\begin{center}
    $\big( 0,\ 0,\ -\Delta _{12}\big),$ \ 
    $\big( \Delta_{22},\ 0,\ -\Delta _{12}\big),$ \ 
    $\big( \Delta _{23},\ 0,\ -\Delta _{12}\big) $.
\end{center}So we get the superalgebras ${\rm R}_{06}^{-1},$\ ${\rm R}_{07}^{-1},$ and ${\rm R}_{08}$.

\item $\theta =\eta _{2}$. Let $\phi =\bigl(a_{ij}\bigr)\in $ ${\rm Aut}%
\left( {\rm J}_{03}\right) $. Then $\theta \ast \phi =\left( \beta
_{1}\Delta _{22},\ 0,\ \beta _{2}\Delta _{12}\right),$ 
where $\beta _{1}\ =\ 
\alpha _{1}{a_{11}^{-1}}a_{22}^{2}$ and $\beta _{2}\ =\ \alpha _{2}$. 
So we have the
representatives $
\big( 0,\ 0 ,\ \alpha \Delta _{12}\big)$ and  
$\big( \Delta _{22},\ 0,\ \alpha \Delta _{12}\big)$ for $\alpha \neq -1.$ 
So we get the
superalgebras ${\rm R}_{06}^{\alpha \neq -1}$ and ${\rm R}_{07}^{\alpha \neq -1}$.
\end{itemize}

    \item 
\underline{${\rm R}^{+}={\rm J}_{04}$}. Let $\theta =\big(
{\rm B}_{1},\ {\rm B}_{2},\ {\rm B}_{3}\big) $ be an arbitrary element of ${\rm Z}^{2}\left( \rm{J
}_{4},{\rm J}_{04}\right) $. Then $\theta \in \left\{ \eta _{1},\eta
_{2}\right\} $ where%
\begin{longtable}{lcllcl}
$\eta _{1} $&$=$&$\left( 0,\ -\frac{1}{2}\Delta _{12},\ \alpha _{1}\Delta
_{12}\right),$ & 
$\eta _{2} $&$=$&$\left( \alpha _{1}\Delta _{22},\ \frac{1}{2}\Delta _{12},\ 0\right),$
\end{longtable}
for some $\alpha _{1}\in \mathbb{C}$. The automorphism group  ${\rm Aut}\left( {\rm J}_{04}\right) $  consists of the automorphisms $\phi$ given by a matrix of the following form:
\begin{equation*}
\phi =
\begin{pmatrix}
1 & 0 & 0 \\ 
0 & a_{22} & 0 \\ 
0 & 0 & a_{33}%
\end{pmatrix}%
.
\end{equation*}

\begin{itemize}
\item $\theta =\eta _{1}$. Let $\phi =\bigl(a_{ij}\bigr)\in $ ${\rm Aut}%
\left( {\rm J}_{04}\right) $. Then $\theta \ast \phi \ =\ \left( 0,\ -\frac{1}{2%
}\Delta _{12},\ \alpha _{1}{a_{22}}{a_{33}^{-1}}\Delta _{12}\right) $. Hence
we have representatives 
$\left( 0,\ -\frac{1}{2}\Delta _{12},\ 0\right) $
and $\left( 0,\ -\frac{1}{2}\Delta _{12},\ \Delta _{12}\right) $. Therefore we
obtain the superalgebras ${\rm R}_{09}$ and ${\rm R}_{10}$.

\item $\theta =\eta _{2}$. Let $\phi =\bigl(a_{ij}\bigr)\in $ ${\rm Aut}%
\left( {\rm J}_{04}\right) $. Then $\theta \ast \phi \ =\ \left( \alpha
_{1}a_{22}^{2}\Delta _{22},\ \frac{1}{2}\Delta _{12},\ 0\right) $. Hence we have
the representatives $\left( 0,\ \frac{1}{2}\Delta _{12},\ 0\right) $ and 
$\left(\Delta _{22},\ \frac{1}{2}\Delta _{12},\ 0\right) $. 
So we get the superalgebras $%
{\rm R}_{11}$ and ${\rm R}_{12}$.
\end{itemize}

    \item 
\underline{${\rm R}^{+}={\rm J}_{05}$}. Let $\theta =\big(
{\rm B}_{1},\ {\rm B}_{2},\ {\rm B}_{3}\big) $ be an arbitrary element of ${\rm Z}^{2}\left( \rm{J
}_{05},{\rm J}_{05}\right) $. Then $\theta \in \left\{ \eta _{1},\eta
_{2}\right\},$ where%
\begin{longtable}{lcllcl}
$\eta _{1} $&$=$&$\left( 0,\ \alpha _{1}\Delta _{13},\ \frac{1}{2}\Delta _{13}\right),$ &
$\eta _{2} $&$=$&$\left( \alpha _{1}\Delta _{22},\ 0,\ -\frac{1}{2}\Delta_{13}\right),$
\end{longtable}
for some $\alpha _{1}\in \mathbb{C}$. The automorphism group  ${\rm Aut}\left( {\rm J}_{05}\right) $ consists of the automorphisms $\phi $ given by a matrix of the following form:
\begin{equation*}
\phi =%
\begin{pmatrix}
1 & 0 & 0 \\ 
0 & a_{22} & 0 \\ 
0 & 0 & a_{33}%
\end{pmatrix}%
.
\end{equation*}

\begin{itemize}
\item $\theta =\eta _{1}$. Let $\phi =\bigl(a_{ij}\bigr)\in $ ${\rm Aut}%
\left( {\rm J}_{05}\right) $. Then 
$\theta \ast \phi \ =\ \left( 0,\ {\alpha _{1}}{a_{22}^{-1}}a_{33}\Delta _{13},\ \frac{1}{2}\Delta _{13}\right).$
Hence we have representatives 
$\left( 0,\ 0,\ \frac{1}{2}\Delta _{13}\right)$ and $\left( 0,\ \Delta _{13},\ \frac{1}{2}\Delta _{13}\right) $. Thus we get the
algebras ${\rm R}_{13}$ and ${\rm R}_{14}$.

\item $\theta =\eta _{2}$. Let $\phi =\bigl(a_{ij}\bigr)\in $ ${\rm Aut}%
\left( {\rm J}_{05}\right) $. Then 
$\theta \ast \phi \ =\ \left( \alpha_{1}a_{22}^{2}\Delta _{22},\ 0,\ -\frac{1}{2}\Delta _{13}\right) $. Hence we
have the representatives $\left( 0,\ 0,\ -\frac{1}{2}\Delta _{13}\right)$ and  
$\left(\Delta _{22},\ 0,\ -\frac{1}{2}\Delta _{13}\right) $. So we get the superalgebras ${\rm R}_{15}$ and ${\rm R}_{16}$.
\end{itemize}

    \item 
\underline{${\rm R}^{+}={\rm J}_{06}$}. Let $\theta =\big(
{\rm B}_{1},\ {\rm B}_{2},\ {\rm B}_{3}\big) $ be an arbitrary element of ${\rm Z}^{2}\left( \rm{J}_{06},{\rm J}_{06}\right) $. Then $\theta \ =\ \big( \alpha _{1}\Delta
_{22},\ 0,\ 0\big) $ for some $\alpha _{1}\in \mathbb{C}$. The automorphism
group ${\rm Aut}\left( {\rm J}_{06}\right) $
consists of the automorphisms $\phi $ given by a matrix of the following
form:%
\begin{equation*}
\phi =%
\begin{pmatrix}
1 & 0 & 0 \\ 
0 & a_{22} & 0 \\ 
0 & 0 & a_{33}%
\end{pmatrix}%
.
\end{equation*}%
Let $\phi =\bigl(a_{ij}\bigr)\in $ ${\rm Aut}\left( {\rm J}_{06}\right).$ 
Then $\theta \ast \phi \ =\ \left( \alpha _{1}a_{22}^{2}\Delta_{22},\ 0,\ 0\right) $. Hence we have the representative $\big( 0,\ 0,\ 0\big)$ and $\big( \Delta _{22},\ 0,\ 0\big)$. Thus we get the superalgebras ${\rm R}
_{17} $ and ${\rm R}_{18}$.

    \item 
\underline{${\rm R}^{+}={\rm J}_{07}$}. Then ${\rm Z}^{2}\left(
{\rm J}_{07},{\rm J}_{07}\right) =\left\{ 0\right\} $. So we get the superalgebra ${\rm R}
_{19}$.

    \item 
\underline{${\rm R}^{+}={\rm J}_{08}$}. Let $\theta =\left(
{\rm B}_{1},\ {\rm B}_{2},\ {\rm B}_{3}\right) $ be an arbitrary element of ${\rm Z}^{2}\left( \rm{J
}_{08},{\rm J}_{08}\right) $.Then $\theta \in \left\{ \eta _{1},\ldots
,\eta _{7}\right\} $ where%
\begin{longtable}{lcl}
$\eta _{1} $&$=$&$\left( \frac{2\alpha _{1}\alpha _{3}}{1-2\alpha _{2}}\Delta
_{22}+\alpha _{1}\Delta _{23}+\frac{\alpha _{1}\left( 1-2\alpha _{2}\right) 
}{2\alpha _{3}}\Delta _{33},\ \alpha _{2}\Delta _{12}+\frac{1-4\alpha _{2}^{2}%
}{4\alpha _{3}}\Delta _{13},\ \alpha _{3}\Delta _{12}-\alpha _{2}\Delta
_{13}\right) ,$ \\
$\eta _{2} $&$=$&$\left( \frac{\alpha _{1}}{\alpha _{2}}\Delta _{22}+\alpha
_{1}\Delta _{23}+\alpha _{1}\alpha _{2}\Delta _{33},\ 
\frac{1}{2}\Delta_{12}+\alpha _{2}\Delta _{13},\ 
-\frac{1}{2}\Delta _{13}\right) ,$ \\
$\eta _{3} $&$=$&$\left( \alpha _{1}\Delta _{33},\ 
-\frac{1}{2}\Delta _{12}+\alpha_{2}\Delta _{13},\ 
\frac{1}{2}\Delta _{13}\right) ,$ \\
$\eta _{4} $&$=$&$\left( \alpha _{1}\Delta _{22},\ 
\frac{1}{2}\Delta _{12},\ 
\alpha_{2}\Delta _{12}-\frac{1}{2}\Delta _{13}\right) ,$ \\
$\eta _{5} $&$=$&$\left(0,\ 
-\frac{1}{2}\Delta _{12},\ 
-\frac{1}{2}\Delta_{13}\right) ,$ \\
$\eta _{6} $&$=$&$\left(0,\ 
\frac{1}{2}\Delta _{12},\ 
\frac{1}{2}\Delta _{13}\right),$
\end{longtable}
for some $\alpha _{1},\alpha _{2},\alpha _{3}\in \mathbb{C}$. The
automorphism group  ${\rm Aut}\left( {\rm J}
_{08}\right) $ consists of the automorphisms $\phi $ given by a matrix of
the following form: 
\begin{equation*}
\phi =%
\begin{pmatrix}
1 & 0 & 0 \\ 
0 & a_{22} & a_{23} \\ 
0 & a_{32} & a_{33}%
\end{pmatrix}%
.
\end{equation*}

\begin{itemize}
\item $\theta =\eta _{1}$. Let $\phi $ be the first of the following
matrices if $\alpha _{1}=0$ or the second if $\alpha _{1}\neq 0$:%
\begin{equation*}
\begin{pmatrix}
1 & 0 & 0 \\ 
0 & \alpha _{2}+\frac{1}{2} & \frac{2\alpha
_{2}-1}{2\alpha _{3}} \\ 
0 & \alpha _{3} & 1%
\end{pmatrix}%
,\  
\begin{pmatrix}
1 & 0 & 0 \\ 
0 & \frac{2\alpha_2+1}{2}\sqrt{\frac{1-2\alpha _{2}}{2\alpha_1\alpha _{3}}} & 
\frac{2\alpha _{2}-1}{2\alpha _{3}} \\ 
0 & \sqrt{\frac{\alpha _{3}(1-2\alpha _{2})}{2\alpha_1}} & 1
\end{pmatrix}
\end{equation*}
Then $\theta \ast \phi \in \left\{ \left( 0,\ \frac{1}{2}\Delta _{12},\ -\frac{1}{2}\Delta _{13}\right) ,\ 
\left( \Delta _{22},\frac{1}{2}\Delta _{12},-\frac{1%
}{2}\Delta _{13}\right) \right\} $. So we get the superalgebras ${\rm R}_{20}$%
\ and ${\rm R}_{21}$.


\item $\theta =\eta _{2}$. Let $\phi=\phi_1$  if $\alpha _{1}=0$ and 
$\phi=\phi_2$  if  $\alpha _{1}\neq 0$:%
\begin{equation*}
\phi_1 \ = \ \begin{pmatrix}
1 & 0 & 0 \\ 
0 & 1 & -\alpha _{2} \\ 
0 & 0 & 1%
\end{pmatrix}%
, \ 
\phi_2 \ = \ \begin{pmatrix}
1 & 0 & 0 \\ 
0 & \sqrt{\frac{\alpha _{2}}{\alpha _{1}}} & 
-\alpha _{2} \\ 
0 & 0 & 1%
\end{pmatrix}%
.
\end{equation*}%
Then $\theta \ast \phi \in 
\left\{ \left( 0,\ \frac{1}{2}\Delta _{12},\ -\frac{1}{2}\Delta _{13}\right),\ 
\left( \Delta _{22},\ \frac{1}{2}\Delta _{12},\ -\frac{1}{2}\Delta _{13}\right) \right\} $. Thus we have the superalgebras ${\rm R}_{20}$ and ${\rm R}_{21}$.

\item $\theta =\eta _{3}$. 
Let $\phi=\phi_1$   if $\alpha _{1}=0$ and $\phi=\phi_2$  if $\alpha _{1}\neq 0$:
\begin{equation*}
\phi_1\ = \ \begin{pmatrix}
1 & 0 & 0 \\ 
0 & \alpha _{2} & 1 \\ 
0 & 1 & 0%
\end{pmatrix},%
\ 
\phi_2 \ = \ \begin{pmatrix}
1 & 0 & 0 \\ 
0 & \frac{1}{\sqrt{\alpha _{1}}}\alpha _{2} & 1 \\ 
0 & \frac{1}{\sqrt{\alpha _{1}}} & 0%
\end{pmatrix}%
.
\end{equation*}%

Then $\theta \ =\ \left( 0,\ \frac{1}{2}\Delta _{12},\ -\frac{1}{2}\Delta
_{13}\right)$ and $\theta \ast \phi \ =\ \left( \Delta _{22},\ \frac{1}{2}\Delta _{12},\ -\frac{1}{2}\Delta _{13}\right) $. Hence, in this case we obtain the superalgebra ${\rm R}_{20}$ and ${\rm R}_{21}$.

\item $\theta =\eta _{4}$. Let $\phi=\phi_1 $  if $\alpha _{1}=0$ and  
$\phi=\phi_2$ if $\alpha _{1}\neq 0$:%
\begin{equation*}
\phi_1 \ = \ \begin{pmatrix}
1 & 0 & 0 \\ 
0 & 1 & 0 \\ 
0 & \alpha _{2} & 1%
\end{pmatrix}, \ 
\phi_2 \ = \ \begin{pmatrix}
1 & 0 & 0 \\ 
0 &  {\alpha^{-\frac 12}_{1}} & 0 \\ 
0 & {\alpha _{2}} {\alpha^{-\frac 12}_{1}} & 1%
\end{pmatrix}%
\end{equation*}%
Then $\theta \ast \phi \in \left\{ 
\left( 0,\ \frac{1}{2}\Delta _{12},\ -\frac{1}{2}\Delta _{13}\right), \ 
\left( \Delta _{22},\ \frac{1}{2}\Delta _{12},\ -\frac{1}{2}\Delta _{13}\right) \right\} $. So we have the superalgebras ${\rm R}_{20} $ and ${\rm R}_{21}$.

\item $\theta =\eta _{5}$. We obtain the superalgebra ${\rm R}_{22}$.

\item $\theta =\eta _{6}$. We obtain the superalgebra ${\rm R}_{23}$.
\end{itemize}

    \item 
\underline{${\rm R}^{+}={\rm J}_{09}$}. Then ${\rm Z}^{2}\left( {\rm J}%
_{09},{\rm J}_{09}\right) =\left\{ 0\right\} $. So we get the superalgebra $%
{\rm R}_{24}$.

    \item 
\underline{${\rm R}^{+}={\rm J}_{10}$}. Then ${\rm Z}^{2}\left( {\rm J}%
_{10},{\rm J}_{10}\right) =\emptyset $. Thus there is no right
alternative superalgebras with ${\rm R}^{+}={\rm J}_{10}$.

    \item 
\underline{${\rm R}^{+}={\rm J}_{11}$}. Then ${\rm Z}^{2}\left( {\rm J}%
_{11},{\rm J}_{11}\right) =\emptyset $. Thus there is no right
alternative superalgebras with ${\rm R}^{+}={\rm J}_{11}$.

    \item 
\underline{${\rm R}^{+}={\rm J}_{12}$}. Let $\theta =
\big({\rm B}_{1},\ {\rm B}_{2},\ {\rm B}_{3}\big) \neq 0$ be an arbitrary element of ${\rm Z}^{2}\left( 
{\rm J}_{12},{\rm J}_{12}\right) $. Then $\theta \in \left\{ \eta
_{1},\ldots ,\eta _{4}\right\} $ where%
\begin{longtable}{lcl}
$\eta _{1} $&$=$&$\big( \alpha _{1}\Delta _{22}+\alpha _{2}\Delta _{33}+\alpha
_{3}\Delta _{23},\ 0,\ 0\big) ,$ \\
$\eta _{2} $&$=$&$\big( -\frac{\alpha _{1}\alpha _{3}}{\alpha _{2}}\Delta
_{22}+\alpha _{1}\Delta _{23}-\frac{\alpha _{1}\alpha _{2}}{\alpha _{3}}%
\Delta _{33},\ \alpha _{2}\Delta _{12}-\frac{\alpha _{2}^{2}}{\alpha _{3}}%
\Delta _{13}, \ \alpha _{3}\Delta _{12}-\alpha _{2}\Delta _{13}\big) _{\alpha
_{2}\alpha _{3}\neq 0},$ \\
$\eta _{3} $&$=$&$\big( \alpha _{1}\Delta _{33},\ 
\alpha _{2}\Delta _{13},\ 0 \big)_{\alpha _{2}\neq 0},$ \\
$\eta _{4} $&$=$&$\big( \alpha _{1}\Delta _{22},\ 0,\ \alpha _{2}\Delta _{12}\big)_{\alpha _{2}\neq 0},$
\end{longtable}
for some $\alpha _{1},\alpha _{2},\alpha _{3}\in \mathbb{C}$. The
automorphism group   ${\rm Aut}\left( {\rm J}_{12}\right) $, consists of the automorphisms $\phi $ given by a matrix of
the following form:%
\begin{equation*}
\phi =%
\begin{pmatrix}
a_{11} & 0 & 0 \\ 
0 & a_{22} & a_{23} \\ 
0 & a_{32} & a_{33}%
\end{pmatrix}%
.
\end{equation*}

\begin{itemize}
\item $\theta =\eta _{1}$. Let $\phi =\bigl(a_{ij}\bigr)\in $ ${\rm Aut}%
\left( {\rm J}_{12}\right) $. Then 
$\theta \ast \phi \ =\ \left( \beta_{1}\Delta _{22}+\beta _{2}\Delta _{33}+\beta _{3}\Delta _{23},\ 0,\ 0\right),$
where
\begin{longtable}{lcl}
$\beta _{1} $&$=$&${a^{-1}_{11}}\big( \alpha _{1}a_{22}^{2}+2\alpha
_{3}a_{22}a_{32}+\alpha _{2}a_{32}^{2}\big) ,$ \\
$\beta _{2} $&$=$&${a^{-1}_{11}}\big( \alpha _{1}a_{23}^{2}+2\alpha
_{3}a_{23}a_{33}+\alpha _{2}a_{33}^{2}\big) ,$ \\
$\beta _{3} $&$=$&${a^{-1}_{11}}\big( \alpha _{1}a_{22}a_{23}+\alpha
_{2}a_{32}a_{33}+\alpha _{3}a_{22}a_{33}+\alpha _{3}a_{23}a_{32}\big).$
\end{longtable}%
So we have the representatives 
$\big( \Delta _{23},\ 0,\ 0\big) $ and $\big(\Delta _{22},\ 0,\ 0\big)$. 
Therefore we get the superalgebras ${\rm R}_{25}$
and ${\rm R}_{26}$.

\item $\theta =\eta _{2}$. Let 
$\phi=\phi_1 $  if $\alpha _{1}=0$ and 
$\phi=\phi_2$  if $\alpha _{1}\neq 0$:%
\begin{equation*}
\phi_1\ =\ \begin{pmatrix}
1 & 0 & 0 \\ 
0 & 0 & 1 \\ 
0 & -{\alpha _{3}}{\alpha _{2}^{-2}} & {\alpha _{3}}{\alpha^{-1}_{2}}%
\end{pmatrix}%
, \ 
\phi_2 \ = \ \begin{pmatrix}
-{\alpha _{1}}{\alpha _{2}^{2}}\alpha _{3}^{-1} & 0 & 0 \\ 
0 & 0 & {\alpha _{1}}{\alpha _{2}^{3}}\alpha _{3}^{-2} \\ 
0 & 1 & {\alpha _{1}}{\alpha_{2}^{2}}\alpha _{3}^{-1}%
\end{pmatrix}.%
\end{equation*}%
Then $\theta \ast \phi \in \big\{ \big( 0,\ 0,\ \Delta _{12}\big) ,\ 
\big(\Delta _{22},\ 0,\ \Delta _{12}\big) \big\} $. So we get  ${\rm R}_{27}$ and ${\rm R}_{28}$.

\item $\theta =\eta _{3}$. Let 
$\phi=\phi_1 $  if $\alpha _{1}=0$ and  $\phi=\phi_2$  if $\alpha _{1}\neq 0$:%
\begin{equation*}
\phi_1 \ = \ \begin{pmatrix}
1 & 0 & 0 \\ 
0 & 1 & \alpha _{2} \\ 
0 & 1 & 0%
\end{pmatrix}%
,\ %
\phi_2 \ = \ \begin{pmatrix}
\alpha _{1} & 0 & 0 \\ 
0 & 1 & \alpha _{1}\alpha _{2} \\ 
0 & 1 & 0%
\end{pmatrix}%
.
\end{equation*}%
Then $\theta \ast \phi \in \big\{ \big( 0,\ 0,\ \Delta _{12}\big), \ 
\big(\Delta _{22},\ 0,\ \Delta _{12}\big) \big\} $. 
So we get  ${\rm R}_{27}$ and ${\rm R}_{28}$.

\item $\theta =\eta _{4}$. Let $\phi=\phi_1$   if $\alpha _{1}=0$ and 
$\phi=\phi_2$  if $\alpha _{1}\neq 0$:%
\begin{equation*}
\phi_1 \ = \ \begin{pmatrix}
1 & 0 & 0 \\ 
0 & 1 & 0 \\ 
0 & 0 & \alpha _{2}%
\end{pmatrix}%
, \ 
\phi_2 \ = \ \begin{pmatrix}
\alpha _{1} & 0 & 0 \\ 
0 & 1 & 0 \\ 
0 & 0 & \alpha _{1}\alpha _{2}%
\end{pmatrix}%
.
\end{equation*}%
Then $\theta \ast \phi \in \big\{ \big( 0, \ 0,\ \Delta _{12}\big),\ 
\big(\Delta _{22},\ 0,\ \Delta _{12}\big) \big\} $. So we get   ${\rm R}_{27}$ and ${\rm R}_{28}$.
\end{itemize}
\end{enumerate}

\subsection{Right alternative superalgebras of type $(2,1)$}

\begin{proposition}
Let ${\bf J}$ be a $3$-dimensional Jordan superalgebra of type $\left(
2,1\right) $. Then ${\bf J}$ is isomorphic to one of the following
superalgebras:

\begin{longtable}{l c l l l ll}
    \hline

    ${\bf J}_{01}$ & $:$ & $e_{1}\cdot e_{1}=e_{1}$ &  $e_{2}\cdot e_{2}=e_{2}$  &  &  \\
    ${\bf J}_{02}$ & $:$ & $e_{1}\cdot e_{1}=e_{1}$ &  $e_{2}\cdot e_{2}=e_{2}$  & $e_{1}\cdot f_{1}=f_{1}$ &  &  \\
    ${\bf J}_{03}$ & $:$ & $e_{1}\cdot e_{1}=e_{1}$ & $e_{2}\cdot e_{2}=e_{2}$ & $e_{1}\cdot f_{1}=\frac{1}{2} f_{1}$ &  &  \\
    ${\bf J}_{04}$ & $:$ & $e_{1}\cdot e_{1}=e_{1}$ &  $e_{2}\cdot e_{2}=e_{2}$& $e_{1}\cdot f_{1}=\frac{1}{2} f_{1}$ & $e_{2}\cdot f_{1}=\frac{1}{2} f_{1}$ &  \\
    ${\bf J}_{05}$ & $:$ & $e_{1}\cdot e_{1}=e_{1}$ &  &  &  \\
    ${\bf J}_{06}$ & $:$ & $e_{1}\cdot e_{1}=e_{1}$ && $e_{1}\cdot f_{1}=\frac{1}{2} f_{1}$ &  &  \\
    ${\bf J}_{07}$ & $:$ & $e_{1}\cdot e_{1}=e_{1}$ && $e_{1}\cdot f_{1}=f_{1}$ &  &  \\
    ${\bf J}_{08}$ & $:$ & $e_{1}\cdot e_{1}=e_{1}$ & $e_{1}\cdot e_{2}=e_{2}$ &  &  \\
    ${\bf J}_{09}$ & $:$ & $e_{1}\cdot e_{1}=e_{1}$ & $e_{1}\cdot e_{2}=e_{2}$ & $e_{1}\cdot f_{1}=\frac{1}{2} f_{1}$ &  \\
    ${\bf J}_{10}$ & $:$ & $e_{1}\cdot e_{1}=e_{1}$ & $e_{1}\cdot e_{2}=e_{2}$ & $e_{1}\cdot f_{1}=f_{1}$ &  \\
    ${\bf J}_{11}$ & $:$ & $e_{1}\cdot e_{1}=e_{1}$ & $e_{1}\cdot e_{2}=\frac{1}{2} e_{2}$ &  &  \\
    ${\bf J}_{12}$ & $:$ & $e_{1}\cdot e_{1}=e_{1}$ & $e_{1}\cdot e_{2}=\frac{1}{2} e_{2}$ & $e_{1}\cdot f_{1}=\frac{1}{2} f_{1}$ &  \\
    ${\bf J}_{13}$ & $:$ & $e_{1}\cdot e_{1}=e_{1}$ & $e_{1}\cdot e_{2}=\frac{1}{2} e_{2}$ & $e_{1}\cdot f_{1}=f_{1}$ &  \\
    ${\bf J}_{14}$ & $:$ & $e_{1}\cdot e_{1}=e_{2}$ &  &  &  \\
    ${\bf J}_{15}$ & $:$ & \text{trivial} \\
    \hline
\end{longtable}

\end{proposition}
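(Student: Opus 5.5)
The plan is to classify a Jordan superalgebra ${\bf J}={\bf J}_0\oplus{\bf J}_1$ of type $(2,1)$ by first fixing its even part and then adding the single odd vector $f_1$. The even part is a $2$-dimensional Jordan algebra, i.e. a commutative Jordan algebra. Its isomorphism classes are classical:
\begin{itemize}
\item $\mathbb{C}\oplus\mathbb{C}$ ($e_1e_1=e_1$, $e_2e_2=e_2$);
\item $\mathbb{C}\oplus 0$ ($e_1e_1=e_1$);
\item the unital algebra $e_1e_1=e_1$, $e_1e_2=e_2$;
\item the non-unital algebra $e_1e_1=e_1$, $e_1e_2=\frac12 e_2$;
\item the nilpotent algebra $e_1e_1=e_2$;
\item the trivial algebra.
\end{itemize}
This list follows from the Peirce decomposition with respect to an idempotent when one exists, and from nilpotency otherwise.

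Since ${\bf J}_1=\langle f_1\rangle$ is one-dimensional and the product is supercommutative, $f_1\cdot f_1=-f_1\cdot f_1$, so $f_1\cdot f_1=0$. The whole odd structure is therefore a linear map $R:{\bf J}_0\to{\rm End}({\bf J}_1)\cong\mathbb{C}$, $e\cdot f_1=\lambda(e)f_1$. In other words, ${\bf J}_1$ is a one-dimensional Jordan bimodule over ${\bf J}_0$, and the super Jordan identity reduces exactly to the Jordan bimodule conditions on $\lambda$.

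For each idempotent $e$, linearizing the Jordan identity gives $2\lambda(e)^3-3\lambda(e)^2+\lambda(e)=0$, so $\lambda(e)\in\{0,\frac12,1\}$; this is the usual Peirce constraint. For nilpotent $n$, the relation $\lambda(n^2)=\lambda(n)^2$ together with the Jordan identity forces $\lambda(n)=0$.

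Case by case, this gives the following.
\begin{itemize}
\item For $\mathbb{C}\oplus\mathbb{C}$, the pair $(\lambda(e_1),\lambda(e_2))$ must satisfy the constraint on the idempotent $e_1+e_2$ as well as on $e_1$ and $e_2$. The possible pairs up to swapping $e_1,e_2$ are $(0,0)$, $(1,0)$, $(\frac12,0)$, $(\frac12,\frac12)$. The pair $(1,1)$ would force $\lambda(e_1+e_2)=2$ and is excluded; likewise $(1,\frac12)$ gives $\frac32$ and is excluded. This yields ${\bf J}_{01}$--${\bf J}_{04}$.
\item For $\mathbb{C}\oplus 0$ with the annihilated even vector $e_2$, nilpotency forces $\lambda(e_2)=0$. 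This yields ${\bf J}_{05}$--${\bf J}_{07}$.
\item For the unital algebra, $\lambda(e_2)=0$ since $e_2$ is nilpotent, and $\lambda(e_1)\in\{0,\frac12,1\}$. This yields ${\bf J}_{08}$--${\bf J}_{10}$.
\item For the non-unital algebra the same reasoning gives ${\bf J}_{11}$--${\bf J}_{13}$.
\item For the nilpotent and trivial algebras only $\lambda=0$ survives, giving ${\bf J}_{14}$ and ${\bf J}_{15}$.
\end{itemize}

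The main obstacle is the step that kills $\lambda$ on nilpotent and Peirce-$\frac12$ elements. It needs the full linearized Jordan identity rather than only the idempotent constraint. For example, in the unital case the equation $\lambda(e_2)\lambda(e_1)$-terms $=\lambda(e_1e_2)$-terms together with $e_2^2=0$ must be checked. In the $e_1e_2=\frac12e_2$ case one must verify that $\lambda(e_2)$ is forced to be $0$ by the Jordan identity applied to $(e_2,e_2,f_1)$ and $(e_1,e_2,f_1)$. I would carry this out by writing the super Jordan identity with two even arguments and one odd argument, reducing it to a polynomial identity in $\lambda$, and solving. Non-isomorphism is then immediate from the invariants: the even-part algebra, and the multiset of eigenvalues of $L_e$ on $f_1$ for the idempotents $e$.
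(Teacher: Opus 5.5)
The paper gives no proof of this proposition; it imports the list from the existing classification of low-dimensional Jordan superalgebras (cf.\ \cite{als}). Your argument is therefore an independent, self-contained derivation, and its structure is sound. Supercommutativity gives $f_1\cdot f_1=0$. Since ${\bf J}_1{\bf J}_1=0$, every term of the linearized super Jordan identity with two odd entries vanishes, and terms with at most one odd entry carry no signs. So the condition is exactly that $\lambda$ defines a one-dimensional Jordan bimodule:
\[
\lambda\bigl((ab)c\bigr)+2\lambda(a)\lambda(b)\lambda(c)=\lambda(a)\lambda(bc)+\lambda(b)\lambda(ca)+\lambda(c)\lambda(ab)\qquad(a,b,c\in{\bf J}_0).
\]
Combined with the six $2$-dimensional Jordan algebras, this yields exactly ${\bf J}_{01}$--${\bf J}_{15}$. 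Compared with quoting the list, your route explains its shape: there is no $f_1\cdot f_1$ term, and idempotents act on $f_1$ only by $0,\frac12,1$.

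One step is justified incorrectly. The relation $\lambda(n^2)=\lambda(n)^2$ does not follow from the bimodule axioms; it already fails for the idempotent $e_1$ of ${\bf J}_{03}$, where $\lambda(e_1^2)=\frac12\neq\frac14$. So you cannot use it to kill $\lambda$ on nilpotent elements. Instead, put $a=b=c=n$ in the identity above, which gives $\lambda(n^3)+2\lambda(n)^3=3\lambda(n)\lambda(n^2)$. If $n^2=0$ this forces $\lambda(n)=0$ at once. That handles:
\begin{itemize}
\item $e_2$ in every non-semisimple even part, including the Peirce-$\frac12$ element of $e_1e_2=\frac12e_2$, so your ``main obstacle'' disappears;
\item both basis vectors of the trivial even part;
\item the case $e_1^2=e_2$: apply it first to $e_2$, then to $e_1$, using $e_1^3=0$.
\end{itemize}

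Finally, your sketch asserts but does not check that the surviving values of $\lambda$ actually give Jordan superalgebras; for that you need the mixed triples. In the $\mathbb{C}\oplus\mathbb{C}$ case, $(e_1,e_1,e_2)$ and $(e_2,e_2,e_1)$ give $\lambda_1\lambda_2(2\lambda_1-1)=\lambda_1\lambda_2(2\lambda_2-1)=0$. This is precisely the exclusion you obtained from the idempotent $e_1+e_2$. In the remaining cases the mixed triples hold trivially once $\lambda(e_2)=0$, because every product involving $e_2$ lies in $\mathbb{C}e_2$.
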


\subsubsection{The classification Theorem A2}
\begin{theoremA2}
Let ${\rm R}$ be a nontrivial $3$-dimensional right alternative
superalgebra of type $\left( 2,1\right) $. Then ${\rm R}$ is isomorphic
to one of the following superalgebras:

\begin{longtable}{llllllllllllllllllll}
${\bf R}_{01}$&$:$&$e_{1}e_{1}=e_{1}$&$e_{2}e_{2}=e_{2}$\\

${\bf R}_{02}$&$:$&$e_{1}e_{1}=e_{1}$&$e_{2}e_{2}=e_{2}$&$e_{1}f_{1}=f_{1}$&$f_{1}e_{1}=f_{1}$\\

${\bf R}_{03}$&$:$&$e_{1}e_{1}=e_{1}$&$e_{2}e_{2}=e_{2}$&$e_{1}f_{1}=f_{1}$&$f_{1}e_{1}=f_{1}$&$f_{1}f_{1}=e_{1}$\\
 
${\bf R}_{04}$&$:$&$e_{1}e_{1}=e_{1}$&$e_{2}e_{2}=e_{2}$&$f_{1}e_{1}=f_{1}$ \\

${\bf R}_{05}$&$:$&$e_{1}e_{1}=e_{1}$&$e_{2}e_{2}=e_{2}$&$e_{1}f_{1}=f_{1}$\\

${\bf R}_{06}$&$:$&$e_{1}e_{1}=e_{1}$&$e_{2}e_{2}=e_{2}$&$e_{1}f_{1}=f_{1}$&$f_{1}f_{1}=e_{1}$\\

${\bf R}_{07}$&$:$&$e_{1}e_{1}=e_{1}$&$e_{2}e_{2}=e_{2}$&$e_{2}f_{1}=f_{1}$&$f_{1}e_{1}=f_{1}$\\

${\bf R}_{08}$&$:$&$e_{1}e_{1}=e_{1}$&$e_{2}e_{2}=e_{2}$&$e_{2}f_{1}=f_{1}$&$f_{1}e_{1}=f_{1}$&$f_{1}f_{1}=e_{2}$\\

${\bf R}_{09}$&$:$&$e_{1}e_{1}=e_{1}$\\

${\bf R}_{10}$&$:$&$e_{1}e_{1}=e_{1}$&$f_{1}f_{1}=e_{2}$\\

${\bf R}_{11}$&$:$&$e_{1}e_{1}=e_{1}$&$f_{1}e_{1}=f_{1}$\\

${\bf R}_{12}$&$:$&$e_{1}e_{1}=e_{1}$&$f_{1}e_{1}=f_{1}$&$f_{1}f_{1}=e_{2}$\\

${\bf R}_{13}$&$:$&$e_{1}e_{1}=e_{1}$&$e_{1}f_{1}=f_{1}$\\

${\bf R}_{14}$&$:$&$e_{1}e_{1}=e_{1}$&$e_{1}f_{1}=f_{1}$&$f_{1}f_{1}=e_{1}$\\

${\bf R}_{15}$&$:$&$e_{1}e_{1}=e_{1}$&$e_{1}f_{1}=f_{1}$&$f_{1}e_{1}=f_{1}$\\

${\bf R}_{16}$&$:$&$e_{1}e_{1}=e_{1}$&$e_{1}f_{1}=f_{1}$&$f_{1}e_{1}=f_{1}$&$f_{1}f_{1}=e_{1}$\\

${\bf R}_{17}$&$:$&$e_{1}e_{1}=e_{1}$&$e_{1}e_{2}=e_{2}$&$e_{2}e_{1}=e_{2}$\\

${\bf R}_{18}$&$:$&$e_{1}e_{1}=e_{1}$&$e_{1}e_{2}=e_{2}$&$e_{2}e_{1}=e_{2}$&$e_{1}f_{1}=f_{1}$\\

${\bf R}_{19}$&$:$&$e_{1}e_{1}=e_{1}$&$e_{1}e_{2}=e_{2}$&$e_{2}e_{1}=e_{2}$&$e_{1}f_{1}=f_{1}$&$f_{1}f_{1}=e_{2}$\\

${\bf R}_{20}$&$:$&$e_{1}e_{1}=e_{1}$&$e_{1}e_{2}=e_{2}$&$e_{2}e_{1}=e_{2}$&$f_{1}e_{1}=f_{1}$\\

${\bf R}_{21}$&$:$&$e_{1}e_{1}=e_{1}$&$e_{1}e_{2}=e_{2}$&$e_{2}e_{1}=e_{2}$&$e_{1}f_{1}=f_{1}$&$f_{1}e_{1}=f_{1}$\\

${\bf R}_{22}$&$:$&$e_{1}e_{1}=e_{1}$&$e_{1}e_{2}=e_{2}$&$e_{2}e_{1}=e_{2}$&$e_{1}f_{1}=f_{1}$&$f_{1}e_{1}=f_{1}$&$f_{1}f_{1}=e_{2}$\\

${\bf R}_{23}$&$:$&$e_{1}e_{1}=e_{1}$&$e_{1}e_{2}=e_{2}$\\

${\bf R}_{24}$&$:$&$e_{1}e_{1}=e_{1}$&$e_{1}e_{2}=e_{2}$&$f_{1}f_{1}=e_{2}$\\

${\bf R}_{25}$&$:$&$e_{1}e_{1}=e_{1}$&$e_{2}e_{1}=e_{2}$\\

${\bf R}_{26}$&$:$&$e_{1}e_{1}=e_{1}$&$e_{2}e_{1}=e_{2}$&$e_{1}f_{1}=f_{1}$\\

${\bf R}_{27}$&$:$&$e_{1}e_{1}=e_{1}$&$e_{2}e_{1}=e_{2}$&$e_{1}f_{1}=f_{1}$&$f_{1}f_{1}=e_{1}$\\

${\bf R}_{28}$&$:$&$e_{1}e_{1}=e_{1}$&$e_{2}e_{1}=e_{2}$&$e_{1}f_{1}=f_{1}$&$f_{1}f_{1}=e_{2}$\\

${\bf R}_{29}$&$:$&$e_{1}e_{1}=e_{1}$&$e_{1}e_{2}=e_{2}$&$f_{1}e_{1}=f_{1}$\\

${\bf R}_{30}$&$:$&$e_{1}e_{1}=e_{1}$&$e_{1}e_{2}=e_{2}$&$f_{1}e_{1}=f_{1}$&$f_{1}f_{1}=e_{2}$\\

${\bf R}_{31}$&$:$&$e_{1}e_{1}=e_{1}$&$e_{2}e_{1}=e_{2}$&$f_{1}e_{1}=f_{1}$\\

${\bf R}_{32}$&$:$&$e_{1}e_{1}=e_{1}$&$e_{1}e_{2}=e_{2}$&$e_{1}f_{1}=f_{1}$\\

${\bf R}_{33}$&$:$&$e_{1}e_{1}=e_{1}$&$e_{2}e_{1}=e_{2}$&$e_{1}f_{1}=f_{1}$&$f_{1}e_{1}=f_{1}$\\

${\bf R}_{34}$&$:$&$e_{1}e_{1}=e_{1}$&$e_{2}e_{1}=e_{2}$&$e_{1}f_{1}=f_{1}$&$f_{1}e_{1}=f_{1}$&$f_{1}f_{1}=e_{1}$\\

${\bf R}_{35}$&$:$&$e_{1}e_{1}=e_{1}$&$e_{2}e_{1}=e_{2}$&$e_{1}f_{1}=f_{1}$&$f_{1}e_{1}=f_{1}$&$f_{1}f_{1}=e_{2}$\\

${\bf R}_{36}$&$:$&$e_{1}e_{1}=e_{1}$&$e_{1}e_{2}=e_{2}$&$e_{1}f_{1}=f_{1}$&$f_{1}e_{1}=f_{1}$\\

${\bf R}_{37}$&$:$&$e_{1}e_{1}=e_{2}$\\

${\bf R}_{38}$&$:$&$e_{1}e_{1}=e_{2}$&$f_{1}f_{1}=e_{2}$\\

${\bf R}_{39}$&$:$&$f_{1}f_{1}=e_{1}$
\end{longtable}
\end{theoremA2}

\subsubsection{The proof of  Theorem A2}
Let ${\bf R}$ be a nontrivial $3$-dimensional right alternative
superalgebra of type $\left( 2,1\right) $. Then ${\bf R}^{+}$ is a $3$%
-dimensional Jordan superalgebra of type $\left( 2,1\right) $. Then we may
assume ${\bf R}^{+}\in \left\{ {\bf J}_{01},\ldots ,{\bf J}_{15}\right\} $. So we have the following cases:

\begin{enumerate}[I.]
    \item 
\underline{${\bf R}^{+}={\bf J}_{01}$}. Then ${\rm Z}^{2}\left( {\bf J}_{01},{\bf J}_{01}\right) =\left\{ 0\right\} $. So we get the superalgebra $%
{\bf R}_{01}$.

 \item \underline{${\bf R}^{+}={\bf J}_{02}$}. Let $\theta =
 \big(
{\rm B}_{1},\ {\rm B}_{2},\ {\rm B}_{3}\big) $ be an arbitrary element of ${\rm Z}^{2}\left( \bf{J}_{02},{\bf J}_{02}\right) $. Then $\theta \ =\ \big( \alpha _{1}\Delta
_{33},\ 0,\ 0\big) $ for some $\alpha _{1}\in \mathbb{C}$. The automorphism
group  ${\rm Aut}\left( {\bf J}_{02}\right) $ 
consists of the automorphisms $\phi $ given by a matrix of the following
form:%
\begin{equation*}
\phi =%
\begin{pmatrix}
1 & 0 & 0 \\ 
0 & 1 & 0 \\ 
0 & 0 & a_{33}%
\end{pmatrix}%
.
\end{equation*}%
Let $\phi =\bigl(a_{ij}\bigr)\in $ ${\rm Aut}\left( {\bf J}_{02}\right) 
$. Then $\theta \ast \phi =\left( \alpha _{1}a_{33}^{2}\Delta
_{33},\ 0,\ 0\right) $. Hence we have the representatives 
$\big( 0,\ 0,\ 0\big)
$ and $\big( \Delta _{33},\ 0,\ 0\big)$. 
Thus we get the superalgebras ${\bf R}_{02} $ and ${\bf R}_{03}$.

   \item \underline{${\bf R}^{+}={\bf J}_{03}$}. Let $\theta =\big(
{\rm B}_{1},\ {\rm B}_{2},\ {\rm B}_{3}\big) $ be an arbitrary element of ${\rm Z}^{2}\left({\bf J}_{03},{\bf J}_{03}\right) $. Then  $\theta =\left( 0,\ 0,\ - \frac{1}{2}\Delta _{13}\right) $ and  $\theta =\left( \alpha _{1}\Delta
_{33},\ 0,\ \frac{1}{2}\Delta _{13}\right) $ for some $\alpha _{1}\in \mathbb{C}$%
. The automorphism group  ${\rm Aut}\left( {\bf J}_{03}\right) $  consists of the automorphisms $\phi $ given by a matrix of
the following form:%
\begin{equation*}
\phi =%
\begin{pmatrix}
1 & 0 & 0 \\ 
0 & 1 & 0 \\ 
0 & 0 & a_{33}%
\end{pmatrix}%
.
\end{equation*}%
Let $\phi =\bigl(a_{ij}\bigr)\in $ ${\rm Aut}\left( {\bf J}_{03}\right) 
$. Then $\theta \ast \phi =\left( \alpha _{1}a_{33}^{2}\Delta _{33},\ 0,\ \frac{1}{2}\Delta _{13}\right).$ 
Hence, we have the representatives  $\left( 0,\ 0,\ -\frac{1}{2}\Delta _{13}\right),$ 
$\left( 0,\ 0,\ \frac{1}{2}\Delta _{13}\right)$ and 
$\left( \Delta _{33},\ 0,\ \frac{1}{2}\Delta_{13}\right).$ 
Thus we get the superalgebras  ${\bf R}_{04},$   ${\bf R}_{05}$ and ${\bf R}_{06}$.

   \item \underline{${\bf R}^{+}={\bf J}_{04}$}. Let $\theta =\big(
{\rm B}_{1},\ {\rm B}_{2},\ {\rm B}_{3}\big) $ be an arbitrary element of ${\rm Z}^{2}\left( {\bf J}_{04},{\bf J}_{04}\right) $. Then $\theta \in \left\{ \eta _{1},\eta
_{2}\right\} $ where%
\begin{longtable}{lcllcl}
$\eta _{1} $&$=$&$\left( \alpha _{1}\Delta _{33},\ 0,\ \frac{1}{2}\Delta _{13}-\frac{1}{2}\Delta _{23}\right),$ &
$\eta _{2} $&$=$&$\left( 0,\ \alpha _{1}\Delta _{33},\ -\frac{1}{2}\Delta _{13}+\frac{1}{2}\Delta _{23}\right),$
\end{longtable}%
for some $\alpha _{1}\in \mathbb{C}$. 
The automorphism group  ${\rm Aut}\left( {\bf J}_{04}\right) $  consists of the
automorphisms $\phi $ given by a matrix of the following form:%
\begin{equation*}
\begin{pmatrix}
1 & 0 & 0 \\ 
0 & 1 & 0 \\ 
0 & 0 & a_{33}%
\end{pmatrix}%
,\ %
\begin{pmatrix}
0 & 1 & 0 \\ 
1 & 0 & 0 \\ 
0 & 0 & a_{33}%
\end{pmatrix}%
.
\end{equation*}

\begin{itemize}
\item $\theta =\eta _{1}$. If $\alpha _{1}=0$, we get the superalgebra ${\bf R}_{07}$. If $\alpha _{1}\neq 0$, we choose $\phi $ to be the following
automorphism:%
\begin{equation*}
\phi =%
\begin{pmatrix}
1 & 0 & 0 \\ 
0 & 1 & 0 \\ 
0 & 0 & \frac{1}{\sqrt{\alpha _{1}}}%
\end{pmatrix}%
.
\end{equation*}%
Then $\theta \ast \phi =\left( \Delta _{33},\ 0,\ \frac{1}{2}\Delta _{13}-\frac{1%
}{2}\Delta _{23}\right) $. Hence we obtain the superalgebra ${\bf R}_{08}$.

\item $\theta =\eta _{2}$. We choose $\phi $ to be the following
automorphism:%
\begin{equation*}
\phi =%
\begin{pmatrix}
0 & 1 & 0 \\ 
1 & 0 & 0 \\ 
0 & 0 & 1%
\end{pmatrix}%
.
\end{equation*}%
Then $\eta _{2}\ast \phi =\eta _{1}$.
\end{itemize}

   \item \underline{${\rm R}^{+}={\bf J}_{05}$}. Let 
   $\theta =\big( {\rm B}_{1},\ {\rm B}_{2},\ {\rm B}_{3}\big) $ be an arbitrary element of ${\rm Z}^{2}\left( {\bf J}_{05},{\bf J}_{05}\right) $. Then $\theta =\big( 0,\ \alpha _{1}\Delta_{33},\ 0\big) $ for some $\alpha _{1}\in \mathbb{C}$. The automorphism
group   ${\rm Aut}\left( {\bf J}_{05}\right) $ 
consists of the automorphisms $\phi $ given by a matrix of the following
form:%
\begin{equation*}
\phi =%
\begin{pmatrix}
1 & 0 & 0 \\ 
0 & a_{22} & 0 \\ 
0 & 0 & a_{33}%
\end{pmatrix}%
.
\end{equation*}%
Let $\phi =\bigl(a_{ij}\bigr)\in $ ${\rm Aut}\left( {\bf J}_{05}\right) 
$. 
Then $\theta \ast \phi =\left( 0,\ {\alpha _{1}}{a^{-1}_{22}}a_{33}^{2}\Delta _{33},\ 0\right) $. Hence we have the representatives $\big(0,\ 0,\ 0\big)$ and $\big( 0,\ \Delta _{33},\ 0\big).$ Thus we get   ${\bf R}_{09}$ and ${\bf R}_{10}$.

   \item \underline{${\bf R}^{+}={\bf J}_{06}$}. Let $\theta =\big(
{\rm B}_{1},\ {\rm B}_{2},\ {\rm B}_{3}\big) $ be an arbitrary element of ${\rm Z}^{2}\left( {\bf J}_{06},{\bf J}_{06}\right) $. Then $\theta $ $\in \left\{ \eta _{1},\eta
_{2}\right\} $ where%
\begin{longtable}{lcllcl}
$\eta _{1} $&$=$&$\left( 0,\ \alpha _{1}\Delta _{33},\ -\frac{1}{2}\Delta_{13}\right),$ & 
$\eta _{2} $&$=$&$\left( \alpha _{1}\Delta _{33},\ 0,\ \frac{1}{2}\Delta _{13}\right)
,$
\end{longtable}
for some $\alpha _{1}\in \mathbb{C}$. The automorphism group   ${\rm Aut}\left( {\bf J}_{06}\right) $  consists of the
automorphisms $\phi $ given by a matrix of the following form:%
\begin{equation*}
\phi =%
\begin{pmatrix}
1 & 0 & 0 \\ 
0 & a_{22} & 0 \\ 
0 & 0 & a_{33}%
\end{pmatrix}%
.
\end{equation*}

\begin{itemize}
\item $\theta =\eta _{1}$. Let $\phi =\bigl(a_{ij}\bigr)\in $ ${\rm Aut}%
\left( {\bf J}_{06}\right) $. Then 
$\theta \ast \phi\ =\ \left( 0,\ {\alpha _{1}}{a^{-1}_{22}}a_{33}^{2}\Delta _{33},\ -\frac{1}{2}\Delta _{13}\right).$ 
Hence we have the representatives $\left( 0,\ 0,\ -\frac{1}{2}\Delta_{13}\right)$ and 
$\left( 0,\ \Delta _{33},\ -\frac{1}{2}\Delta _{13}\right).$ 
Thus we get the superalgebras ${\bf R}_{11}$ and ${\bf R}_{12}$.

\item $\theta =\eta _{2}$. Let $\phi =\bigl(a_{ij}\bigr)\in $ ${\rm Aut}%
\left( {\bf J}_{06}\right) $. Then 
$\theta \ast \phi =\left( \alpha_{1}a_{33}^{2}\Delta _{33},\ 0,\ \frac{1}{2}\Delta _{13}\right).$ Hence we have
the representatives $\left( 0,\ 0,\ \frac{1}{2}\Delta _{13}\right)$ and 
$\left(\Delta _{33},\ 0,\ \frac{1}{2}\Delta _{13}\right).$ 
Thus we get the superalgebras ${\bf R}_{13}$ and ${\bf R}_{14}.$
\end{itemize}

   \item \underline{${\bf R}^{+}={\bf J}_{07}$}. Let $\theta =\big(
{\rm B}_{1},\ {\rm B}_{2},\ {\rm B}_{3}\big) $ be an arbitrary element of ${\rm Z}^{2}\left( {\bf J}_{07},{\bf J}_{07}\right) $. Then 
$\theta =\big( \alpha _{1}\Delta_{33},\ 0,\ 0\big) $ for some $\alpha _{1}\in \mathbb{C}$. The automorphism   ${\rm Aut}\left( {\bf J}_{07}\right) $ 
consists of the automorphisms $\phi $ given by a matrix of the following
form:%
\begin{equation*}
\phi =%
\begin{pmatrix}
1 & 0 & 0 \\ 
0 & a_{22} & 0 \\ 
0 & 0 & a_{33}%
\end{pmatrix}%
.
\end{equation*}%
Let $\phi =\bigl(a_{ij}\bigr)\in $ ${\rm Aut}\left( {\bf J}_{07}\right) 
$. Then $\theta \ast \phi =\left( \alpha _{1}a_{33}^{2}\Delta
_{33},\ 0,\ 0\right) $. Hence we have the representatives 
$\big( 0,\ 0,\ 0\big)$ and $\big( \Delta _{33},0,0\big) $. 
Thus we get the superalgebras ${\bf R}_{15}$ and ${\bf R}_{16}$.

   \item \underline{${\bf R}^{+}={\bf J}_{08}$}. Then ${\rm Z}^{2}\left( {\bf J}_{08},{\bf J}_{08}\right) =\left\{ 0\right\} $. So we get the superalgebra $%
{\bf R}_{17}$.

   \item \underline{${\bf R}^{+}={\bf J}_{09}$}. Let $\theta =\big(
{\rm B}_{1},\ {\rm B}_{2},\ {\rm B}_{3}\big) $ 
be an arbitrary element of ${\rm Z}^{2}\left( {\bf J}_{09},{\bf J}_{09}\right) $. Then $\theta $ $\in \left\{ \eta _{1},\eta
_{2}\right\} $ where%
\begin{longtable}{lcllcl}
$\eta _{1} $&$=$&$\left( 0,\ \alpha _{1}\Delta _{33},\ \frac{1}{2}\Delta _{13}\right),$&$
\eta _{2} $&$=$&$\left( 0,\ 0,\ -\frac{1}{2}\Delta _{13}\right),$
\end{longtable}%
for some $\alpha _{1}\in \mathbb{C}$. The automorphism group    ${\rm Aut}\left( {\bf J}_{09}\right) $  consists of the
automorphisms $\phi $ given by a matrix of the following form
\begin{equation*}
\phi =%
\begin{pmatrix}
1 & 0 & 0 \\ 
0 & a_{22} & 0 \\ 
0 & 0 & a_{33}%
\end{pmatrix}%
.
\end{equation*}

\begin{itemize}
\item $\theta =\eta _{1}$. Let $\phi =\bigl(a_{ij}\bigr)\in $ ${\rm Aut}%
\left( {\bf J}_{09}\right) $. Then 
$\theta \ast \phi =\left( 0,\ {\alpha _{1}}{a^{-1}_{22}}a_{33}^{2}\Delta _{33},\ \frac{1}{2}\Delta _{13}\right).$
Hence we have representatives 
$\left( 0,\ 0,\ \frac{1}{2}\Delta _{13}\right)$ and 
$\left( 0,\ \Delta _{33},\ \frac{1}{2}\Delta _{13}\right) $. Thus we get the
superalgebras ${\bf R}_{18}$ and ${\bf R}_{19}$.

\item $\theta =\eta _{2}$. We get the superalgebra ${\bf R}_{20}$.
\end{itemize}

   \item \underline{${\bf R}^{+}={\bf J}_{10}$}. Let $\theta =\big(
{\rm B}_{1},\ {\rm B}_{2},\ {\rm B}_{3}\big) $ be an arbitrary element of ${\rm Z}^{2}\left( {\bf J}_{10},{\bf J}_{10}\right) $. Then 
$\theta =\big( 0,\ \alpha _{1}\Delta_{33},\ 0\big) $ for some $\alpha _{1}\in \mathbb{C}$. The automorphism group 
   ${\rm Aut}\left( {\bf J}_{10}\right) $ 
consists of the automorphisms $\phi $ given by a matrix of the following
form:%
\begin{equation*}
\phi =%
\begin{pmatrix}
1 & 0 & 0 \\ 
0 & a_{22} & 0 \\ 
0 & 0 & a_{33}%
\end{pmatrix}%
.
\end{equation*}%
Let $\phi =\bigl(a_{ij}\bigr)\in $ ${\rm Aut}\left( {\bf J}_{10}\right) 
$. Then 
$\theta \ast \phi =\left( 0,\ {\alpha _{1}}{a^{-1}_{22}}a_{33}^{2}\Delta _{33},\ 0\right).$ 
Hence, we have the representatives $\big(
0,\ 0,\ 0\big)$ and $\big( 0,\ \Delta _{33},\ 0\big) $. Thus, we get $%
{\bf R}_{21}$ and ${\bf R}_{22}$.

\item 
\underline{${\bf R}^{+}={\bf J}_{11}$}. Let $\theta =\big(
{\rm B}_{1},\ {\rm B}_{2},\ {\rm B}_{3}\big) $ be an arbitrary element of ${\rm Z}^{2}\left( {\bf J}_{11},{\bf J}_{11}\right) $. Then $\theta $ $\in \left\{ \eta _{1},\eta
_{2}\right\},$ where 
\begin{longtable}{lcllcl}
$\eta _{1} $&$=$&$\left( 0,\frac{1}{2}\Delta _{12}+\alpha _{1}\Delta
_{33},0\right),$ &
$\eta _{2} $&$=$&$\left( 0,-\frac{1}{2}\Delta _{12},0\right),$
\end{longtable}%
for some $\alpha _{1}\in \mathbb{C}$. The automorphism group  ${\rm Aut}\left( {\bf J}_{11}\right) $ consists of the
automorphisms $\phi $ given by a matrix of the following form:%
\begin{equation*}
\phi =%
\begin{pmatrix}
1 & 0 & 0 \\ 
a_{21} & a_{22} & 0 \\ 
0 & 0 & a_{33}%
\end{pmatrix}%
.
\end{equation*}

\begin{itemize}
\item $\theta =\eta _{1}$. Let $\phi =\bigl(a_{ij}\bigr)\in $ ${\rm Aut}%
\left( {\bf J}_{11}\right) $. Then 
$\theta \ast \phi =\left( 0,\ \frac{1}{2}\Delta _{12}+{\alpha _{1}}{a^{-1}_{22}}a_{33}^{2}\Delta _{33},\ 0\right) $.
Hence we have the representatives 
$\left( 0,\ \frac{1}{2}\Delta _{12},\ 0\right)$ and 
$\left( 0,\ \frac{1}{2}\Delta _{12}+\Delta _{33},\ 0\right).$ 
Thus we get the superalgebras ${\bf R}_{23}$ and ${\bf R}_{24}$.

\item $\theta =\eta _{2}$. We get the superalgebra ${\bf R}_{25}$.
\end{itemize}

   \item \underline{${\bf R}^{+}={\bf J}_{12}$}. Let $\theta =\big(
{\rm B}_{1},\ {\rm B}_{2},\ {\rm B}_{3}\big) $ be an arbitrary element of ${\rm Z}^{2}\left({\bf J}_{12},{\bf J}_{12}\right) $. Then $\theta $ $\in \left\{ \eta
_{1},\ldots ,\eta _{4}\right\},$ where 
\begin{longtable}{lcllcl}
$\eta _{1} $&$=$&$\left( \alpha _{1}\Delta _{33},\ -\frac{1}{2}\Delta _{12}+\alpha_{2}\Delta _{33},\ \frac{1}{2}\Delta _{13}\right) ,$ & 
$\eta _{2} $&$=$&$\left( 0,\ \frac{1}{2}\Delta _{12}+\alpha _{1}\Delta _{33},\ -\frac{1}{2}\Delta _{13}\right) ,$ \\
$\eta _{3} $&$=$&$\left( 0,\ -\frac{1}{2}\Delta _{12},\ -\frac{1}{2}\Delta_{13}\right),$ & 
$\eta _{4} $&$=$&$\left( 0,\ \frac{1}{2}\Delta _{12},\ \frac{1}{2}\Delta _{13}\right),$
\end{longtable}
for some $\alpha _{1},\alpha _{2}\in \mathbb{C}$. The automorphism group   ${\rm Aut}\left( {\bf J}_{12}\right) $ consists of
the automorphisms $\phi $ given by a matrix of the following form:%
\begin{equation*}
\phi =%
\begin{pmatrix}
1 & 0 & 0 \\ 
a_{21} & a_{22} & 0 \\ 
0 & 0 & a_{33}%
\end{pmatrix}%
.
\end{equation*}

\begin{itemize}
\item $\theta =\eta _{1}$. Let $\phi =\bigl(a_{ij}\bigr)\in $ ${\rm Aut}%
\left( {\bf J}_{12}\right) $. Then 
\begin{center}$\theta \ast \phi \ =\ \left( \alpha
_{1}a_{33}^{2}\Delta _{33},\ -\frac{1}{2}\Delta _{12}+{a^{-1}_{22}}a_{33}^{2}\left( \alpha _{2}-\alpha _{1}a_{21}\right) \Delta _{33},\ \frac{1}{2}\Delta _{13}\right) $.
\end{center} Hence we have the representatives 
$\left( 0,\ -\frac{1}{2}\Delta _{12},\ \frac{1}{2}\Delta _{13}\right),$ \ 
$\left( \Delta _{33},\ -\frac{1}{2}\Delta _{12},\ \frac{1}{2}\Delta _{13}\right),$ and 
$\left( 0,\ -\frac{1}{2}\Delta_{12}+\Delta _{33},\ \frac{1}{2}\Delta _{13}\right).$ Thus we get the superalgebras ${\bf R}_{26},$ \ ${\bf R}_{27},$ and ${\bf R}_{28}$.

\item $\theta =\eta _{2}$. Let $\phi =\bigl(a_{ij}\bigr)\in $ ${\rm Aut}%
\left( {\bf J}_{12}\right) $. Then 
$\theta \ast \phi =\left( 0,\ \frac{1}{2}\Delta _{12}+{\alpha _{1}}{a^{-1}_{22}}a_{33}^{2}\Delta _{33}, \ -\frac{1}{2}\Delta _{13}\right).$ 
Hence we have representatives 
$\left( 0,\ \frac{1}{2}\Delta _{12},\ -\frac{1}{2}\Delta _{13}\right)$ and  
$\left( 0,\ \frac{1}{2}\Delta_{12}+\Delta _{33},\ -\frac{1}{2}\Delta _{13}\right).$ 
Thus we get the
superalgebras ${\bf R}_{29}$ and ${\bf R}_{30}$.

\item $\theta =\eta _{3}$. We get the superalgebra ${\bf R}_{31}$.

\item $\theta =\eta _{4}$. We get the superalgebra ${\bf R}_{32}$.
\end{itemize}

   \item \underline{${\bf R}^{+}={\bf J}_{13}$}. Let $\theta =\big(
{\rm B}_{1},\ {\rm B}_{2},\ {\rm B}_{3}\big) $ be an arbitrary element of ${\rm Z}^{2}\left( {\bf J}_{13},{\bf J}_{13}\right) $. Then $\theta $ $\in \left\{ \eta _{1},\eta
_{2}\right\} $ where 
\begin{longtable}{lcllcl}
$\eta _{1} $&$=$&$\left( \alpha _{1}\Delta _{33},\ -\frac{1}{2}\Delta _{12}+\alpha
_{2}\Delta _{33},\ 0\right),$ &
$\eta _{2} $&$=$&$\left( 0,\ \frac{1}{2}\Delta _{12},\ 0\right),$
\end{longtable}
for some $\alpha _{1},\alpha _{2}\in \mathbb{C}$. The automorphism group    ${\rm Aut}\left( {\bf J}_{13}\right) $  consists of
the automorphisms $\phi $ given by a matrix of the following form:%
\begin{equation*}
\phi =%
\begin{pmatrix}
1 & 0 & 0 \\ 
a_{21} & a_{22} & 0 \\ 
0 & 0 & a_{33}%
\end{pmatrix}%
.
\end{equation*}

\begin{itemize}
\item $\theta =\eta _{1}$. Let $\phi =\bigl(a_{ij}\bigr)\in $ ${\rm Aut}%
\left( {\bf J}_{13}\right) $. Then 
\begin{center}
$\theta \ast \phi \ =\ \left( \alpha
_{1}a_{33}^{2}\Delta _{33},\ -\frac{1}{2}\Delta _{12}+{a^{-1}_{22}}a_{33}^{2}\left( \alpha _{2}-\alpha _{1}a_{21}\right) \Delta _{33},\ 0\right).$
\end{center}
Hence we have the representatives 
$\left( 0,\ -\frac{1}{2}\Delta_{12},\ 0\right),$ 
$\left( \Delta _{33},\ -\frac{1}{2}\Delta _{12},\ 0\right),$ and 
$\left( 0,\ -\frac{1}{2}\Delta _{12}+\Delta _{33},\ 0\right) $. Thus we get the
superalgebras ${\bf R}_{33},$ ${\bf R}_{34},$ and ${\bf R}_{35}$.

\item $\theta =\eta _{2}$. We get the superalgebra ${\bf R}_{36}$.
\end{itemize}

   \item \underline{${\bf R}^{+}={\bf J}_{14}$}. Let $\theta =\big(
{\rm B}_{1},\ {\rm B}_{2},\ {\rm B}_{3}\big) $ be an arbitrary element of ${\rm Z}^{2}\left({\bf J}_{14},{\bf J}_{14}\right) $. Then $\theta =\big( 0,\ \alpha _{1}\Delta
_{33},\ 0\big) $ for some $\alpha _{1}\in \mathbb{C}$. The automorphism
group   ${\rm Aut}\left( {\bf J}_{14}\right) $ 
consists of the automorphisms $\phi $ given by a matrix of the following
form:%
\begin{equation*}
\phi =%
\begin{pmatrix}
a_{11} & 0 & 0 \\ 
a_{21} & a_{11}^{2} & 0 \\ 
0 & 0 & a_{33}%
\end{pmatrix}%
.
\end{equation*}%
Let $\phi =\bigl(a_{ij}\bigr)\in $ ${\rm Aut}\left( {\bf J}_{14}\right) 
$. Then 
$\theta \ast \phi =\left( 0,\ {\alpha _{1}}{a_{11}^{-2}}a_{33}^{2}\Delta _{33},\ 0\right) $. 
Hence, we have the representatives 
$\big(0,\ 0,\ 0\big)$ and 
$\big( 0,\ \Delta _{33},\ 0\big) $. Thus, we get  $%
{\bf R}_{37}$ and ${\bf R}_{38}$. 

   \item \underline{${\bf R}^{+}={\bf J}_{15}$}. Let $\theta =\big(
{\rm B}_{1},\ {\rm B}_{2},\ {\rm B}_{3}\big) \neq 0$ be an arbitrary element of ${\rm Z}^{2}\left( 
{\bf J}_{15},{\bf J}_{15}\right) $. Then 
$\theta =\big( \alpha_{1}\Delta _{33},\ \alpha _{2}\Delta _{33},\ 0\big)$ 
for some $\alpha_{1},\alpha _{2},\alpha _{3}\in \mathbb{C}$. The automorphism group    ${\rm Aut}\left( {\bf J}_{15}\right) $  consists of the automorphisms $\phi$ given by a matrix of the following form: %
\begin{equation*}
\phi =%
\begin{pmatrix}
a_{11} & a_{12} & 0 \\ 
a_{21} & a_{22} & 0 \\ 
0 & 0 & a_{33}%
\end{pmatrix}%
.
\end{equation*}%
Let $\phi =\bigl(a_{ij}\bigr)\in $ ${\rm Aut}\left( {\bf J}_{15}\right) 
$. Then $\theta \ast \phi =\left( \beta _{1}\Delta _{33},\ \beta _{2}\Delta_{33},\ 0\right) $ where%
\begin{longtable}{lcllcl}
$\beta _{1} $&$=$&$\frac{a_{33}^{2}(\alpha_{1}a_{22}-\alpha _{2}a_{12}) }{a_{11}a_{22}-a_{12}a_{21}},$ & 
$\beta _{2} $&$=$&$-\frac{a_{33}^{2}(\alpha_{1}a_{21}-\alpha _{2}a_{11})}{a_{11}a_{22}-a_{12}a_{21}}.$
\end{longtable}
Since $\left( \alpha _{1},\alpha _{2}\right) \neq \left( 0,0\right) $, we
may assume $\left( \alpha _{1},\alpha _{2}\right) =\left( 1,0\right) $. So
we get  ${\bf R}_{39}$.
 
\end{enumerate}

 \subsection{Corollaries: $\Omega$- and $\big($binary-$\Omega\big)$-superalgebras}

Let us remember that each $3$-dimensional binary associative (=alternative) algebra is associative \cite{G62};
each $3$-dimensional binary Lie algebras is Lie   \cite{ikp20};
each $3$-dimensional binary $\mathfrak{perm}$ algebra is a $\mathfrak{perm}$ algebra \cite{akl}; 
 each $3$-dimensional binary $\big( -1,1\big)$-algebra is a  $\big( -1,1\big)$-algebra \cite{akl}.   
 The present observation raised the following question.
\medskip 

\noindent{\bf Open question.}
Is it true that each $3$-dimensional binary-$\Omega$\footnote{
By a binary-$\Omega$ algebra we mean an algebra such that each $2$-generated subalgebra is an $\Omega$ algebra for a family of polynomial identities $\Omega$.}
 algebra is an $\Omega$ algebra?

\medskip

It is known that the super-version of the present question is not true:
there is a $2$-dimensional non-Lie binary Lie superalgebra \cite{GRS}.
The aim of the present section is to study super-analogues of the present open question for 
$\mathfrak{perm}$, associative and $\big(-1,1\big)$-superalgebras.
Let us also remember the following inclusions for algebras:

  \medskip

\begin{center}

\begin{tikzpicture}[node distance=1.5cm, auto, text=black] 
\node (Asc) at (0,0) {AssCom}; 
\node at (1.15, 0) {$\subset$}; 

\node (perm) at (2, 0) {$\mathfrak{perm}$}; 
\node at (2.7, 0.3) {\rotatebox{25}{$\subset$}}; 
\node at (2.7, -0.3) {\rotatebox{-25}{$\subset$}};

\node (ass) at (4, 0.5) {Associative}; 
\node (bp) at (4, -0.5) {binary $\mathfrak{perm}$}; 

\node at (5.5, 0.5) {$\subset$}; 
\node at (5.5, -0.5) {$\subset$}; 
\node at (5.5, 0) {\rotatebox{-25}{$\subset$}};

\node (-11) at (7.0, 0.5) {$(-1,1)$-}; 
\node (ncj) at (7.0, -0.5) {binary ass.}; 

\node at (8.3, 0.3) {\rotatebox{-25}{$\subset$}}; 
\node at (8.3, -0.3) {\rotatebox{25}{$\subset$}};

\node (b-11) at (10, 0) {binary $(-1,1)$-}; 
\node at (11.6, 0) {$\subset$}; 

\node (ra) at (13.4, 0) {Right alternative};


\end{tikzpicture}

\end{center}

To simplify our forthcoming results, we give the classification of associative commutative superalgebras in dimension $3.$

\begin{corollary}
    Let $\rm{A}$ be a complex $3$-dimensional  associative commutative superalgebra of type $(1,2)$. 
    Then $\rm{A}$ is isomorphic to ${\rm R}_{04}^{0},$ 
${\rm R}_{06}^{0},$ 
 ${\rm R}_{17},$ 
 ${\rm R}_{19},$ or
 ${\rm R}_{24}.$

\end{corollary}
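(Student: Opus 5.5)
Every associative commutative superalgebra is right alternative: associativity makes every associator vanish, so the super-identity $(x,y,z)=-(-1)^{|y||z|}(x,z,y)$ holds trivially. Hence any complex $3$-dimensional associative commutative superalgebra ${\rm A}$ of type $(1,2)$ is either trivial or isomorphic to one of ${\rm R}_{01},\ldots,{\rm R}_{28}$ of Theorem A1. The plan is to go through that list and keep exactly the superalgebras that are supercommutative and associative. (The trivial superalgebra is also associative and supercommutative; I read the corollary as concerning nontrivial ones, in line with Theorem A1.)

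\emph{Supercommutativity.} Supercommutativity means $xy=(-1)^{|x||y|}yx$, i.e.\ $[x,y]=0$, i.e.\ $\theta=0$ in the decomposition $x\ast_\theta y=x\bullet y+\theta(x,y)$. So I will keep only the cases in the proof of Theorem A1 where the representative $\theta$ is $0$. Reading them off:
\begin{itemize}
\item ${\rm R}_{04}^{\alpha}$ with $\alpha=0$ (from ${\rm J}_{02}$), giving $f_1f_2=e_1=-f_2f_1$;
\item ${\rm R}_{06}^{\alpha}$ with $\alpha=0$ (from ${\rm J}_{03}$);
\item ${\rm R}_{17}$ (from ${\rm J}_{06}$, $\theta=0$);
\item ${\rm R}_{19}$ (${\rm J}_{07}$);
\item ${\rm R}_{24}$ (${\rm J}_{09}$).
\end{itemize}
For the Jordan superalgebras ${\rm J}_{01}$, ${\rm J}_{04}$, ${\rm J}_{05}$ and ${\rm J}_{08}$, every element of ${\rm Z}^2$ listed is nonzero: in each case the $\Delta_{12}$ or $\Delta_{13}$ coefficient is a nonzero constant, or for ${\rm J}_{01}$ the condition $\alpha_2\neq-1$ forces $\theta\neq0$. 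Since ${\rm Z}^2({\rm J}_{10})$ and ${\rm Z}^2({\rm J}_{11})$ are empty, these give nothing. For ${\rm J}_{12}$ we only consider $\theta\neq0$. As a check, each remaining table entry visibly has $xy\neq\pm yx$ for some pair. For example, ${\rm R}_{26}$ has $f_1f_1=e_1\neq0$, while supercommutativity forces $f_1f_1=-f_1f_1$, i.e.\ $f_1f_1=0$.

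\emph{Associativity of the five survivors.} This is a direct check on basis products.
\begin{itemize}
\item ${\rm R}_{04}^0$: every product of three basis elements vanishes, since $e_1$ annihilates everything. The same holds for ${\rm R}_{06}^0$, because $f_2$ is annihilated and $e_1e_1=0$.
\item ${\rm R}_{17}$, ${\rm R}_{19}$, ${\rm R}_{24}$: here $e_1$ is an idempotent acting as the identity on the span of $e_1$ and the odd elements it touches, and all other products vanish, so associativity is immediate.
\end{itemize}

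\emph{Pairwise non-isomorphism.} This follows from Theorem A1, including the remark that ${\rm R}_{04}^\alpha\cong{\rm R}_{04}^{-\alpha}$ causes no collision at $\alpha=0$.

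\emph{Main obstacle.} The only real care point is the bookkeeping: making sure no parametric family contributes a commutative member other than the one at $\alpha=0$. For ${\rm R}_{01}^\alpha$, vanishing of the skew part would need both $\alpha=0$ and $\frac{\alpha-3}{\alpha+1}=0$, which is impossible. For ${\rm R}_{07}^\alpha$, the product $f_1f_1=e_1$ always survives, so no member is supercommutative.
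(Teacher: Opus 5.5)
Your proposal is correct and follows the same route as the paper, which gives no separate proof and obtains the corollary by reading off from Theorem A1 the supercommutative associative members. These are exactly the $\theta=0$ orbits over ${\rm J}_{02},{\rm J}_{03},{\rm J}_{06},{\rm J}_{07},{\rm J}_{09}$. One small slip, which does not affect the $\theta=0$ argument: your auxiliary check should read $xy\neq(-1)^{|x||y|}yx$ rather than $xy\neq\pm yx$, since for instance ${\rm R}_{25}$ has $f_1f_2=f_2f_1$.
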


\begin{corollary}
    Let $\bf{A}$ be a complex $3$-dimensional  associative commutative superalgebra of type $(2,1)$. 
    Then $\bf{A}$ is isomorphic to     ${\bf R}_{01},$ ${\bf R}_{02},$  
    ${\bf R}_{09},$  
    ${\bf R}_{15},$ 
    ${\bf R}_{17},$ 
     ${\bf R}_{21},$ or  
     ${\bf R}_{37}.$ 

\end{corollary}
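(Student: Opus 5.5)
The statement to prove is the last corollary: a $3$-dimensional associative commutative superalgebra of type $(2,1)$ is one of ${\bf R}_{01},{\bf R}_{02},{\bf R}_{09},{\bf R}_{15},{\bf R}_{17},{\bf R}_{21},{\bf R}_{37}$. I will read it off from Theorem A2 rather than classify from scratch.

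First I check that every associative (super)commutative superalgebra is right alternative. Associativity kills every associator, so the identity $(x,y,z)=-(-1)^{|y||z|}(x,z,y)$ holds trivially. Hence any such $\bf A$ is either trivial or isomorphic to one of ${\bf R}_{01},\dots,{\bf R}_{39}$.

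Supercommutativity means $xy=(-1)^{|x||y|}yx$, so the super-skew part $\theta=[\cdot,\cdot]$ vanishes. In the language of the proof of Theorem A2, $\bf A$ is exactly $({\bf J},\ast_0)$, with $\theta=0$, for a Jordan superalgebra ${\bf J}={\bf A}^{+}$ from the list ${\bf J}_{01},\dots,{\bf J}_{15}$. So I go through the cases of that proof and keep only those where the representative $\theta=0$ occurs. These are:
\begin{itemize}
\item ${\bf J}_{01}\to{\bf R}_{01}$, ${\bf J}_{02}\to{\bf R}_{02}$, ${\bf J}_{05}\to{\bf R}_{09}$, ${\bf J}_{07}\to{\bf R}_{15}$, ${\bf J}_{08}\to{\bf R}_{17}$, ${\bf J}_{10}\to{\bf R}_{21}$, ${\bf J}_{14}\to{\bf R}_{37}$;
\item ${\bf J}_{15}$ with $\theta=0$, which is the trivial algebra (presumably the authors silently exclude it, or count it separately).
\end{itemize}
In the remaining cases ${\bf J}_{03},{\bf J}_{04},{\bf J}_{06},{\bf J}_{09},{\bf J}_{11},{\bf J}_{12},{\bf J}_{13}$, every element of ${\rm Z}^2$ listed there has a nonzero $\Delta_{13}$, $\Delta_{23}$ or $\Delta_{12}$ component. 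So $\theta\neq0$ is forced, and these Jordan superalgebras admit no commutative right alternative structure. This reflects that such a Jordan superalgebra is not itself associative; for example, in ${\bf J}_{03}$ the element $e_1$ acts on $f_1$ with eigenvalue $\frac12$.

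Next I verify directly that the seven listed algebras are associative. Each has a multiplication table built from idempotent actions (an identity on a subspace) plus a square-zero piece in ${\bf R}_{37}$, so checking $(xy)z=x(yz)$ on basis triples is routine. Supercommutativity is visible from the tables: mixed products are symmetric, and ${\bf R}_{37}$ has no odd–odd products, so no sign issue arises. Their pairwise non-isomorphism follows from Theorem A2, or simply from their distinct ${\bf A}^{+}$.

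The main obstacle is the extraction step: making sure no nonzero $\theta$ that is trivial on pairs of homogeneous elements is actually supercommutative. This cannot happen, since $\theta$ is super-skew. With supercommutativity also $\theta=0$ exactly, and isomorphism classes inside the orbit of $0$ are just ${\bf J}$ itself. So the argument is bookkeeping, plus confirming that ${\rm Z}^2$ contains $0$ precisely in the listed cases.
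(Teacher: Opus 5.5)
Your proposal is correct and is essentially the paper's route: the corollary is stated without proof and is read off from Theorem A2. Your observation that supercommutativity forces $\theta=0$, so only the cases ${\bf J}_i$ with $0\in{\rm Z}^{2}({\bf J}_i,{\bf J}_i)$ matter, is an efficient way to do that inspection, and you are right that the trivial superalgebra (case ${\bf J}_{15}$, $\theta=0$) is tacitly excluded, since Theorem A2 lists only nontrivial superalgebras.
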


\subsubsection{$\mathfrak{perm}$ and binary  $\mathfrak{perm}$ superalgebras}

\begin{definition}
An associative superalgebra is called a $\mathfrak{perm}$  superalgebra if the following identity holds
\begin{longtable}{lclcl}
$x(yz) $&$= $&$(-1)^{|y||z|}x(zy).$
\end{longtable}
\end{definition}

\begin{definition}[see \cite{KS}]
A  superalgebra is called a binary $\mathfrak{perm}$ superalgebra if the following identities hold
\begin{longtable}{c}
$\left( x,y,z\right) \ = \ -\left( -1\right) ^{\left\vert y\right\vert \left\vert
z\right\vert }\left( x,z,y\right) \ = \ 
-\left( -1\right) ^{\left\vert x\right\vert \left\vert y \right\vert }\left( y,x,z\right),$\\
 $(xy)z + \left( -1\right)^{|x||y| +|x||z|+|y||z|}(zy)x \ = \   \left( -1\right)^{|y||z|}(xz)y + \left( -1\right)^{ |x||z|+|y||z|}(zx)y.$ 
\end{longtable}
\end{definition}

\begin{corollary}
Let $\rm{P}$ be a complex $3$-dimensional  $\mathfrak{perm}$ superalgebra of type $(1,2)$. Then $\rm{P}$ is an associative commutative superalgebra,  or it is isomorphic to 

\begin{center}
${\rm R}_{04}^{\alpha\neq 0},$ ${\rm R}_{05},$ ${\rm R}_{06}^{\alpha\neq 0},$    ${\rm R}_{09},$ ${\rm R}_{15},$    ${\rm R}_{22},$   ${\rm R}_{25},$ ${\rm R}_{26},$ or ${\rm R}_{27}.$
    
\end{center}

\end{corollary}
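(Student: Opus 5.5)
Every $\mathfrak{perm}$ superalgebra is associative, and associative superalgebras satisfy the right alternative super-identity. So any complex $3$-dimensional $\mathfrak{perm}$ superalgebra ${\rm P}$ of type $(1,2)$ is either trivial or isomorphic to exactly one algebra in the list of Theorem A1. The proof is therefore a filter: go through ${\rm R}_{01},\ldots,{\rm R}_{28}$ (including the parametric families) and decide which satisfy the two defining identities
\[
(x,y,z)=0, \qquad x(yz)=(-1)^{|y||z|}x(zy)
\]
on homogeneous basis elements. Since both identities are multilinear, it suffices to check them on triples from $\{e_1,f_1,f_2\}$, with the sign determined by parities.

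The computation will go family by family, using the small support of each multiplication table.

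\textbf{Step 1: the perm identity.} I will check this identity first, since it is cheap. It requires the right multiplication operator's argument $yz$ to be supersymmetric under left multiplication. Equivalently, $x\cdot[y,z]_{\rm super}=0$ for all $x$, i.e.\ left multiplications annihilate all supercommutators $yz-(-1)^{|y||z|}zy$.

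\textbf{Step 2: associativity.} For the survivors of Step 1, I will then check associativity directly.

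For the parametric families I expect the following.
\begin{itemize}
\item ${\rm R}_{01}^{\alpha}$, ${\rm R}_{02}$, ${\rm R}_{03}$, ${\rm R}_{07}^\alpha$, ${\rm R}_{08}$ should fail, because products like $(f_1f_2)f_1$ versus $f_1(f_2f_1)$ are nonzero and unequal.
\item ${\rm R}_{04}^{\alpha}$ and ${\rm R}_{05}$ are $2$-step nilpotent with products landing in $e_1$, and $e_1$ annihilates everything. So associativity is automatic, and the perm identity holds because $x(yz)\in e_1\cdot{\rm P}$ or ${\rm P}\cdot e_1$, which vanish.
\item ${\rm R}_{06}^\alpha$ is similar, with image $f_2$ annihilating everything.
\item ${\rm R}_{27}$ is ${\rm R}_{06}^{0}$-like, while ${\rm R}_{28}$ has $f_1f_1=e_1$ and $e_1f_1=f_2$. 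This gives $(f_1f_1)f_1=e_1f_1=f_2$ but $f_1(f_1f_1)=f_1e_1=-f_2$, so ${\rm R}_{28}$ fails.
\end{itemize}

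For the algebras containing the idempotent $e_1e_1=e_1$ (${\rm R}_{09}$--${\rm R}_{24}$), I will use Peirce-type reasoning: how $e_1$ acts on $f_1,f_2$ from each side. The perm identity forces $e_1$ to act trivially on the left of $f_i$ whenever $f_ie_1\ne e_1f_i$; for example, $x=e_1$, $y=f_1$, $z=e_1$ gives $e_1(f_1e_1)=e_1(e_1f_1)$. Thus ${\rm R}_{11}$, ${\rm R}_{13}$, ${\rm R}_{20}$, ${\rm R}_{23}$, ${\rm R}_{14}$, ${\rm R}_{10}$ and the like fail. Those with $f_1f_1=e_1$ fail through $f_1(f_1e_1)$ versus $(f_1f_1)e_1$ or the perm identity. ${\rm R}_{09}$, ${\rm R}_{15}$ and ${\rm R}_{22}$ ($f_ie_1=f_i$ only) satisfy both identities. ${\rm R}_{25}$ and ${\rm R}_{26}$ are nilpotent with image $e_1$ annihilating everything, so they pass.

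\textbf{Step 3: sorting.} Finally, the commutative ones among the survivors (${\rm R}_{04}^0$, ${\rm R}_{06}^0$, ${\rm R}_{17}$, ${\rm R}_{19}$, ${\rm R}_{24}$) are set aside as the associative commutative case of the preceding Corollary. This leaves exactly the stated list.

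The main obstacle is bookkeeping rather than ideas: carefully tracking super-signs $(-1)^{|y||z|}$ when $y,z$ are both odd, especially in ${\rm R}_{05}$, ${\rm R}_{25}$ and ${\rm R}_{26}$. There $f_if_j$ terms appear, and the perm identity involves $x(f_1f_2)=-x(f_2f_1)$. I would verify these few sign-sensitive cases explicitly, and handle the rest by the annihilator/idempotent observations above.
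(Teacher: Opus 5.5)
Your proposal is correct and follows the same route as the paper, which reads this corollary off Theorem A1 by testing associativity and the $\mathfrak{perm}$ identity on each listed superalgebra and then setting aside the supercommutative survivors; your final survivor list is right. Two of your previewed reasons need fixing: in ${\rm R}_{07}^{\alpha}$ one has $f_1f_2=f_2f_1=0$, and ${\rm R}_{07}^{0}$ is actually associative, so it is excluded only by the $\mathfrak{perm}$ identity ($x(f_1f_1)=-x(f_1f_1)$ forces $f_1e_1=0$, whereas $f_1e_1=f_2$); and ${\rm R}_{01}^{1}$ satisfies the $\mathfrak{perm}$ identity and has $(f_1f_2)f_1=f_1(f_2f_1)=0$, so it must be discarded via another associativity triple, e.g.\ $(f_2f_1)f_1=-4f_2\neq 0=f_2(f_1f_1)$.
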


\begin{corollary}
Let $\rm{BP}$ be a complex $3$-dimensional binary  $\mathfrak{perm}$ superalgebra of type $(1,2)$. Then $\rm{BP}$ is a $\mathfrak{perm}$ superalgebra  or isomorphic to 
${\rm R}_{07}^{\alpha}$ or ${\rm R}_{28}.$

    

\end{corollary}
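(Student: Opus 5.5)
Since every binary $\mathfrak{perm}$ superalgebra satisfies the right alternative super-identity (the first identity in the definition of binary $\mathfrak{perm}$ superalgebras), a $3$-dimensional binary $\mathfrak{perm}$ superalgebra of type $(1,2)$ is either trivial or isomorphic to one of ${\rm R}_{01}^{\alpha},\ldots,{\rm R}_{28}$ from Theorem A1. The plan is therefore a direct check of this list. For each superalgebra we test the two remaining defining identities of binary $\mathfrak{perm}$ superalgebras: the super-flexibility-type identity $(x,y,z)=-(-1)^{|x||y|}(y,x,z)$, and the second, four-term identity. By multilinearity it suffices to test them on basis triples $(x,y,z)\in\{e_1,f_1,f_2\}^3$, with the signs read off from parities.

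The superalgebras from the previous corollary (those of the form ${\rm R}_{04}^{\alpha},{\rm R}_{05},{\rm R}_{06}^{\alpha},{\rm R}_{09},{\rm R}_{15},{\rm R}_{22},{\rm R}_{25},{\rm R}_{26},{\rm R}_{27}$, together with the associative commutative ones) are $\mathfrak{perm}$. Hence they are binary $\mathfrak{perm}$, since $\mathfrak{perm}\subset$ binary $\mathfrak{perm}$, and they need no further check. We then check directly that ${\rm R}_{07}^{\alpha}$ and ${\rm R}_{28}$ satisfy both identities but are not $\mathfrak{perm}$. For them the only nonzero products are $e_1f_1$, $f_1e_1$ and $f_1f_1=e_1$, and $e_1^2=0$, so almost all associators vanish. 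The nonzero ones involve $f_1f_1e_1$, $f_1e_1f_1$, etc., for instance $(f_1,f_1,f_1)=e_1f_1-f_1e_1$. Because so few products are nonzero, verifying the identities on these is routine. Non-perm follows because, for example, the associator $(f_1,f_1,f_1)=2\alpha f_2$ in ${\rm R}_{07}^{\alpha}$ is nonzero for $\alpha\neq0$; for $\alpha=0$ the failure is of the perm identity, $f_1(f_1f_1)\neq -f_1(f_1f_1)$, since $f_1e_1\neq 0$.

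For every remaining superalgebra we exhibit one basis triple violating one of the binary $\mathfrak{perm}$ identities. The unital-type ones ${\rm R}_{10}$–${\rm R}_{14}$, ${\rm R}_{16}$–${\rm R}_{21}$, ${\rm R}_{23}$ and ${\rm R}_{24}$ contain the idempotent $e_1$ acting with different left and right actions. Typically the triple $(e_1,e_1,f_i)$ or $(f_i,e_1,e_1)$ against $(e_1,f_i,e_1)$ breaks the left super-alternativity part. ${\rm R}_{01}^{\alpha}$, ${\rm R}_{02}$, ${\rm R}_{03}$ and ${\rm R}_{08}$ contain both $f_1f_2$ and $e_1f_1$ products, and triples like $(f_1,f_1,f_2)$ should fail. ${\rm R}_{24}$ and ${\rm R}_{19}$, ${\rm R}_{17}$ are associative commutative and thus already covered. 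Care must be taken with ${\rm R}_{01}^{\alpha}$ because it is a one-parameter family: we must show the violating expression is nonzero for \emph{every} $\alpha\neq-1$, possibly using two different triples to cover exceptional values of $\alpha$.

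The main obstacle is exactly this bookkeeping. First, signs in the second identity are easy to get wrong, so before starting we would fix the sign conventions, checking them on a known perm example such as ${\rm R}_{06}^{\alpha}$. Second, for the parametric families ${\rm R}_{01}^{\alpha}$ and ${\rm R}_{07}^{\alpha}$ we must make sure that no special value of $\alpha$ slips through.
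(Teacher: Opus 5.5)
Your overall route matches the paper's. The corollary is stated there without a separate argument: it is read off from Theorem A1 by checking the two defining identities of binary $\mathfrak{perm}$ superalgebras on each listed superalgebra. Your treatment of ${\rm R}_{07}^{\alpha}$ and ${\rm R}_{28}$ is correct; in fact $(f_1,f_1,f_1)=e_1f_1-f_1e_1$ is their only nonzero associator.

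The step that fails as written is your exclusion of the ``unital-type'' superalgebras. The triples $(e_1,e_1,f_i)$, $(f_i,e_1,e_1)$, $(e_1,f_i,e_1)$ break left super-alternativity only in ${\rm R}_{10}$ and ${\rm R}_{14}$.

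\begin{itemize}
\item ${\rm R}_{11},{\rm R}_{13},{\rm R}_{18},{\rm R}_{20},{\rm R}_{23}$ are associative (they appear in the associative corollary). So every associator vanishes and the first identity holds identically; no triple can violate it. In ${\rm R}_{18}$ the element $e_1$ even acts identically on both sides. These can only be excluded by the second identity:
\begin{itemize}
\item $(x,y,z)=(e_1,e_1,f_1)$ gives $f_1=0$ in ${\rm R}_{11},{\rm R}_{20},{\rm R}_{23}$;
\item $(e_1,e_1,f_2)$ gives $f_2=0$ in ${\rm R}_{13}$;
\item $(e_1,f_1,f_1)$ gives $0=-2e_1$ in ${\rm R}_{18}$.
\end{itemize}
\item ${\rm R}_{12},{\rm R}_{16},{\rm R}_{21}$ have all associators with at most one odd entry equal to zero. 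They violate left super-alternativity only on triples with two odd entries:
\begin{itemize}
\item $(e_1,f_1,f_1)=-(f_1,e_1,f_1)$ reads $0=e_1$ in ${\rm R}_{12}$ and ${\rm R}_{21}$;
\item $(f_2,f_1,f_1)=(f_1,f_2,f_1)$ reads $-f_2=0$ in ${\rm R}_{16}$.
\end{itemize}
\end{itemize}

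Your concern about ${\rm R}_{01}^{\alpha}$ is justified, and it has the same flavour.
\begin{itemize}
\item The second identity at $(f_1,f_1,f_2)$ reduces to $2(\alpha+3)f_2=0$.
\item Left super-alternativity at $(e_1,f_1,f_1)$ versus $(f_1,e_1,f_1)$ reduces to $\alpha^2=-3$.
\end{itemize}
Two triples are therefore needed, and together they cover every $\alpha\neq-1$. In particular the binary associative ${\rm R}_{01}^{\pm{\bf i}\sqrt{3}}$ can be ruled out only through the second identity.

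Also, ${\rm R}_{08}$ has no $e_1f_1$ product and satisfies the second identity at $(f_1,f_1,f_2)$. It fails instead via $(f_1,f_2,f_1)=(f_2,f_1,f_1)$, which reads $-2f_2=0$.

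With these replacements your case-by-case verification goes through and yields exactly the statement.
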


\begin{corollary}
Let $\bf P$  be a complex $3$-dimensional  $\mathfrak{perm}$ superalgebra of type $(2,1)$.   
Then $\bf{P}$ is an associative  commutative superalgebra,  or it is isomorphic to 

\begin{center}
  ${\bf R}_{04},$   ${\bf R}_{10},$ ${\bf R}_{11},$    ${\bf R}_{20},$   ${\bf R}_{25},$ ${\bf R}_{31},$ ${\bf R}_{33},$   ${\bf R}_{38},$ or ${\bf R}_{39}.$
\end{center}

\end{corollary}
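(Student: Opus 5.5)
The statement to prove is the last corollary: the classification of $3$-dimensional $\mathfrak{perm}$ superalgebras of type $(2,1)$.

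A $\mathfrak{perm}$ superalgebra is associative, and every associative superalgebra is right alternative. Hence any complex $3$-dimensional $\mathfrak{perm}$ superalgebra $\bf P$ of type $(2,1)$ is either trivial (which is associative commutative) or isomorphic to one of ${\bf R}_{01},\ldots,{\bf R}_{39}$ from Theorem A2. The proof therefore reduces to checking, for each superalgebra in that list, two identities on homogeneous basis elements: associativity $(x,y,z)=0$, and the $\mathfrak{perm}$ identity $x(yz)=(-1)^{|y||z|}x(zy)$. The algebras already identified in the preceding corollary as associative commutative are $\mathfrak{perm}$ automatically, since then $yz=(-1)^{|y||z|}zy$. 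So only the remaining thirty-two algebras need to be examined.

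For each remaining ${\bf R}_i$ I will proceed as follows.

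\emph{Step 1: the $\mathfrak{perm}$ identity.} It only involves the products $x(yz)$. Write $[y,z]_s=yz-(-1)^{|y||z|}zy$. The identity says that every element $x$ left-annihilates the image of $[\cdot,\cdot]_s$. In each algebra this image is spanned by a few basis vectors, which can be read off the table. Examples:
\begin{itemize}
\item In ${\bf R}_{05}$ we have $[e_1,f_1]_s=f_1$, while $f_1f_1=0$ and $e_1f_1=f_1\neq0$. So ${\bf R}_{05}$ fails the identity.
\item In ${\bf R}_{04}$ we have $[f_1,e_1]_s=f_1$, and $e_if_1=0=f_1f_1$. So ${\bf R}_{04}$ satisfies the identity.
\end{itemize}
This quickly discards every algebra that contains a product $e_if_1=f_1$ (or $e_1e_2=e_2$) without the symmetric counterpart. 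It also discards the $f_1f_1=e_1$ cases where $e_1$ acts on the left on the supercommutator image.

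\emph{Step 2: associativity.} For each survivor of Step 1, check $(x,y,z)=0$ on basis triples. Only triples with nonzero products need to be examined.

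\emph{Step 3: assemble the list.} Collect the survivors and compare with ${\bf R}_{04}$, ${\bf R}_{10}$, ${\bf R}_{11}$, ${\bf R}_{20}$, ${\bf R}_{25}$, ${\bf R}_{31}$, ${\bf R}_{33}$, ${\bf R}_{38}$, ${\bf R}_{39}$. Some cases to confirm:
\begin{itemize}
\item ${\bf R}_{39}$ is $\mathfrak{perm}$: $f_1f_1=e_1$ and all products with $e_1$ vanish.
\item ${\bf R}_{38}$ is $\mathfrak{perm}$: it is supercommutative and associative.
\item ${\bf R}_{12}$ is not right alternative-consistent with associativity: $(f_1f_1)e_1=e_2e_1=0$, whereas $f_1(f_1e_1)=f_1f_1=e_2$. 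So it fails associativity.
\item ${\bf R}_{24}$ fails: $(f_1f_1)e_1=0\neq e_2$ but $f_1(f_1e_1)$... here $f_1e_1=0$. Instead, the failure comes from the $\mathfrak{perm}$ identity: $e_1$ acts on the left on the image $e_2$ of the supercommutator.
\end{itemize}

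The only real obstacle is bookkeeping of signs in the odd–odd products, e.g.\ $[f_1,f_1]_s=2f_1f_1$. I would handle these uniformly by tabulating the supercommutator image of each algebra first, and only then doing the associativity checks.
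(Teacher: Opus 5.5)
Your reduction is the one the paper uses: the corollary is read off from Theorem A2 by testing the defining identities on each listed algebra. If you carry out your Step 1/Step 2 filter on all thirty-two remaining algebras, it returns exactly the nine stated ones.

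\textbf{${\bf R}_{38}$.} One of your spot checks is wrong: ${\bf R}_{38}$ is \emph{not} supercommutative. Supercommutativity gives $f_1f_1=(-1)^{|f_1||f_1|}f_1f_1=-f_1f_1$, hence $f_1f_1=0$. But ${\bf R}_{38}$ has $f_1f_1=e_2$; this is precisely the super-skew part $\theta=(0,\Delta_{33},0)$ over ${\bf J}_{14}$. If your claim were true, ${\bf R}_{38}$ would already appear among the associative commutative superalgebras of the preceding corollary, and it does not. The correct justification uses your own Step 1. The supercommutator image is $\langle e_2\rangle$ (from $[f_1,f_1]_s=2e_2$), and $xe_2=0$ for all $x$. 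Associativity holds because every product lies in $\langle e_2\rangle$ and $e_2$ annihilates ${\bf R}_{38}$ from both sides, so all triple products vanish.

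\textbf{Where Step 2 is needed.} Your examples do not show where Step 2 is genuinely needed. ${\bf R}_{12}$ is already eliminated in Step 1, since $[f_1,e_1]_s=f_1$ and $f_1f_1=e_2\neq 0$. The one algebra that passes the $\mathfrak{perm}$ identity but must be rejected by associativity is ${\bf R}_{35}$. Its supercommutator image is $\langle e_2\rangle$ and $xe_2=0$ for all $x$, yet $(e_1f_1)f_1=f_1f_1=e_2\neq 0=e_1e_2=e_1(f_1f_1)$. Make this case explicit.

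\textbf{${\bf R}_{24}$.} Rewrite the ${\bf R}_{24}$ bullet, which abandons its first argument mid-sentence. The valid reason there is $e_1e_2=e_2\neq 0$ with $e_2=[e_1,e_2]_s$.
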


\begin{corollary}
Let  $\bf{BP}$ be a complex $3$-dimensional binary  $\mathfrak{perm}$ superalgebra of type $(2,1)$. Then  $\bf{BP}$ is a $\mathfrak{perm}$ superalgebra.

    

\end{corollary}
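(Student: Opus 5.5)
By the chain of inclusions above, every binary $\mathfrak{perm}$ superalgebra is right alternative. Indeed, the first identity in its definition contains the right alternative super-identity. So a nontrivial complex $3$-dimensional binary $\mathfrak{perm}$ superalgebra $\bf{BP}$ of type $(2,1)$ is isomorphic to one of ${\bf R}_{01},\ldots,{\bf R}_{39}$ of Theorem A2. The trivial superalgebra is obviously $\mathfrak{perm}$. The plan is to run through this finite list and show that every member satisfying the binary $\mathfrak{perm}$ identities already lies in the $\mathfrak{perm}$ list. That list consists of the associative commutative superalgebras ${\bf R}_{01},{\bf R}_{02},{\bf R}_{09},{\bf R}_{15},{\bf R}_{17},{\bf R}_{21},{\bf R}_{37}$ together with ${\bf R}_{04},{\bf R}_{10},{\bf R}_{11},{\bf R}_{20},{\bf R}_{25},{\bf R}_{31},{\bf R}_{33},{\bf R}_{38},{\bf R}_{39}$, as recorded in the preceding corollaries.

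I would first filter by the left super-alternative identity $(x,y,z)=-(-1)^{|x||y|}(y,x,z)$. Since the odd part is spanned by the single element $f_1$, it suffices to evaluate associators on basis triples. In particular I would check $(e_i,e_i,z)=0$, the symmetric combination for $e_1,e_2$, the mixed triples with one $f_1$, and triples with two copies of $f_1$. In the last case, super-skew-symmetry in the first two arguments gives $(f_1,f_1,z)=(f_1,f_1,z)$, which is no condition, but the mixed triples do give conditions. For each remaining algebra ${\bf R}_{i}$ not in the $\mathfrak{perm}$ list, I would exhibit one explicit violating triple. For example, for ${\bf R}_{05}$ ($e_1f_1=f_1$ but $f_1e_1=0$) I would compare $(e_1,f_1,e_1)$ with $(f_1,e_1,e_1)$. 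Similar small witnesses should dispose of ${\bf R}_{06}$, ${\bf R}_{07}$, ${\bf R}_{08}$, ${\bf R}_{13}$, ${\bf R}_{14}$, ${\bf R}_{16}$, ${\bf R}_{18}$, ${\bf R}_{19}$, ${\bf R}_{22}$–${\bf R}_{24}$, ${\bf R}_{26}$–${\bf R}_{30}$, ${\bf R}_{32}$, and ${\bf R}_{34}$–${\bf R}_{36}$. These are typically triples involving $e_1$ acting asymmetrically on one side of $e_2$ or $f_1$, or involving $f_1f_1\neq 0$ combined with a one-sided action.

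Any algebra surviving the left alternativity test would then be checked against the second, cyclic-type identity $(xy)z+(-1)^{|x||y|+|x||z|+|y||z|}(zy)x=(-1)^{|y||z|}(xz)y+(-1)^{|x||z|+|y||z|}(zx)y$, again on basis triples. The expectation is that nothing outside the $\mathfrak{perm}$ list survives both tests. Conversely, for the algebras in the $\mathfrak{perm}$ list nothing needs to be checked, since membership in the $\mathfrak{perm}$ variety is already established by the previous corollary.

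The main obstacle is purely computational: the sign bookkeeping for triples containing $f_1$ twice, together with finding, for each of the roughly twenty excluded algebras, a single concise witness triple. I would organise the computation by the Jordan superalgebra ${\bf R}^{+}={\bf J}_{k}$ from the proof of Theorem A2. Within each ${\bf J}_k$ family the algebras differ only in the cocycle $\theta$, so one parametric computation of the binary $\mathfrak{perm}$ identities on $\theta$ handles the whole family at once. Concretely, I would impose the identities on the general $\theta\in{\rm Z}^2({\bf J}_k,{\bf J}_k)$ and read off the admissible orbits.
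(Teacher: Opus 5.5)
Your route is the one the paper uses implicitly: reduce via Theorem A2 to the finite list and test the two defining identities on basis triples. However, the step you apply first cannot do what you ask of it, and this leaves a gap.

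The first binary $\mathfrak{perm}$ identity is exactly the binary associative identity. It holds trivially in any associative superalgebra, because all associators vanish. By the paper's own corollaries, $\mathbf{R}_{03},\mathbf{R}_{05},\mathbf{R}_{07},\mathbf{R}_{13},\mathbf{R}_{16},\mathbf{R}_{18},\mathbf{R}_{22},\mathbf{R}_{23},\mathbf{R}_{26},\mathbf{R}_{29},\mathbf{R}_{32},\mathbf{R}_{36}$ are associative, and $\mathbf{R}_{28},\mathbf{R}_{30}$ are binary associative. So for these fourteen non-$\mathfrak{perm}$ algebras no triple violating left super-alternativity exists. Your worked example already shows this: in $\mathbf{R}_{05}$ one has $(e_1,f_1,e_1)=(e_1f_1)e_1-e_1(f_1e_1)=f_1e_1=0$ and $(f_1,e_1,e_1)=(f_1e_1)e_1-f_1(e_1e_1)=0$.

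The first identity removes only $\mathbf{R}_{06},\mathbf{R}_{08},\mathbf{R}_{12},\mathbf{R}_{14},\mathbf{R}_{19},\mathbf{R}_{24},\mathbf{R}_{27},\mathbf{R}_{34},\mathbf{R}_{35}$. Also, $\mathbf{R}_{03}$ and $\mathbf{R}_{12}$ are missing from your list altogether. So the whole content of the corollary rests on the second identity, which you leave as an ``expectation''. As written, the proposal does not prove the statement.

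The gap is easy to close.
\begin{itemize}
\item Take $x=y=e$ for an even idempotent $e$. All signs are $+$, and the second identity reduces to $ez=(ez)e$.
\item With $e=e_1$ and $z=f_1$, this fails in $\mathbf{R}_{05},\mathbf{R}_{13},\mathbf{R}_{18},\mathbf{R}_{26},\mathbf{R}_{28}$, where $e_1f_1=f_1$ and $f_1e_1=0$.
\item With $e=e_1$ and $z=e_2$, it fails in $\mathbf{R}_{23},\mathbf{R}_{29},\mathbf{R}_{30},\mathbf{R}_{32},\mathbf{R}_{36}$, where $e_1e_2=e_2$ and $e_2e_1=0$.
\item With $e=e_2$ and $z=f_1$, it fails in $\mathbf{R}_{07}$.
\item For $\mathbf{R}_{03},\mathbf{R}_{16},\mathbf{R}_{22}$, take $x=e_1$ and $y=z=f_1$. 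The identity reads $(e_1f_1)f_1-(f_1f_1)e_1=-(e_1f_1)f_1-(f_1e_1)f_1$. In these algebras $e_1f_1=f_1e_1=f_1$ and $(f_1f_1)e_1=f_1f_1\neq 0$, so it becomes $0=-2f_1f_1$, which is false.
\end{itemize}

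The other nine algebras are removed by the first identity. For example, in $\mathbf{R}_{34}$ left super-alternativity would force $(e_2,f_1,f_1)=-(f_1,e_2,f_1)=0$, but $(e_2,f_1,f_1)=-e_2$. With these witnesses your case check becomes a complete proof.
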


\subsubsection{Associative and binary associative superalgebras}

\begin{definition}[see \cite{KS}]
A  superalgebra is called a binary associative (=alternative) superalgebra if the following identities hold
\begin{longtable}{c}
$\left( x,y,z\right) \ = \ -\left( -1\right) ^{\left\vert y\right\vert \left\vert
z\right\vert }\left( x,z,y\right) \ = \ 
-\left( -1\right) ^{\left\vert x\right\vert \left\vert y \right\vert }\left( y,x,z\right).$\\
\end{longtable}
\end{definition}

\begin{corollary}
Let $\rm{A}$ be a complex $3$-dimensional associative superalgebra of type $(1,2)$. Then 
$\rm{A}$ is a $\mathfrak{perm}$ superalgebra or it is isomorphic to ${\rm R}_{07}^{0},$
  ${\rm R}_{11},$ ${\rm R}_{13},$ 
  ${\rm R}_{18},$  ${\rm R}_{20},$ or ${\rm R}_{23}.$ 

\end{corollary}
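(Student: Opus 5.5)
Every associative superalgebra is right alternative: if all associators vanish, the super-identity holds trivially. So by Theorem A1 a $3$-dimensional associative superalgebra of type $(1,2)$ is trivial or isomorphic to one of ${\rm R}_{01},\dots,{\rm R}_{28}$. The plan is therefore to go through that list and keep exactly the members with $(x,y,z)=0$ for all basis elements $x,y,z\in\{e_1,f_1,f_2\}$. Since the associator is trilinear and each multiplication table has few nonzero products, this is a finite check, at most $27$ triples per algebra. For the parametric families ${\rm R}_{01}^{\alpha}$, ${\rm R}_{04}^{\alpha}$, ${\rm R}_{06}^{\alpha}$ and ${\rm R}_{07}^{\alpha}$, the associators are polynomial in $\alpha$, and we find the parameter values at which they all vanish.

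The main computational filtering uses a few convenient associators.

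\begin{itemize}
\item In ${\rm R}_{01}^{\alpha}$: $(f_1e_1)f_1=(1-\alpha)f_2f_1=-\frac{4(1-\alpha)}{\alpha+1}e_1$, while $f_1(e_1f_1)=(1+\alpha)f_1f_2=2(\alpha-1)e_1$. The same approach applied to $(e_1,f_1,f_1)$ and similar triples should rule out every $\alpha$.
\item In ${\rm R}_{07}^{\alpha}$: $(f_1f_1)f_1=e_1f_1=(1+\alpha)f_2$ and $f_1(f_1f_1)=f_1e_1=(1-\alpha)f_2$, which forces $\alpha=0$. We then check that ${\rm R}_{07}^{0}$ is associative.
\item ${\rm R}_{04}^{\alpha}$ and ${\rm R}_{06}^{\alpha}$: every product of three elements is zero (the algebra is $2$-step nilpotent), so these families are associative for all $\alpha$.
\item The unital-type algebras ${\rm R}_{09},\dots,{\rm R}_{24}$, built from the idempotent $e_1$: check triples such as $(f_1,e_1,e_1)$, $(e_1,f_1,e_1)$, $(e_1,e_1,f_i)$ and, where relevant, $(f_1,f_1,f_1)$. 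For example, in ${\rm R}_{10}$ we have $(e_1f_1)e_1=f_2e_1=0$ but $e_1(f_1e_1)=e_1f_1-e_1f_2=f_2$, so ${\rm R}_{10}$ is excluded.
\end{itemize}

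This leaves the list of associative ones. We then separate out the $\mathfrak{perm}$ superalgebras using the extra identity $x(yz)=(-1)^{|y||z|}x(zy)$, i.e. by comparing against the list in the preceding Corollary on $\mathfrak{perm}$ superalgebras together with the associative commutative ones. We only need to confirm that each remaining associative algebra violates the $\mathfrak{perm}$ identity and is not in that list. For instance:

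\begin{itemize}
\item in ${\rm R}_{11}$: $e_1(e_1f_1)=f_1$ but $e_1(f_1e_1)=0$;
\item in ${\rm R}_{07}^{0}$: $f_1(e_1f_1)=f_1f_2=0$ while $f_1(f_1e_1)=0$, so here we would instead test the triple $(e_1,f_1,f_1)$ and others until a violation appears.
\end{itemize}

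The main obstacle is the bookkeeping rather than any concept. We must correctly carry the signs $(-1)^{|y||z|}$, which only matter for the $\mathfrak{perm}$ test, not for associativity. We must also make sure that no member of Theorem A1 is missed, in particular ${\rm R}_{13},\,{\rm R}_{18},\,{\rm R}_{20},\,{\rm R}_{23}$ versus their near relatives ${\rm R}_{14},\,{\rm R}_{16},\,{\rm R}_{21},\,{\rm R}_{22}$. To guard against errors, for each excluded algebra we record one explicit nonzero associator, and for each included algebra we verify associativity on all triples involving its nonzero products, which reduces to a handful of checks since $e_1$ acts as an idempotent on one or both sides.
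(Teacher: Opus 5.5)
Your proposal is correct and follows essentially the paper's route, which gives no separate argument and reads the corollary off Theorem A1: keep the members with vanishing associators, then separate out the $\mathfrak{perm}$ ones (for ${\rm R}_{01}^{\alpha}$ the triple $(e_1,f_1,f_1)$ alone gives associator $-4e_1$ for every $\alpha$). Two small points. First, the statement as written needs the associative members outside the six exceptions to satisfy the $\mathfrak{perm}$ identity; checking that the six violate it only shows the list is sharp. Second, for ${\rm R}_{07}^{0}$ the triple $(e_1,f_1,f_1)$ detects nothing because $e_1e_1=0$; the violation is at $x=y=z=f_1$, where $f_1(f_1f_1)=f_1e_1=f_2$ would have to equal $-f_1(f_1f_1)$.
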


\begin{corollary}
Let $\rm{BA}$ be a complex $3$-dimensional binary associative superalgebra of type $(1,2)$. Then 
it is an associative superalgebra or it is isomorphic to ${\rm R}_{01}^{\pm \bf i\sqrt{3}},$ 
 ${\rm R}_{07}^{\alpha\neq 0},$ or ${\rm R}_{28}.$

\end{corollary}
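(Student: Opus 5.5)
Every binary associative superalgebra satisfies the right alternative super-identity, so a complex $3$-dimensional binary associative superalgebra ${\rm BA}$ of type $(1,2)$ is either trivial or isomorphic to one of ${\rm R}_{01}^{\alpha},\ldots,{\rm R}_{28}$ from Theorem A1. The plan is to test, for each algebra in that list, the remaining defining identity
\[
(x,y,z)=-(-1)^{|x||y|}(y,x,z).
\]
We then discard those that are associative; by the previous corollary these are the $\mathfrak{perm}$ superalgebras together with ${\rm R}_{07}^{0},{\rm R}_{11},{\rm R}_{13},{\rm R}_{18},{\rm R}_{20},{\rm R}_{23}$. The superassociator is trilinear, and both sides of the identity change by the same sign under the relevant parity rules. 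Hence it suffices to check the identity on all ordered triples of basis elements $e_1,f_1,f_2$, and in fact only on pairs $\{x,y\}$ up to the swap $x\leftrightarrow y$.

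Concretely, for each ${\rm R}_i$ I first compute the table of nonzero associators $(u,v,w)$ with $u,v,w\in\{e_1,f_1,f_2\}$. Since the product is short, most associators vanish, and the computation is quick. Then I check the symmetry condition in the first two arguments. For two even arguments or mixed parity it is skew-symmetry; for $x,y$ both odd it becomes $(x,y,z)=(y,x,z)$, so $(x,x,z)$ is unconstrained.

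For algebras with a unit-like idempotent, namely ${\rm R}_{09}$--${\rm R}_{24}$, I expect one-sided module actions to violate the left condition. A typical failing instance is $(e_1,f_1,e_1)\neq -(f_1,e_1,e_1)$ whenever $e_1$ acts differently on the two sides of $f_1$. This should eliminate the nonassociative ones quickly: ${\rm R}_{10}$, ${\rm R}_{12}$, ${\rm R}_{14}$, ${\rm R}_{16}$, ${\rm R}_{21}$, and so on. For the nilpotent-type algebras ${\rm R}_{02},{\rm R}_{03},{\rm R}_{05},{\rm R}_{08},{\rm R}_{25}$--${\rm R}_{28}$, only a handful of associators can be nonzero, such as $(f_1,f_1,f_1)$, $(e_1,f_1,f_1)$ and $(f_1,e_1,f_1)$. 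I check these individually and expect only ${\rm R}_{28}$ to survive among the non-associative ones.

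The main obstacle is the parametric families ${\rm R}_{01}^{\alpha}$ and ${\rm R}_{07}^{\alpha}$, where the identity yields polynomial conditions on $\alpha$. For ${\rm R}_{07}^{\alpha}$ the only nonzero associators involve $f_1$ and $e_1$; for example $(f_1,f_1,f_1)=2\alpha f_2$. I expect all these conditions to hold identically in $\alpha$, so that the whole family is binary associative, with associativity exactly at $\alpha=0$. For ${\rm R}_{01}^{\alpha}$ I compute $(e_1,f_1,f_2)$, $(f_1,e_1,f_2)$, $(f_1,f_2,f_1)$, $(f_2,f_1,f_1)$ and the like, which are rational functions of $\alpha$. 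The condition $(e_1,f_1,f_2)=-(f_1,e_1,f_2)$ should reduce, after clearing the denominator $\alpha+1$, to the quadratic $\alpha^2+3=0$, giving $\alpha=\pm{\bf i}\sqrt{3}$. I must then verify that the remaining associator conditions are consistent with these two roots. I would organize this by writing the associators in terms of the structure constants before substituting $\alpha$, to keep the algebra manageable. Finally, I check that ${\rm R}_{01}^{\pm{\bf i}\sqrt3}$, ${\rm R}_{07}^{\alpha\neq0}$ and ${\rm R}_{28}$ are non-associative, which follows from the nonzero associators already computed, e.g.\ $(f_1,f_1,f_1)\neq0$. This completes the list in the corollary.
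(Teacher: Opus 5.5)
Your approach is the one the paper uses: the corollary follows from Theorem A1 by testing $(x,y,z)=-(-1)^{|x||y|}(y,x,z)$ on basis triples of each listed algebra. Your final list is correct. Several of the computations you predict are wrong, however, and need fixing when you write the proof out.

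In ${\rm R}_{01}^{\alpha}$ the condition you single out is vacuous. Since $e_1e_1=e_1f_2=f_2f_2=0$, both $(e_1,f_1,f_2)$ and $(f_1,e_1,f_2)$ vanish for every $\alpha$. The constraint comes from $z=f_1$. There $(e_1,f_1,f_1)=-4e_1$ and $(f_1,e_1,f_1)=-\frac{2(\alpha-1)^2}{\alpha+1}e_1$. So $(e_1,f_1,f_1)=-(f_1,e_1,f_1)$ holds if and only if $(\alpha-1)^2=-2(\alpha+1)$, i.e. $\alpha^2+3=0$. The odd--odd condition $(f_1,f_2,f_1)=(f_2,f_1,f_1)$ reads $\frac{2(\alpha-1)^2}{\alpha+1}f_2=-4f_2$, which is the same equation. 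The only other nonzero associators are $(f_1,f_1,e_1)$ and $(f_1,f_1,f_2)$; their first two arguments are equal and odd, so they impose nothing. Also, $(f_1,f_1,f_1)=0$ in ${\rm R}_{01}^{\alpha}$ because $f_1f_1=0$. Non-associativity there is witnessed instead by $(e_1,f_1,f_1)=-4e_1\neq 0$.

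Your one-sided-action heuristic is also wrong. ${\rm R}_{09},{\rm R}_{11},{\rm R}_{20},{\rm R}_{22},{\rm R}_{23}$ have one-sided actions of $e_1$ and are associative. In ${\rm R}_{12}$ and ${\rm R}_{21}$, both $(e_1,f_1,e_1)$ and $(f_1,e_1,e_1)$ vanish, so your test triple does not detect them. The failing triples are as follows:
\begin{itemize}
\item ${\rm R}_{10}$: $(e_1,f_1,e_1)=-f_2$ versus $(f_1,e_1,e_1)=0$.
\item ${\rm R}_{14}$: $(e_1,f_2,e_1)=f_1$ versus $(f_2,e_1,e_1)=0$.
\item ${\rm R}_{12}$ and ${\rm R}_{21}$: $(e_1,f_1,f_1)=0$ versus $(f_1,e_1,f_1)=-e_1$.
\item ${\rm R}_{16}$: $(f_1,f_2,f_1)=0$ versus $(f_2,f_1,f_1)=-f_2$.
\item ${\rm R}_{02},{\rm R}_{03},{\rm R}_{08}$: the pair $\bigl((e_1,f_1,f_1),(f_1,e_1,f_1)\bigr)$ takes the values $(-4e_1,-2e_1)$, $(0,-2e_1)$ and $(0,2e_1)$ respectively.
\end{itemize}

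Because you commit to computing complete associator tables, these slips do not break the method. As written, though, the step meant to produce $\alpha=\pm{\bf i}\sqrt3$ cites an identity that holds for every $\alpha$, so it must be replaced by the $z=f_1$ computation above.
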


\begin{corollary}
Let $\bf{A}$ be a complex $3$-dimensional associative superalgebra of type $(2,1)$. Then it is a $\mathfrak{perm}$ superalgebra or it is isomorphic to 
\begin{center}
    ${\bf R}_{03},$  ${\bf R}_{05},$   ${\bf R}_{07},$   
${\bf R}_{13},$ ${\bf R}_{16},$ ${\bf R}_{18},$  ${\bf R}_{22},$ ${\bf R}_{23},$  ${\bf R}_{26},$ ${\bf R}_{29},$  ${\bf R}_{32},$ or  ${\bf R}_{36}.$ 
\end{center}

\end{corollary}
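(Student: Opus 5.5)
The statement is the last corollary: every complex $3$-dimensional associative superalgebra of type $(2,1)$ is either $\mathfrak{perm}$ or isomorphic to one of ${\bf R}_{03},{\bf R}_{05},{\bf R}_{07},{\bf R}_{13},{\bf R}_{16},{\bf R}_{18},{\bf R}_{22},{\bf R}_{23},{\bf R}_{26},{\bf R}_{29},{\bf R}_{32},{\bf R}_{36}$. The plan is a direct check against the list of Theorem A2.

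\textbf{Reduction to Theorem A2.} An associative superalgebra has zero associator, so the right alternative super-identity $(x,y,z)=-(-1)^{|y||z|}(x,z,y)$ holds trivially. Hence every $3$-dimensional associative superalgebra of type $(2,1)$ is a right alternative superalgebra. It is therefore isomorphic to one of ${\bf R}_{01},\dots,{\bf R}_{39}$, or it is trivial (and the trivial one is $\mathfrak{perm}$). So it suffices to go through the $39$ algebras of Theorem A2 and decide, for each, whether it is associative, and if so whether it is $\mathfrak{perm}$.

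\textbf{Step 1: associativity.} For each ${\bf R}_{i}$, compute $(x,y,z)$ on basis triples from $\{e_1,e_2,f_1\}$. The tables are sparse, mostly built from idempotents $e_1$ (and $e_2$) acting on one side. So only triples involving $e_1$ (and $f_1f_1$ when present) need attention, and each check is a handful of products. We then keep exactly the associative ones. Typical failures to detect:
\begin{itemize}
\item ${\bf R}_{25}$: $(e_2e_1)e_1=e_2$ and $e_2(e_1e_1)=e_2$, so this triple is fine, but one must still check the others.
\item ${\bf R}_{27}$: $(f_1f_1)f_1=e_1f_1=f_1$, while $f_1(f_1f_1)=f_1e_1=0$, so it is not associative.
\end{itemize}
In the same way ${\bf R}_{06},{\bf R}_{14},{\bf R}_{28},{\bf R}_{34},{\bf R}_{35}$ and similar cases should be discarded.

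\textbf{Step 2: the $\mathfrak{perm}$ identity.} On the associative survivors, test $x(yz)=(-1)^{|y||z|}x(zy)$. The $\mathfrak{perm}$ ones coincide with the list in the earlier $\mathfrak{perm}$ corollary for type $(2,1)$ together with the associative commutative list, so they can be matched directly. For each remaining associative algebra, exhibit one violating triple. For example, in ${\bf R}_{05}$ we have $e_1(e_1f_1)=f_1$ but $e_1(f_1e_1)=0$. This certifies that the algebra is associative but not $\mathfrak{perm}$, and the result should be exactly the twelve listed algebras.

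\textbf{Main obstacle.} The bottleneck is bookkeeping rather than ideas. We must verify associativity carefully for the algebras containing $f_1f_1=e_i$, since sign conventions for odd elements and triples such as $(f_1,f_1,f_1)$ or $(f_1,e_1,f_1)$ decide membership. I would organise the check by the Jordan type ${\bf J}_k$, because algebras with the same ${\bf R}^+$ share most products.
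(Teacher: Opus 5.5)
Your plan is correct and is exactly how this corollary is obtained in the paper: it is read off from the list of Theorem A2 by checking associativity and then the $\mathfrak{perm}$ identity algebra by algebra. Carrying this out, the non-associative ones are ${\bf R}_{06},{\bf R}_{08},{\bf R}_{12},{\bf R}_{14},{\bf R}_{19},{\bf R}_{24},{\bf R}_{27},{\bf R}_{28},{\bf R}_{30},{\bf R}_{34},{\bf R}_{35}$. Of the remaining $28$ associative ones, exactly the twelve listed violate the $\mathfrak{perm}$ identity; in particular ${\bf R}_{03},{\bf R}_{16},{\bf R}_{22}$ are excluded by the sign, since $x(f_1f_1)=-x(f_1f_1)$ forces $x(f_1f_1)=0$. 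One minor point: ${\bf R}_{25}$ should not appear under ``typical failures'', since it is associative and even $\mathfrak{perm}$.
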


\begin{corollary}
Let $\bf{BA}$ be a complex $3$-dimensional binary associative superalgebra of type $(2,1)$. Then 
it is an associative  superalgebra or it is isomorphic to   ${\bf R}_{28}$ or  ${\bf R}_{30}.$

\end{corollary}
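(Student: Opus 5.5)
By the inclusions recalled above, every binary associative superalgebra is right alternative. So a complex $3$-dimensional binary associative superalgebra $\bf{BA}$ of type $(2,1)$ is isomorphic to one of ${\bf R}_{01},\ldots,{\bf R}_{39}$ from Theorem A2. The task therefore reduces to a finite check: decide which of these $39$ superalgebras also satisfy the second defining identity of binary associative superalgebras, namely the super-left-alternative law
\[
(x,y,z)\ =\ -(-1)^{|x||y|}(y,x,z).
\]
This identity is trilinear, so it suffices to test it on triples of basis vectors $e_1,e_2,f_1$, with the sign determined by their parities.

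Next I remove everything already known to be associative. Every associative superalgebra trivially satisfies both alternative laws, since all associators vanish. The two corollaries on associative commutative superalgebras, on $\mathfrak{perm}$ superalgebras of type $(2,1)$, and on associative superalgebras of type $(2,1)$ together list the associative members of Theorem A2:
\begin{itemize}
\item[] ${\bf R}_{01},{\bf R}_{02},{\bf R}_{09},{\bf R}_{15},{\bf R}_{17},{\bf R}_{21},{\bf R}_{37}$;
\item[] ${\bf R}_{04},{\bf R}_{10},{\bf R}_{11},{\bf R}_{20},{\bf R}_{25},{\bf R}_{31},{\bf R}_{33},{\bf R}_{38},{\bf R}_{39}$;
\item[] ${\bf R}_{03},{\bf R}_{05},{\bf R}_{07},{\bf R}_{13},{\bf R}_{16},{\bf R}_{18},{\bf R}_{22},{\bf R}_{23},{\bf R}_{26},{\bf R}_{29},{\bf R}_{32},{\bf R}_{36}$.
\end{itemize}
The remaining, non-associative, candidates are
\[
{\bf R}_{06},\ {\bf R}_{08},\ {\bf R}_{12},\ {\bf R}_{14},\ {\bf R}_{19},\ {\bf R}_{24},\ {\bf R}_{27},\ {\bf R}_{28},\ {\bf R}_{30},\ {\bf R}_{34},\ {\bf R}_{35}.
\]
(If ${\bf R}_{12}$ or another algebra in this list turns out to be associative after all, it simply falls into the first alternative of the statement.)

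For each of these eleven algebras I compute the nonzero associators on basis triples, using the multiplication tables. Only a few products are nonzero, so only a handful of associators involving $e_1$, $f_1$, and at most one occurrence of $e_2$ can be nonzero.

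\emph{Exclusion.} For every algebra except ${\bf R}_{28}$ and ${\bf R}_{30}$, I exhibit one basis triple on which the super-left-alternative law fails. The natural triples to try first are $(e_1,f_1,f_1)$ against $(f_1,e_1,f_1)$, and $(f_1,f_1,e_1)$. For instance, in ${\bf R}_{06}$ we get $(e_1,f_1,f_1)=-e_1$, while $(f_1,e_1,f_1)=0$. The parity sign must be tracked carefully: for two odd arguments the law reads $(f_1,f_1,z)=(f_1,f_1,z)$, which is empty, so the witness has to mix parities.

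\emph{Verification.} For ${\bf R}_{28}$ and ${\bf R}_{30}$, I check all basis associators and confirm that the identity holds. The right-alternative law is already guaranteed by Theorem A2. I also point to a nonzero associator in each to confirm that they are not associative; for ${\bf R}_{30}$, for example, $(f_1,f_1,e_1)=e_2e_1-f_1f_1=-e_2\neq 0$.

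The main obstacle is purely bookkeeping. One has to get the super signs $(-1)^{|x||y|}$ right in each associator and be sure that no candidate is wrongly kept or discarded. I would organize the work as a small table of all nonzero associators for each candidate, and read both identities off that table.
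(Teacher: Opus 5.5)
Your proposal is correct and is exactly how the corollary follows in the paper, which states it without a separate argument as a direct check on Theorem A2: the eleven non-associative algebras are filtered through the super-left-alternative law, and only ${\bf R}_{28}$ and ${\bf R}_{30}$ survive. A few small slips should be fixed in the write-up.
\begin{itemize}
\item In ${\bf R}_{06}$ the values are $(e_1,f_1,f_1)=0$ and $(f_1,e_1,f_1)=-e_1$, not the reverse. The sum is still $-e_1\neq 0$, so the conclusion stands.
\item For ${\bf R}_{34}$, your suggested triples $(e_1,f_1,f_1)$, $(f_1,e_1,f_1)$, $(f_1,f_1,e_1)$ all vanish. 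The witness must be $(e_2,f_1,f_1)=-e_2$ against $(f_1,e_2,f_1)=0$.
\item Theorem A2 omits the zero-product superalgebra, but it is trivially associative, so it falls into the first alternative.
\end{itemize}
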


\subsubsection{$\big(-1,1\big)$- and binary $\big(-1,1\big)$-superalgebras}

\begin{definition}
A right-alternative superalgebra is called a $\big(-1,1\big)$-superalgebra if it is Lie-admissible, i.e., the following identity holds
\begin{longtable}{lcl}
$(x,y,z)+(-1)^{|x||y|+|x||z|}(y,z,x)+(-1)^{|z||y|+|x||z|}(z,x,y) $&$=$&$0.$
\end{longtable}
\end{definition}

\begin{definition}[see \cite{ser76bi}]
A right-alternative superalgebra is called a binary $\big(-1,1\big)$-superalgebra if it    satisfies 
 \begin{longtable}{lll}
$(wz,y,x)+(w,z,yx)+(-1)^{|x||y|+|x||z|+|y||z|} \big((wx,y,z)+(w,x,yz) \big)\ +$\\
\multicolumn{1}{r}{ $(-1)^{\big(|x|+|y|\big)\big(|z|+|w|\big) + |x||y|+|z||w|} (x,yz,w)+(-1)^{|x| \big(|y|+|z|+|w|\big) } (x,wz,y) $}&$+$\\
$(-1)^{|w||y|+|w||z|+|y||z|} \big( (yz,w,x)+(y,z,wx)\big)+(-1)^{\big(|x|+|y|\big)\big(|z|+|w|\big)} \big((yx,w,z)+(y,x,wz) \big)$&$+$\\
\multicolumn{1}{r}{ $(-1)^{|w| \big(|x|+|y|+|z|\big) }(z,yx,w)+
(-1)^{|x| |y|+|z||w| }(z,wx,y)$}&$=$&$0.$
\end{longtable}
\end{definition}

\begin{corollary}
Let $\rm{M}$ be a complex $3$-dimensional $\big(-1,1\big)$-superalgebra of type $(1,2)$. Then $\rm{M}$ is associative, or it is isomorphic to 
${\rm R}_{01}^{3},$ 
${\rm R}_{10},$ or ${\rm R}_{14}.$

\end{corollary}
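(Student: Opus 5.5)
Since every $(-1,1)$-superalgebra is right alternative by definition, Theorem A1 applies: $\rm{M}$ is isomorphic to one of ${\rm R}_{01},\dots,{\rm R}_{28}$, and it suffices to check, algebra by algebra, which members of that list satisfy the super Lie-admissibility identity
\[
(x,y,z)+(-1)^{|x||y|+|x||z|}(y,z,x)+(-1)^{|z||y|+|x||z|}(z,x,y)=0 .
\]
The previous corollaries already identify which members of the list are associative, namely the associative commutative, $\mathfrak{perm}$ and associative ones of type $(1,2)$. Every associative superalgebra has all associators zero and is therefore $(-1,1)$. So we only need to examine the non-associative algebras in the list: ${\rm R}_{01}^{\alpha}$, ${\rm R}_{02}$, ${\rm R}_{03}$, ${\rm R}_{07}^{\alpha\neq 0}$, ${\rm R}_{08}$, ${\rm R}_{10}$, ${\rm R}_{12}$, ${\rm R}_{14}$, ${\rm R}_{16}$, ${\rm R}_{21}$ and ${\rm R}_{28}$, together with any others not covered by the associative corollary.

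For each of these we compute the associators on basis triples; the multiplication tables are sparse, so only a few triples give nonzero values. The right alternative super-identity makes $(x,y,z)$ super-skew in its last two arguments. It follows that the cyclic sum above is super-alternating, so it suffices to evaluate it on the basis triples up to graded permutation: $(e_1,f_1,f_1)$, $(e_1,f_1,f_2)$, $(e_1,f_2,f_2)$, the purely odd triples $(f_i,f_j,f_k)$, and triples containing $e_1$ twice.

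For the one-parameter family ${\rm R}_{01}^{\alpha}$ the sum evaluated on $(e_1,f_1,f_2)$ and $(f_1,f_1,f_2)$ is a rational function of $\alpha$ times a basis vector. We solve for its vanishing, expecting the only non-associative solution to be $\alpha=3$. Similarly, we expect ${\rm R}_{10}$ and ${\rm R}_{14}$ to satisfy the identity, while the remaining candidates should each produce an explicit basis triple on which the sum is nonzero. For example, in ${\rm R}_{07}^{\alpha}$ and ${\rm R}_{28}$ the relation $f_1f_1=e_1$ combined with $e_1f_1=(1+\alpha)f_2$ should give a nonzero value on $(f_1,f_1,f_1)$ or $(e_1,f_1,f_1)$. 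In ${\rm R}_{12}$, ${\rm R}_{16}$ and ${\rm R}_{21}$ the relation $f_1f_1=e_1$ should likewise break the identity on $(f_1,f_1,f_1)$.

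The main obstacle is bookkeeping of the signs in the super-cyclic sum on odd triples, and the parametric computation for ${\rm R}_{01}^{\alpha}$. For the latter we would first compute $(f_1,f_1,f_2)$ and $(f_1,f_2,f_1)$ symbolically, then impose the cyclic relation, and finally cross-check that the surviving value $\alpha=3$ is consistent with the binary associative corollary, which singles out $\alpha=\pm{\bf i}\sqrt3$ by a different identity.
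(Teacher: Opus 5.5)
Your reduction is sound, and it is the same direct check against Theorem A1 that the paper implicitly relies on; no separate argument is written there. The cyclic sum $S$ is super-alternating for right alternative superalgebras, and your list of non-associative candidates is correct. Moreover, ${\rm R}_{10}$ and ${\rm R}_{14}$ do satisfy the identity: in both, all odd--odd products vanish, so every associator with two or more odd entries is $0$.

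The gap is in the step you only anticipated, for ${\rm R}_{01}^{\alpha}$. Take $e_1f_1=(1+\alpha)f_2$, $f_1e_1=(1-\alpha)f_2$, $f_1f_2=\frac{2(\alpha-1)}{\alpha+1}e_1$ and $f_2f_1=-\frac{4}{\alpha+1}e_1$. Up to graded permutation, the only basis triples on which $S$ is not identically zero give
\[
S(e_1,f_1,f_1)=\frac{4\alpha(\alpha-3)}{\alpha+1}\,e_1,\qquad S(f_1,f_1,f_2)=\frac{4\alpha(\alpha-3)}{\alpha+1}\,f_2 .
\]
The two factors are the coefficients of the supercommutators $[e_1,f_1]=2\alpha f_2$ and $[f_1,f_2]=\frac{2(\alpha-3)}{\alpha+1}e_1$. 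So the identity holds for $\alpha=0$ as well as for $\alpha=3$.

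Now consider ${\rm R}_{01}^{0}$, with $e_1f_1=f_1e_1=f_2$, $f_1f_2=-2e_1$, $f_2f_1=-4e_1$.
\begin{itemize}
\item It is not associative, since $(e_1,f_1,f_1)=f_2f_1=-4e_1$.
\item It is plainly Lie-admissible: its only nonzero supercommutator is $[f_1,f_2]=-6e_1$, and $e_1$ is central.
\item It is not isomorphic to ${\rm R}_{10}$ or ${\rm R}_{14}$, whose symmetrized superalgebras are ${\rm J}_{04}$ and ${\rm J}_{05}$ rather than ${\rm J}_{01}$.
\item It is not isomorphic to ${\rm R}_{01}^{3}$, where $[e_1,f_1]=6f_2\neq 0$.
\end{itemize}
So your plan, carried out correctly, does not prove the statement as written; it refutes it. The list should read ${\rm R}_{01}^{0}$, ${\rm R}_{01}^{3}$, ${\rm R}_{10}$, ${\rm R}_{14}$. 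This also conflicts with the later corollary that places ${\rm R}_{01}^{\alpha\notin\{-1,3\}}$ outside the $(-1,1)$ class.

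There is also a smaller slip in one of your witnesses. In ${\rm R}_{16}$ one has $(f_1,f_1,f_1)=e_1f_1-f_1e_1=0$, so that triple does not work. The failure appears instead at $S(f_1,f_1,f_2)=(f_2,f_1,f_1)=-f_2e_1=-f_2$. Your other witnesses are fine:
\begin{itemize}
\item ${\rm R}_{07}^{\alpha\neq 0}$ fails on $(f_1,f_1,f_1)$, giving $6\alpha f_2$;
\item ${\rm R}_{02}$, ${\rm R}_{12}$, ${\rm R}_{21}$ and ${\rm R}_{28}$ also fail on $(f_1,f_1,f_1)$;
\item ${\rm R}_{03}$ and ${\rm R}_{08}$ fail on $(e_1,f_1,f_1)$, giving $\pm 4e_1$.
\end{itemize}
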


\begin{corollary}
Let $\rm{BM}$ be a complex $3$-dimensional binary $\big(-1,1\big)$-superalgebra of type $(1,2)$. Then 
$\rm{BM}$ is a $\big(-1,1\big)$-superalgebra or it is isomorphic to 
${\rm R}_{01}^{\alpha \notin \{ -1; 3\}},$ ${\rm R}_{02},$ ${\rm R}_{03},$ 
${\rm R}_{08},$ 
${\rm R}_{12},$ or ${\rm R}_{28}.$

\end{corollary}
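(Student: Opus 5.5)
Since every binary $\big(-1,1\big)$-superalgebra is by definition right alternative, Theorem A1 tells us that $\rm{BM}$ is isomorphic to one of ${\rm R}_{01}^{\alpha},\ldots,{\rm R}_{28}$. The plan is to run through this finite list, together with its one-parameter families, and decide for each member whether it satisfies the binary $\big(-1,1\big)$ identity. The members that do satisfy it split into two groups. The first group consists of those that are already $\big(-1,1\big)$-superalgebras; by the preceding corollary these are the associative ones together with ${\rm R}_{01}^{3}$, ${\rm R}_{10}$ and ${\rm R}_{14}$. The second group consists of those that are binary $\big(-1,1\big)$ but not Lie-admissible, and these are the ones that must be listed.

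\textbf{Step 1 (Lie-admissibility).} We first record which algebras fail Lie-admissibility. By the previous corollaries, every algebra not listed as associative or as one of ${\rm R}_{01}^{3}$, ${\rm R}_{10}$, ${\rm R}_{14}$ fails. Comparing with the corollaries for associative and $\mathfrak{perm}$ superalgebras, the non-associative candidates are
\[
{\rm R}_{01}^{\alpha},\ {\rm R}_{02},\ {\rm R}_{03},\ {\rm R}_{07}^{\alpha\neq 0},\ {\rm R}_{08},\ {\rm R}_{10},\ {\rm R}_{12},\ {\rm R}_{14},\ {\rm R}_{16},\ {\rm R}_{21},\ {\rm R}_{28}.
\]

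\textbf{Step 2 (checking the identity).} For each candidate outside the $\big(-1,1\big)$ class, we evaluate the binary $\big(-1,1\big)$ identity on basis elements $x,y,z,w\in\{e_1,f_1,f_2\}$. This is a finite multilinear check, with at most $3^4$ quadruples per algebra and the sign factors determined by the parities. Grading cuts the work considerably. In type $(1,2)$ with the given multiplications, products of length $\geq 3$ mostly vanish, since $e_1$ is nilpotent in ${\rm R}_{01}$–${\rm R}_{08}$ and ${\rm R}_{27}$, ${\rm R}_{28}$. So only quadruples whose total degree can land in a nonzero associator need checking. For ${\rm R}_{01}^{\alpha}$ we track the associators as rational functions of $\alpha$. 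We expect the identity to hold identically in $\alpha$ while the Jacobi-type sum fails unless $\alpha=3$, which would give ${\rm R}_{01}^{\alpha\notin\{-1,3\}}$. Binary associative algebras such as ${\rm R}_{07}^{\alpha}$, ${\rm R}_{28}$ and ${\rm R}_{01}^{\pm {\bf i}\sqrt3}$ need separate handling, because they are not automatically binary $\big(-1,1\big)$. We will compute directly and expect ${\rm R}_{28}$ to survive while ${\rm R}_{07}^{\alpha\neq 0}$ fails.

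\textbf{Step 3 (exclusions).} For ${\rm R}_{16}$, ${\rm R}_{21}$ and the remaining unital-type algebras with $e_1e_1=e_1$, we exhibit one explicit quadruple violating the identity. Good first choices are $(w,x,y,z)=(f_1,e_1,f_1,f_1)$ or quadruples involving $f_2$ and $e_1$, because there an idempotent acting on an odd part yields a nonzero associator that is not cancelled.

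The main obstacle is Step 2. The twelve-term signed identity is error-prone by hand, especially the sign bookkeeping for odd elements and the $\alpha$-dependent family ${\rm R}_{01}^{\alpha}$. I would organize it by multidegree in $(e_1,f_1,f_2)$ and, where possible, verify the results with a computer algebra check.
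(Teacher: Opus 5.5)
\textbf{The gap is in Step 2: ${\rm R}_{07}^{\alpha\neq0}$ does not fail the identity, and once this is corrected your computation refutes the statement as printed.} Your plan is the same case check the paper relies on: go through Theorem A1, discard the $\big(-1,1\big)$-superalgebras, and test the rest against the binary identity. The paper gives no separate argument for this corollary. The trouble is your expectation that ${\rm R}_{07}^{\alpha\neq0}$ fails while ${\rm R}_{28}$ survives.

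Every term of the binary $\big(-1,1\big)$ identity is an associator with a product in one slot, so it is a combination of products of four elements. Write $A={\rm R}_{07}^{\alpha}$. Then $A^2=\langle e_1,f_2\rangle$, $A^3=\langle f_2\rangle$, and $A^2A^2=A^3A=AA^3=0$. So $A^4=0$ and the identity holds for every $\alpha$. This is exactly why it holds in ${\rm R}_{28}$, which has the same nilpotent pattern and is the limiting member $\alpha\to\infty$ of this family.

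On the other hand, $(f_1,f_1,f_1)=e_1f_1-f_1e_1=2\alpha f_2$. So the Lie-admissibility identity at $x=y=z=f_1$ equals $6\alpha f_2\neq 0$ for $\alpha\neq0$.

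Nor is ${\rm R}_{07}^{\alpha}$ isomorphic to anything on the list:
\begin{itemize}
\item ${\rm R}_{01}^{\alpha}$, ${\rm R}_{02}$, ${\rm R}_{03}$ and ${\rm R}_{08}$ are not nilpotent; for example, $(e_1f_1)f_1=-4e_1$ in ${\rm R}_{01}^{\alpha}$.
\item ${\rm R}_{12}$ has an idempotent.
\item In both ${\rm R}_{07}^{\alpha}$ and ${\rm R}_{28}$, $f_2$ spans the annihilator. So an isomorphism onto ${\rm R}_{28}$ has the form $e_1\mapsto\lambda e_1$, $f_1\mapsto af_1+bf_2$, $f_2\mapsto df_2$. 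Comparing $e_1f_1$ and $f_1e_1$ gives $(1+\alpha)d=\lambda a$ and $(1-\alpha)d=-\lambda a$, hence $d=0$.
\end{itemize}

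Thus ${\rm R}_{07}^{\alpha\neq0}$ is binary $\big(-1,1\big)$, is not $\big(-1,1\big)$, and is missing from the list. No correct execution of your plan yields the corollary as stated; the list must also contain ${\rm R}_{07}^{\alpha\neq0}$.

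This agrees with the paper's own binary associative corollary, which lists ${\rm R}_{07}^{\alpha\neq0}$. Super-alternative superalgebras satisfy the binary $\big(-1,1\big)$ super-identity: pass to the Grassmann envelope and use Artin's theorem. So your remark that binary associative algebras are ``not automatically'' binary $\big(-1,1\big)$ is also mistaken. The paper's next corollary declares ${\rm R}_{07}^{\alpha\neq-1}$ non-binary, which cannot be right either, since that set contains the associative ${\rm R}_{07}^{0}$.

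Two smaller corrections. First, the heuristic that products of length $\geq 3$ ``mostly vanish'' because $e_1^2=0$ is false for ${\rm R}_{01}^{\alpha}$, ${\rm R}_{02}$, ${\rm R}_{03}$ and ${\rm R}_{08}$, none of which is nilpotent. Those checks, including the claim that the identity holds identically in $\alpha$ for ${\rm R}_{01}^{\alpha}$, are genuine computations. The vacuous cases are precisely ${\rm R}_{07}^{\alpha}$ and ${\rm R}_{28}$.

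Second, in Step 3 your suggested quadruple $(f_1,e_1,f_1,f_1)$ cannot detect ${\rm R}_{16}$. Its only nonzero associator on basis vectors is $(f_2,f_1,f_1)=-f_2$, and $f_2$ is never produced from $e_1$ and $f_1$, so that quadruple gives $0$. Instead:
\begin{itemize}
\item For ${\rm R}_{16}$, take $(w,x,y,z)=(e_1,f_2,f_1,f_1)$; only the term $(x,wz,y)=(f_2,f_1,f_1)$ survives.
\item For ${\rm R}_{21}$, take $(w,x,y,z)=(f_2,e_1,f_1,f_1)$; only the term $(wx,y,z)=(f_2,f_1,f_1)$ survives.
\end{itemize}
Both give $\pm f_2\neq0$.
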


\begin{corollary}
Let $\rm{R}$ be a complex $3$-dimensional right alternative superalgebra of type $(1,2)$. Then 
$\rm{R}$ is a binary $\big(-1,1\big)$-superalgebra or it is isomorphic to 
${\rm R}_{07}^{\alpha \neq -1},$ 
${\rm R}_{16},$ or 
${\rm R}_{21}.$ 

\end{corollary}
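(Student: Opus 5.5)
By Theorem A1, every nontrivial $3$-dimensional right alternative superalgebra of type $(1,2)$ is isomorphic to exactly one algebra in the list ${\rm R}_{01}^{\alpha},\ldots,{\rm R}_{28}$. Both the right alternative identity and the binary $\big(-1,1\big)$ identity are multilinear graded identities, so membership in the binary $\big(-1,1\big)$ variety is invariant under isomorphism. The statement therefore reduces to a finite check: for each algebra in the list of Theorem A1, decide whether the identity defining binary $\big(-1,1\big)$-superalgebras holds, and show that it fails precisely for ${\rm R}_{07}^{\alpha\neq -1}$, ${\rm R}_{16}$ and ${\rm R}_{21}$.

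The first step is to remove everything already handled by the previous corollaries. The associative commutative, $\mathfrak{perm}$, binary $\mathfrak{perm}$, associative, binary associative, $\big(-1,1\big)$- and binary $\big(-1,1\big)$-superalgebras all lie in the binary $\big(-1,1\big)$ variety by the inclusion diagram. Binary associative superalgebras are binary $\big(-1,1\big)$, since every $2$-generated subalgebra is associative and hence Lie-admissible. Collecting these lists removes every algebra except ${\rm R}_{07}^{\alpha}$ for $\alpha\neq -1$ (note that ${\rm R}_{07}^{-1}$ and ${\rm R}_{07}^{0}$ already appear), ${\rm R}_{16}$ and ${\rm R}_{21}$. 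I would verify this bookkeeping carefully, including the parameter ranges for ${\rm R}_{01}^{\alpha}$, ${\rm R}_{04}^{\alpha}$, ${\rm R}_{06}^{\alpha}$ and ${\rm R}_{07}^{\alpha}$. This bookkeeping already proves the statement as written, because the statement only asserts that any exception lies in the displayed list.

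For sharpness I would also exhibit, for each remaining algebra, a homogeneous quadruple $(w,z,y,x)$ of basis elements on which the left side of the defining identity is nonzero. Since all three algebras are supported on few nonzero products, only a few associators are nonzero, so the quadruple can be found by hand.

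\begin{itemize}
\item For ${\rm R}_{07}^{\alpha}$ the natural choice is all arguments equal to $f_{1}$, or three copies of $f_{1}$ and one $e_{1}$. Here $f_{1}f_{1}=e_{1}$ and $e_{1}f_{1}$, $f_{1}e_{1}$ are multiples of $f_{2}$, so the surviving terms give a scalar multiple of $f_{2}$ proportional to $1+\alpha$. This would explain exactly why $\alpha=-1$ is excluded.
\item For ${\rm R}_{16}$ and ${\rm R}_{21}$ I would try quadruples built from $e_{1}$, $f_{1}$, $f_{2}$, using $f_{1}f_{1}=e_{1}$ together with the asymmetric action of $e_{1}$ on $f_{2}$.
\end{itemize}

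I expect the main obstacle to be the sign bookkeeping in the ten-term binary $\big(-1,1\big)$ identity when several arguments are odd. Cancellations are easy to miscompute, so I would tabulate the nonzero associators $(a,b,c)$ of each remaining algebra first and substitute afterwards.
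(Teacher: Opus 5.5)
Your reduction (Theorem A1, the earlier corollaries, and the trivial algebra, which you omit) does prove the statement as written. It is the same finite check over Theorem A1 that the paper implicitly relies on, since no separate proof is given there. However, two of your supporting claims are false.

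First, your reason for binary associative $\subset$ binary $\big(-1,1\big)$ is wrong for superalgebras. You argue that every $2$-generated subalgebra is associative, but ${\rm R}_{07}^{\alpha}$ with $\alpha\neq 0$ is generated by the single odd element $f_1$, is alternative, and has $(f_1,f_1,f_1)=e_1f_1-f_1e_1=2\alpha f_2\neq 0$. The paper's inclusion diagram is stated for algebras, so the super inclusion must come from elsewhere, e.g.\ the Grassmann envelope. Alternatively, you can check the identity directly on the few non-associative algebras in the earlier lists.

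Second, your bookkeeping and your sharpness plan are wrong about ${\rm R}_{07}^{\alpha}$. The binary $\mathfrak{perm}$ corollary already lists ${\rm R}_{07}^{\alpha}$ for every $\alpha$, and the binary associative one lists ${\rm R}_{07}^{\alpha\neq 0}$. So after removal only ${\rm R}_{16}$ and ${\rm R}_{21}$ survive.

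No witness for ${\rm R}_{07}^{\alpha}$ can exist. Its only nonzero associator is $(f_1,f_1,f_1)=2\alpha f_2$. Each of the twelve associators in the defining identity has one of the products $wz$, $yx$, $wx$, $yz$ in some slot. All products lie in $\langle e_1,f_2\rangle$, and every associator with an argument in $\langle e_1,f_2\rangle$ vanishes. Hence ${\rm R}_{07}^{\alpha}$ is binary $\big(-1,1\big)$ for all $\alpha$, and no factor $1+\alpha$ can appear. The displayed list is therefore not sharp. This is harmless for the disjunction, but your ``fails precisely'' claim cannot be proved.

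For ${\rm R}_{16}$ and ${\rm R}_{21}$ your idea does work. Take $(w,z,y,x)=(e_1,f_2,f_1,f_1)$. Every term vanishes except $(-1)^{|x||y|+|z||w|}(z,wx,y)=-(f_2,f_1,f_1)=f_2\neq 0$. This holds because in both algebras $e_1f_1=f_1$, $f_2e_1=f_2$, and $(f_2,f_1,f_1)=-f_2(f_1f_1)=-f_2$.
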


\begin{corollary}
Let $\bf{M}$ be a complex $3$-dimensional $\big(-1,1\big)$-superalgebra of type $(2,1)$. Then 
$\bf{M}$ is associative.

\end{corollary}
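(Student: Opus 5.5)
The claim is that every complex $3$-dimensional $\big(-1,1\big)$-superalgebra of type $(2,1)$ is associative. By Theorem A2, such a superalgebra is isomorphic to one of ${\bf R}_{01},\ldots,{\bf R}_{39}$ (or is trivial, hence associative). So the plan is a finite check. First, discard the algebras already known to be associative: the $\mathfrak{perm}$ ones and the associative commutative ones, listed in the corollaries on $\mathfrak{perm}$ superalgebras of type $(2,1)$, together with the extra associative ones ${\bf R}_{03},{\bf R}_{05},{\bf R}_{07},{\bf R}_{13},{\bf R}_{16},{\bf R}_{18},{\bf R}_{22},{\bf R}_{23},{\bf R}_{26},{\bf R}_{29},{\bf R}_{32},{\bf R}_{36}$. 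Then, for each remaining algebra, show that the Lie-admissibility identity fails. Equivalently, exhibit one triple of basis elements $x,y,z$ with
\[
(x,y,z)+(-1)^{|x||y|+|x||z|}(y,z,x)+(-1)^{|z||y|+|x||z|}(z,x,y)\neq 0 .
\]

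Comparing the full list with the associative ones, the leftovers are:
\begin{itemize}
\item ${\bf R}_{06},{\bf R}_{08},{\bf R}_{12},{\bf R}_{14}$;
\item ${\bf R}_{19},{\bf R}_{24},{\bf R}_{27},{\bf R}_{28},{\bf R}_{30}$;
\item ${\bf R}_{34},{\bf R}_{35}$;
\end{itemize}
plus any others not covered, such as ${\bf R}_{10},{\bf R}_{11}$ if they are not already perm. In each leftover algebra the failure of associativity comes from a few associators. These typically involve the odd element $f_1$ twice and an even idempotent once, for instance $(e_1,f_1,f_1)$, $(f_1,e_1,f_1)$ and $(f_1,f_1,e_1)$, or they involve $e_2$ via $f_1f_1=e_2$. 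So I would test the triple $(x,y,z)=(e_1,f_1,f_1)$ first, and also $(f_1,f_1,f_1)$ and triples containing $e_2$.

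As an illustration, take ${\bf R}_{06}$ ($e_1e_1=e_1$, $e_2e_2=e_2$, $e_1f_1=f_1$, $f_1f_1=e_1$). We have $(e_1f_1)f_1=e_1$ and $e_1(f_1f_1)=e_1$, so $(e_1,f_1,f_1)=0$. Next, $(f_1e_1)f_1=0$ and $f_1(e_1f_1)=e_1$, so $(f_1,e_1,f_1)=-e_1$. Finally, $(f_1f_1)e_1=e_1$ and $f_1(f_1e_1)=0$, so $(f_1,f_1,e_1)=e_1$. With signs $(-1)^{1+0}=-1$ and $(-1)^{0+0}=1$, the sum is $0+e_1+e_1=2e_1\neq0$. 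Hence ${\bf R}_{06}$ is not Lie-admissible. The same triple, or its analogue with $e_2$, should work for the other algebras containing a relation $f_1f_1=e_i$.

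The main obstacle is the algebras where $f_1f_1=0$ but the even part is non-associative-looking, e.g. ${\bf R}_{19}$-type ones. There I would test triples involving $e_2$ and $f_1$, computing the three associators and their signs directly from the multiplication table. If some leftover algebra happens to satisfy the identity, the statement would force it to be associative, so I would double-check associativity there. Either way, the conclusion is that no non-associative algebra in the list of Theorem A2 is a $\big(-1,1\big)$-superalgebra.
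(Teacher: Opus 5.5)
Your approach is correct and is essentially the paper's: the corollary comes from inspecting the list in Theorem A2. Your set of eleven non-associative leftovers (${\bf R}_{06},{\bf R}_{08},{\bf R}_{12},{\bf R}_{14},{\bf R}_{19},{\bf R}_{24},{\bf R}_{27},{\bf R}_{28},{\bf R}_{30},{\bf R}_{34},{\bf R}_{35}$) is exactly right, and ${\bf R}_{10},{\bf R}_{11}$ are indeed already $\mathfrak{perm}$.

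The checks you left unfinished do go through. The cyclic sum on $(e_1,f_1,f_1)$ is nonzero for every leftover except ${\bf R}_{34}$. In particular ${\bf R}_{19}$ is no obstacle: there $f_1f_1=e_2$ and the sum is $2e_2$. For ${\bf R}_{34}$ the triple $(e_2,f_1,f_1)$ gives $-e_2$.
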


\begin{corollary}
Let $\bf{BM}$ be a complex $3$-dimensional binary $\big(-1,1\big)$-superalgebra of type $(2,1)$. Then 
$\bf{BM}$ is binary associative, or it is isomorphic to 
${\bf R}_{06},$ 
${\bf R}_{08},$
${\bf R}_{12},$
${\bf R}_{14},$ or
${\bf R}_{19}.$ 

     

\end{corollary}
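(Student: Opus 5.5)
The plan is to check directly which superalgebras in the list of Theorem A2 satisfy the binary $\big(-1,1\big)$-identity, and then compare them with the binary associative ones found in the earlier corollary for type $(2,1)$. Theorem A2 lists every $3$-dimensional right alternative superalgebra of type $(2,1)$ up to isomorphism, and both properties are invariant under isomorphism. So it suffices to sort the representatives ${\bf R}_{01},\ldots,{\bf R}_{39}$ into three groups: the binary associative ones, the binary $\big(-1,1\big)$ ones that are not binary associative, and the rest. The earlier corollaries already tell us which are associative or binary associative: all the $\mathfrak{perm}$ ones, the associative list, and ${\bf R}_{28}$, ${\bf R}_{30}$. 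Every binary associative superalgebra is a binary $\big(-1,1\big)$-superalgebra, so those need no further checking. It remains to test the non-binary-associative representatives against the defining identity of binary $\big(-1,1\big)$-superalgebras. I expect these to be ${\bf R}_{06}$, ${\bf R}_{08}$, ${\bf R}_{12}$, ${\bf R}_{14}$, ${\bf R}_{19}$, ${\bf R}_{24}$, ${\bf R}_{27}$, ${\bf R}_{34}$, ${\bf R}_{35}$, and any others not yet covered.

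For each candidate, I will first compute the associator table on basis elements. Because the products are very sparse, only a few associators are nonzero, and they typically involve $f_1$ at least twice, for example $(f_1,f_1,e_i)$, $(f_1,e_i,f_1)$ and $(e_i,f_1,f_1)$. The identity is multilinear of degree four, so it is enough to evaluate it on basis quadruples $(w,z,y,x)$. Parity considerations cut the work down considerably. With only one odd basis vector, and with multiplication mostly landing in $e_1$, $e_2$ or $f_1$, the quadruples that can give a nonzero value are those with $f_1$ appearing an even number of times (two or four), or those with products that return to $f_1$. I will list these and substitute.

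For the five algebras ${\bf R}_{06}$, ${\bf R}_{08}$, ${\bf R}_{12}$, ${\bf R}_{14}$, ${\bf R}_{19}$ I expect every evaluation to vanish, which puts them in the binary $\big(-1,1\big)$ class. For the remaining candidates I will look for one explicit basis quadruple on which the identity fails; I would try $(w,z,y,x)=(f_1,f_1,e_1,e_1)$ and its permutations first. Together with the earlier corollary showing these five are not binary associative (they are neither associative nor ${\bf R}_{28}$, ${\bf R}_{30}$), this proves the statement.

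The main obstacle is the bookkeeping of the sign factors in the long ten-term identity for odd entries. The delicate cases are those where $f_1$ appears two or four times. A sign error there would wrongly include or exclude an algebra, so I would double-check each borderline case. For the positive cases I would also confirm the result by checking the identity for generic elements, treating the odd coordinate via a Grassmann envelope, rather than relying only on basis quadruples.
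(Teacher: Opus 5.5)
Your plan is correct and matches the paper. There the corollary is read off from the list of Theorem A2: the binary associative representatives are set aside using the preceding corollaries, and the binary $\big(-1,1\big)$-identity is checked directly on ${\bf R}_{06},{\bf R}_{08},{\bf R}_{12},{\bf R}_{14},{\bf R}_{19},{\bf R}_{24},{\bf R}_{27},{\bf R}_{34},{\bf R}_{35}$.

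One practical caution. Quadruples built only from $e_1,f_1$, such as your first choice $(f_1,f_1,e_1,e_1)$ and its permutations, do detect the failure for ${\bf R}_{24}$ and ${\bf R}_{35}$. They give zero for ${\bf R}_{27}$ and ${\bf R}_{34}$, whose $\langle e_1,f_1\rangle$-part behaves like ${\bf R}_{14}$, respectively is associative; there the obstruction comes from $(e_2,f_1,f_1)=-e_2$. So for those two you need $e_2$ in the quadruple: for example, $(w,z,y,x)=(e_1,e_2,f_1,f_1)$ gives $e_2\neq 0$ in both.
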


\begin{corollary}
Let $\bf{R}$ be a complex $3$-dimensional right alternative superalgebra of type $(2,1)$. Then 
$\bf{R}$ is a binary $\big(-1,1\big)$-superalgebra or it is isomorphic to 
${\bf R}_{24},$ 
${\bf R}_{27},$ 
${\bf R}_{34},$ or 
${\bf R}_{35}.$ 
\end{corollary}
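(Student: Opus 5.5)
The plan is to combine Theorem A2 with the previous corollaries, so that only a small residual list has to be checked directly against the defining identity of binary $\big(-1,1\big)$-superalgebras. By Theorem A2, every nontrivial $3$-dimensional right alternative superalgebra of type $(2,1)$ is isomorphic to one of ${\bf R}_{01},\ldots,{\bf R}_{39}$. The trivial superalgebra satisfies every identity, so it suffices to go through this list. We use the chain of inclusions displayed before the corollaries:
\begin{center}
AssCom $\subset$ $\mathfrak{perm}$ $\subset$ associative $\subset$ binary associative $\subset$ binary $\big(-1,1\big)$.
\end{center}
Every algebra already identified in the preceding corollaries as associative commutative, $\mathfrak{perm}$, associative or binary associative is therefore binary $\big(-1,1\big)$. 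The algebras listed in the corollary on binary $\big(-1,1\big)$-superalgebras of type $(2,1)$ are binary $\big(-1,1\big)$ by that corollary itself.

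Collecting these lists gives the following groups:
\begin{itemize}
\item AssCom: ${\bf R}_{01},{\bf R}_{02},{\bf R}_{09},{\bf R}_{15},{\bf R}_{17},{\bf R}_{21},{\bf R}_{37}$;
\item $\mathfrak{perm}$: ${\bf R}_{04},{\bf R}_{10},{\bf R}_{11},{\bf R}_{20},{\bf R}_{25},{\bf R}_{31},{\bf R}_{33},{\bf R}_{38},{\bf R}_{39}$;
\item associative: ${\bf R}_{03},{\bf R}_{05},{\bf R}_{07},{\bf R}_{13},{\bf R}_{16},{\bf R}_{18},{\bf R}_{22},{\bf R}_{23},{\bf R}_{26},{\bf R}_{29},{\bf R}_{32},{\bf R}_{36}$;
\item binary associative: ${\bf R}_{28},{\bf R}_{30}$;
\item binary $\big(-1,1\big)$: ${\bf R}_{06},{\bf R}_{08},{\bf R}_{12},{\bf R}_{14},{\bf R}_{19}$.
\end{itemize}
Removing all of these from the $39$ algebras leaves exactly ${\bf R}_{24},{\bf R}_{27},{\bf R}_{34},{\bf R}_{35}$. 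This already proves the stated dichotomy.

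To make the exceptions genuine, I would also check that these four algebras are not binary $\big(-1,1\big)$-superalgebras. The earlier corollaries are phrased as ``is $X$ or isomorphic to one of \ldots'', so they do not by themselves exclude membership in the smaller classes. The defining identity is multilinear in $w,x,y,z$, so it suffices to find basis elements $w,x,y,z\in\{e_1,e_2,f_1\}$ at which it fails. Each of the four algebras mixes a nonzero square $f_1f_1$ (equal to $e_1$ or $e_2$) with a one-sided action of $e_1$ on $e_2$. For instance, in ${\bf R}_{24}$ we have $e_1e_2=e_2$ but $e_2e_1=0$, together with $f_1f_1=e_2$. I would therefore first test substitutions with two copies of $f_1$ and two even elements, such as $w=e_1$, $z=f_1$, $y=f_1$, $x=e_1$, and their permutations. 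I would compute the associators using the multiplication tables and read off the coefficient of $e_2$ (respectively $e_1$). The signs are controlled by the parities $|f_1|=1$ and $|e_i|=0$.

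The main obstacle is this last verification. The identity has ten associator terms with intricate signs, and the right witness $(w,x,y,z)$ must be found for each of the four algebras. I would first try the substitutions involving two $f_1$'s, since these are the only ones in which the exotic product $f_1f_1$ enters. If they happen to cancel, I would move to the substitution $w=x=y=z=e_1$, varied by replacing one $e_1$ with $e_2$, to exploit the asymmetry $e_1e_2\neq e_2e_1$ (or its reverse, in ${\bf R}_{27},{\bf R}_{34},{\bf R}_{35}$).
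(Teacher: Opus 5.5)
Your argument is correct and is essentially how the paper arrives at this corollary, which it states without separate proof: the nested corollaries for type $(2,1)$ distribute the $39$ algebras of Theorem A2 along the chain of classes, and the complement of the binary $\big(-1,1\big)$ ones is exactly ${\bf R}_{24},{\bf R}_{27},{\bf R}_{34},{\bf R}_{35}$. This needs two provisos: the earlier corollaries must be read as exact classifications, which is how the paper uses them in Section 2; and the inclusion of alternative superalgebras into binary $\big(-1,1\big)$-superalgebras, which the paper draws only for algebras, must be granted for ${\bf R}_{28},{\bf R}_{30}$, e.g.\ via the Grassmann envelope. Note, however, that you have the logic of those corollaries backwards: read literally (``$X$ implies $Y$ or one of the list''), they chain to precisely the exclusion of your four algebras from the binary $\big(-1,1\big)$ class, while it is the membership direction used for the dichotomy that depends on the classification reading; and if you do check the exclusions directly, all-even substitutions are useless because the even parts are associative, your $(w,x,y,z)=(e_1,e_1,f_1,f_1)$ works for ${\bf R}_{24}$ and ${\bf R}_{35}$, but for ${\bf R}_{27}$ and ${\bf R}_{34}$ the witness must involve $e_2$, e.g.\ $(e_2,f_1,f_1,e_1)$ and $(e_1,e_2,f_1,f_1)$ respectively, each giving $-e_2$.
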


\section{The geometric classification of superalgebras}

\subsection{Preliminaries: degenerations and geometric classification}

Let \( V = V_0 \oplus V_1 \) be a \( \mathbb{Z}_2 \)-graded vector space with a fixed homogeneous basis  
\( \big\{e_1, \ldots, e_m, f_1, \ldots, f_n \big\}\). 
A  superalgebra structure on \(V\) can be described via structure constants 
\((\alpha_{ij}^k, \beta_{ij}^k, \gamma_{ij}^k, \delta_{ij}^k) \in \mathbb{C}^{m^3+3mn^2}\), where the multiplication is defined as:
\[
e_i e_j = \sum_{k=1}^{m} \alpha_{ij}^k e_k, \quad 
e_i f_j = \sum_{k=1}^{n} \beta_{ij}^k f_k, \quad 
f_i e_j = \sum_{k=1}^{n} \gamma_{ij}^k f_k, \quad 
f_i f_j = \sum_{k=1}^{m} \delta_{ij}^k e_k.
\]
Let $\mathcal{S}^{m,n}$ denote the set of all superalgebras of dimension $(m,n)$ defined by a family of polynomial superidentities $T$, regarded as a subset $\mathbb{L}(T)$ of the affine variety $\operatorname{Hom}(V \otimes V, V)$. Then $\mathcal{S}^{m,n}$ is a Zariski-closed subset of the variety $\operatorname{Hom}(V \otimes V, V)$. 

The group $G = (\operatorname{Aut} V)_0 \simeq \operatorname{GL}(V_0) \oplus \operatorname{GL}(V_1)$ acts on $\mathcal{S}^{m,n}$ by conjugation:
\[
(g * \mu)(x \otimes y) = g \mu(g^{-1} x \otimes g^{-1} y),
\]
for all $x, y \in V$, $\mu \in \mathbb{L}(T)$, and $g \in G$.

Let $\mathcal{O}(\mu)$ denote the orbit of $\mu \in \mathbb{L}(T)$ under the action of $G$, and let $\overline{\mathcal{O}(\mu)}$ be the Zariski closure of $\mathcal{O}(\mu)$. Suppose $J, J' \in \mathcal{S}^{m,n}$ are represented by $\lambda, \mu \in \mathbb{L}(T)$, respectively. We say that $\lambda$ degenerates to $\mu$, denoted $\lambda \to \mu$, if $\mu \in \overline{\mathcal{O}(\lambda)}$. In this case, we have $\overline{\mathcal{O}(\mu)} \subset \overline{\mathcal{O}(\lambda)}$. Therefore, the notion of degeneration does not depend on the particular representatives, and we write $J \to J'$ instead of $\lambda \to \mu$, and $\mathcal{O}(J)$ instead of $\mathcal{O}(\lambda)$. 
We write $J \not\to J'$ to indicate that $J' \notin \overline{\mathcal{O}(J)}$.

If $J$ is represented by $\lambda \in \mathbb{L}(T)$, we say that $J$ is \textit{rigid} in $\mathbb{L}(T)$ if $\mathcal{O}(\lambda)$ is an open subset of $\mathbb{L}(T)$. A subset of a variety is called \textit{irreducible} if it cannot be written as a union of two proper closed subsets. A maximal irreducible closed subset is called an \textit{irreducible component}. In particular, $J$ is rigid in $\mathcal{S}^{m,n}$ if and only if $\overline{\mathcal{O}(\lambda)}$ is an irreducible component of $\mathbb{L}(T)$. It is a well-known fact that every affine variety admits a unique decomposition into finitely many irreducible components.


To find degenerations, we use the standard methods, described in \cite{GRH,als,ahk,BC99} and so on.
To prove a non-degeneration $J \not\to J',$ we use the standard argument from Lemma, whose proof is the same as the proof of   \cite[Lemma 1.5]{GRH}.

\begin{lemma}\label{gmain}
Let $\mathfrak{B}$ be a Borel subgroup of ${\rm GL}(\mathbb V)$ and ${\rm R}\subset \mathbb{L}(T)$ be a $\mathfrak{B}$-stable closed subset.
If $J  \to J'$ and  the superalgebra $J $ can be represented by a structure $\mu\in{\rm R}$, then there is $\lambda\in {\rm R}$ representing $J'$.
\end{lemma}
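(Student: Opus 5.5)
The plan follows the classical argument of Grunewald and O'Halloran \cite[Lemma 1.5]{GRH}, adapted to the graded setting. Here $\mathfrak{B}$ is understood as a Borel subgroup of the structure group $G=\mathrm{GL}(V_0)\oplus \mathrm{GL}(V_1)$ acting on $\mathbb{L}(T)$, so that $\mathfrak{B}$-stability of ${\rm R}$ makes sense. The key input is that $G/\mathfrak{B}$ is a projective (hence complete) variety. Since $J\to J'$, we have $\lambda'\in\overline{G\cdot\mu}$, where $\lambda'$ is some structure representing $J'$. Because ${\rm R}$ is $\mathfrak{B}$-stable and closed and $\mu\in{\rm R}$, we get $\overline{\mathfrak{B}\cdot\mu}\subset{\rm R}$. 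Hence it suffices to show
\[
\overline{G\cdot \mu}\subseteq G\cdot {\rm R}.
\]
Indeed, once this holds, $\lambda'=g\cdot\lambda$ for some $\lambda\in{\rm R}$ and $g\in G$. Then $\lambda=g^{-1}\cdot\lambda'$ lies in the orbit of $\lambda'$, so it represents $J'$.

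To prove the inclusion, I would show that $G\cdot{\rm R}$ is closed in $\mathbb{L}(T)$. Since it contains $G\cdot\mu$, it then contains the closure. Consider the closed subset
\[
Z=\{(g\mathfrak{B},x)\in G/\mathfrak{B}\times\mathbb{L}(T) : g^{-1}\cdot x\in{\rm R}\}.
\]
This set is well defined because ${\rm R}$ is $\mathfrak{B}$-stable. It is closed because its preimage in $G\times\mathbb{L}(T)$ is the closed set $\{(g,x): g^{-1}x\in{\rm R}\}$, and the quotient map $G\times\mathbb{L}(T)\to G/\mathfrak{B}\times\mathbb{L}(T)$ is open and surjective (it is a locally trivial fibration). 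The image of $Z$ under the projection to $\mathbb{L}(T)$ is exactly $G\cdot{\rm R}$.

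By completeness of $G/\mathfrak{B}$, the projection $G/\mathfrak{B}\times\mathbb{L}(T)\to\mathbb{L}(T)$ is a closed map. Therefore $G\cdot{\rm R}$ is closed.

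The main point requiring care is this properness step, i.e.\ that $G/\mathfrak{B}$ is complete. This is Borel's theorem for the connected reductive group $G$, which is a product of two general linear groups, so $G/\mathfrak{B}$ is a product of two flag varieties. The passage from $G\times\mathbb{L}(T)$ to $G/\mathfrak{B}\times\mathbb{L}(T)$ also needs a word: closedness descends along the open quotient map. Everything else is formal, and the $\mathbb{Z}_2$-grading plays no role beyond replacing $\mathrm{GL}(V)$ by $G$.
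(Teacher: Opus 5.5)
Your proof is correct and is essentially the argument the paper invokes by citing \cite[Lemma 1.5]{GRH}: $G\cdot{\rm R}$ is closed because $G/\mathfrak{B}$ is complete, so $\overline{\mathcal{O}(\mu)}\subseteq G\cdot{\rm R}$, and any representative of $J'$ can be moved into ${\rm R}$. Your reading of $\mathfrak{B}$ as a Borel subgroup of $\mathrm{GL}(V_0)\oplus\mathrm{GL}(V_1)$ is the appropriate one for the graded setting, since all of $\mathrm{GL}(\mathbb V)$ does not preserve $\mathbb{L}(T)$.
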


\subsection{The geometric classification}

\subsubsection{Associative commutative   superalgebras}

\begin{theorem}[see, \cite{ahk}]\label{asscomg}
The following statements are true:

\begin{enumerate}
    \item[$({\rm 1})$]
    The variety of associative commutative  superalgebras of type $(1,2)$ 
    has dimension $3,$ 
    four rigid superalgebras and 
    four irreducible components given by
    \begin{center}
$\mathcal{C}_1=\overline{\mathcal{O}({\rm R}_{04}^0)},$  
$\mathcal{C}_2=\overline{\mathcal{O}({\rm R}_{17})},$  
$\mathcal{C}_3=\overline{\mathcal{O}({\rm R}_{19})},$ and 
$\mathcal{C}_4=\overline{\mathcal{O}({\rm R}_{24})}.$  
    \end{center}
    \item[$({\rm 2})$]
    The variety of associative commutative  superalgebras of type $(2,1)$ 
    has dimension $4,$ 
    two  rigid superalgebras and 
    two irreducible components given by
      \begin{center}  
$\mathcal{C}_1=\overline{\mathcal{O}({\bf R}_{01})}$  and
$\mathcal{C}_2=\overline{\mathcal{O}({\bf R}_{02})}.$   
    \end{center}

\end{enumerate}
    
\end{theorem}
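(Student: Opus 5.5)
Although the theorem is quoted from \cite{ahk}, it can be re-derived directly from the algebraic lists above. The plan has four steps: compute the orbit dimensions of the finitely many associative commutative superalgebras listed in the two Corollaries, exhibit enough degenerations to show that every listed superalgebra lies in the closure of one of the claimed rigid ones, and prove the necessary non-degenerations between the candidates. The variety consists of finitely many orbits, since the Corollaries give no parametric families apart from the fixed members ${\rm R}_{04}^0$ and ${\rm R}_{06}^0$. Hence the irreducible components are exactly the closures of the orbits that are not in the closure of any other orbit. The dimension of the variety is the maximum of the corresponding orbit dimensions.

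For the orbit dimensions, we use $\dim \mathcal{O}(J)=\dim G-\dim \operatorname{Der}_0(J)$. Here $G={\rm GL}(V_0)\times{\rm GL}(V_1)$ has dimension $5$ in both types $(1,2)$ and $(2,1)$, and $\operatorname{Der}_0$ denotes the even derivations. Even derivations are computed easily from the multiplication tables. For example, ${\rm R}_{19}$ ($e_1e_1=e_1$) has derivations ${\rm gl}(V_1)$, so its orbit has dimension $1$. ${\rm R}_{24}$ has the same derivation algebra, so its orbit also has dimension $1$. ${\rm R}_{17}$ has derivations consisting of the diagonal odd part with $f_2\mapsto f_2$ and $f_1\mapsto f_1$ free, plus possibly a shift $f_2\mapsto f_1$; this is to be checked. ${\rm R}_{04}^0$ ($f_1f_2=e_1=-f_2f_1$) has $\operatorname{Der}_0\cong {\rm gl}_2$ acting with $e_1\mapsto \operatorname{tr}$, so its orbit has dimension $1$. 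These small numbers force most orbits to be small. It is therefore clear that the dimension count alone will not separate the components, and that the real content is the non-degeneration arguments.

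For the degenerations, we exhibit for each remaining superalgebra an explicit one-parameter family $g_t\in G$ with $g_t*\mu\to\mu'$. The first of these is ${\rm R}_{06}^0\in\overline{\mathcal{O}({\rm R}_{17})}$ in type $(1,2)$, realized by scaling $e_1$ by $t$ and renaming the odd generator. In type $(2,1)$ we need degenerations from ${\rm R}_{01}$ or ${\rm R}_{02}$ onto ${\bf R}_{09},{\bf R}_{15},{\bf R}_{17},{\bf R}_{21},{\bf R}_{37}$. An example is ${\bf R}_{01}\to{\bf R}_{17}$ via $e_1'=e_1+e_2$, $e_2'=te_2$, and similarly ${\bf R}_{02}\to{\bf R}_{21}$.

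For the non-degenerations, we use Lemma \ref{gmain} together with invariants that are preserved under degeneration. These invariants are upper semicontinuity of $\dim{\rm Ann}$, the dimensions of $J^2$ and $J_1^2$, and the lower semicontinuity of the dimension of $J^2\cap J_0$. They also include the fact that a degeneration cannot increase the number of orthogonal idempotents. For instance, ${\rm R}_{04}^0$ is the only candidate with $f_1f_2\neq0$, and any superalgebra degenerating to it must have $J_1J_1\neq0$. Among the rigid candidates, ${\rm R}_{19}$ and ${\rm R}_{24}$ both have trivial odd products, so they cannot degenerate to ${\rm R}_{04}^0$. Conversely, ${\rm R}_{04}^0$ is nilpotent, so it degenerates only to nilpotent superalgebras, while ${\rm R}_{19}$, ${\rm R}_{17}$ and ${\rm R}_{24}$ are not nilpotent. ${\rm R}_{19}$ and ${\rm R}_{24}$ have the same orbit dimension, so they cannot degenerate to each other.

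The main obstacle I expect is separating ${\rm R}_{17}$ from ${\rm R}_{24}$ and ${\rm R}_{19}$. This has to be done via Borel-stable conditions of the form ``$e_1V_1\subseteq\langle f_1\rangle$'', combined with the orbit-dimension inequality. In type $(2,1)$ the analogous obstacle is showing that ${\bf R}_{02}\notin\overline{\mathcal{O}({\bf R}_{01})}$. This follows because ${\bf R}_{01}$ has trivial action on $V_1$, a closed condition.
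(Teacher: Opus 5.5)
The paper gives no argument of its own here; the statement is imported from \cite{ahk}. Your self-contained route (finitely many orbits, $\dim\mathcal{O}=\dim G-\dim\operatorname{Der}_0$, explicit degenerations, closed invariants) is a legitimate alternative, and most of your individual claims check out. Three loose ends are routine:
\begin{itemize}
\item The shift $f_2\mapsto f_1$ is not a derivation of ${\rm R}_{17}$, because $e_1D(f_2)=f_1\neq 0=D(e_1f_2)$. Hence $\dim\operatorname{Der}_0({\rm R}_{17})=2$ and $\dim\mathcal{O}({\rm R}_{17})=3$. The claimed dimension $3$ of the variety rests on this, together with $\dim\mathcal{O}({\rm R}_{06}^0)=2$.
\item The degeneration ${\rm R}_{17}\to{\rm R}_{06}^0$ is realized by $E_1=te_1$, $F_1=f_1+f_2$, $F_2=tf_1$.
\item In type $(2,1)$, both ${\bf R}_{01}$ and ${\bf R}_{02}$ have $\operatorname{Der}_0$ equal to the scalars on $f_1$, so both orbits have dimension $4$. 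The remaining targets are reached by $(e_1,te_2,f_1)$ from ${\bf R}_{01}$ or ${\bf R}_{02}$, and by ${\bf R}_{01}\to{\bf R}_{37}$ via $E_1=t(e_1-e_2)$, $E_2=t^2(e_1+e_2)$, $F_1=f_1$.
\end{itemize}

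The genuine gap is the step you yourself flag as the main obstacle: your proposed tool does not separate ${\rm R}_{17}$ from ${\rm R}_{19}$. Since $\dim\mathcal{O}({\rm R}_{17})=3>1=\dim\mathcal{O}({\rm R}_{19})$, you need an actual obstruction, and none of your invariants gives one:
\begin{itemize}
\item $\dim\operatorname{Ann}$ goes from $1$ to $2$ and $\dim J^2$ from $2$ to $1$, both in the permitted direction.
\item $J_1J_1=0$ and $\dim(J^2\cap J_0)=1$ for both, both have a single nonzero idempotent, and neither is nilpotent.
\item ${\rm R}_{19}$ satisfies $e_1V_1=0\subseteq\langle f_1\rangle$, so your Borel-stable condition contains ${\rm R}_{19}$ too. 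It only excludes ${\rm R}_{24}$, where $L_{e_1}|_{V_1}$ has rank $2$.
\end{itemize}
As written, your argument cannot rule out ${\rm R}_{19}\in\overline{\mathcal{O}({\rm R}_{17})}$, so it cannot distinguish four components from three.

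The missing idea is a closed condition linking the coefficient in $e_1e_1$ to the eigenvalues of $L_{e_1}$ on $V_1$. Write $e_1e_1=c\,e_1$ and let $M$ be the matrix of $L_{e_1}|_{V_1}$. Any $(\lambda,A)\in{\rm GL}(V_0)\times{\rm GL}(V_1)$ multiplies $c$ and $\operatorname{tr}M$ by the same nonzero scalar, so $\{c=\operatorname{tr}M\}$ is $G$-stable and Zariski-closed. It contains ${\rm R}_{17}$ ($1=1+0$) but neither ${\rm R}_{19}$ ($1\neq 0$) nor ${\rm R}_{24}$ ($1\neq 2$), which gives both missing non-degenerations at once. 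A Borel-stable version is: $L_{e_1}$ preserves $\langle f_1\rangle$, acts on it by $c$, and acts by $0$ on $V_1/\langle f_1\rangle$. With this added, your outline does yield the statement.
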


\subsubsection{$\mathfrak{perm}$  superalgebras}

\begin{theorem}\label{pgeo1}
The variety of complex $3$-dimensional $\mathfrak{perm}$ superalgebras of type $(1,2)$  has 
dimension  $4$   and it has  $8$  irreducible components defined by  
\begin{center}
$\mathcal{C}_1=\overline{\mathcal{O}( {\rm R}_{04}^\alpha)},$ \
$\mathcal{C}_2=\overline{\mathcal{O}( {\rm R}_{06}^\alpha)},$ \
$\mathcal{C}_3=\overline{\mathcal{O}( {\rm R}_{09})},$ \
$\mathcal{C}_4=\overline{\mathcal{O}( {\rm N}_{15})},$ \\
$\mathcal{C}_5=\overline{\mathcal{O}( {\rm R}_{17})},$ \ 
$\mathcal{C}_6=\overline{\mathcal{O}( {\rm R}_{19})},$ \
$\mathcal{C}_7=\overline{\mathcal{O}( {\rm R}_{22})},$  and
$\mathcal{C}_8=\overline{\mathcal{O}( {\rm R}_{24})}.$ \

 \end{center}
In particular, there are only $6$ rigid superalgebras in this variety.
 
\end{theorem}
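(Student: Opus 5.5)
The proof rests on the algebraic list. By the corollaries on associative commutative and $\mathfrak{perm}$ superalgebras of type $(1,2)$, every $3$-dimensional $\mathfrak{perm}$ superalgebra of type $(1,2)$ is isomorphic to one of
\begin{center}
${\rm R}_{04}^{\alpha}$, ${\rm R}_{05}$, ${\rm R}_{06}^{\alpha}$, ${\rm R}_{09}$, ${\rm R}_{15}$, ${\rm R}_{17}$, ${\rm R}_{19}$, ${\rm R}_{22}$, ${\rm R}_{24}$, ${\rm R}_{25}$, ${\rm R}_{26}$, ${\rm R}_{27}$.
\end{center}
(I read ${\rm N}_{15}$ in the statement as a misprint for ${\rm R}_{15}$.) The plan has two parts. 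First, show that every algebra not named in the statement lies in the closure of one of the eight listed orbits or families. Second, show that none of the eight lies in the closure of another.

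\textbf{Dimensions and degenerations.} I would begin by computing ${\rm Der}$ for each algebra, which gives $\dim\mathcal{O}=\dim G-\dim{\rm Der}$ with $\dim G=1+4=5$. A family has closure of dimension one more than the orbit dimension of a generic member. I expect $\dim\overline{\mathcal{O}({\rm R}_{04}^\alpha)}=4$, which accounts for the stated dimension of the variety. The remaining algebras ${\rm R}_{05}$, ${\rm R}_{25}$, ${\rm R}_{26}$, ${\rm R}_{27}$ and the commutative ones, such as ${\rm R}_{04}^0$ and ${\rm R}_{06}^0$, should degenerate from the listed ones:
\begin{itemize}
\item ${\rm R}_{04}^\alpha\to{\rm R}_{05}$ as the semisimple-versus-nilpotent matrix parameter collapses; concretely, take the matrix $\begin{pmatrix}-\alpha&t\\0&\alpha\end{pmatrix}$ with $\alpha\to0$, which lies in the orbit of ${\rm R}_{04}^\alpha$ whenever $\alpha\neq0$.
\item ${\rm R}_{04}^{\alpha}\to{\rm R}_{25},{\rm R}_{26}$, using $\mathcal{O}(\eta_1)$ for ${\rm J}_{12}$ with a scaling $e_1\mapsto t^{-1}e_1$.
\item ${\rm R}_{06}^\alpha\to{\rm R}_{27}$, via $e_1\mapsto te_1$ together with a rescaling of $\alpha$.
\end{itemize}
I would verify each degeneration by an explicit parametrized basis change $E_i^t$, in the usual way.

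\textbf{Non-degenerations.} Among the listed components, non-degeneration from a lower-dimensional orbit to a higher one is automatic. For pairs of equal or larger dimension I would use invariants:
\begin{itemize}
\item the presence of an idempotent, i.e.\ $(\mathrm{A}^2)_0\neq0$ non-nilpotent, which separates ${\rm R}_{09},{\rm R}_{15},{\rm R}_{17},{\rm R}_{19},{\rm R}_{22},{\rm R}_{24}$ from the nilpotent families ${\rm R}_{04}^\alpha$, ${\rm R}_{06}^\alpha$;
\item the dimensions of the left and right annihilators and of $\mathrm{A}\mathrm{A}_1$, $\mathrm{A}_1\mathrm{A}$, which are upper semicontinuous;
\item identities such as $\mathrm{A}_1\mathrm{A}_1=0$, which kills ${\rm R}_{06}^\alpha$ versus ${\rm R}_{04}^\alpha$ in one direction.
\end{itemize}
In the other direction I would use $\mathrm{A}_0\mathrm{A}_1=0$. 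For the unital-type algebras I would use Lemma~\ref{gmain} with Borel-stable sets defined by conditions like $e_1e_1\in\langle e_1\rangle$ and $f_2e_1\in\langle f_2\rangle$, where each condition holds for the source but fails for any representative of the target.

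Finally, rigidity: the rigid algebras are the six single orbits ${\rm R}_{09}$, ${\rm R}_{15}$, ${\rm R}_{17}$, ${\rm R}_{19}$, ${\rm R}_{22}$, ${\rm R}_{24}$, while the two families give non-rigid components. The main obstacle is the idempotent-containing algebras of small orbit dimension, such as ${\rm R}_{19}$ (where ${\rm Der}$ is large), and showing that no family degenerates to them. Here I would argue that a nilpotent structure can only degenerate to nilpotent ones (the relevant set is closed), while the idempotent ones are pairwise separated by the Peirce-type invariants $\dim e\mathrm{A}_1$ and $\dim \mathrm{A}_1e$ for the idempotent $e$. These are constant on each orbit and are seen to be incompatible case by case.
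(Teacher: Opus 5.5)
\textbf{Overall.} Your plan follows the same architecture as the paper: the list of twelve algebras, orbit dimensions via derivations, the degenerations ${\rm R}_{04}^{\alpha}\to{\rm R}_{05},{\rm R}_{25},{\rm R}_{26}$ and ${\rm R}_{06}^{\alpha}\to{\rm R}_{27}$ (the paper realizes these by explicit basis changes, mostly with $\alpha$ depending on $t$, as you suggest), closedness of $\mathrm{A}_0\mathrm{A}_0=0$ to keep the two nilpotent families away from the unital-type algebras, and dimension counting.

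\textbf{The gap.} It sits at the step you yourself call the main obstacle, namely ${\rm R}_{09},{\rm R}_{15},{\rm R}_{17}\not\to{\rm R}_{19},{\rm R}_{22},{\rm R}_{24}$. You justify it by saying that $\dim e\mathrm{A}_1$ and $\dim\mathrm{A}_1e$ are constant on orbits, but that says nothing about orbit closures. These numbers are ranks, so they are only lower semicontinuous, and the exclusions you need are exactly rank drops. The pairs $(\dim e\mathrm{A}_1,\dim\mathrm{A}_1e)$ are $(0,1),(1,2),(1,1)$ for ${\rm R}_{09},{\rm R}_{15},{\rm R}_{17}$ and $(0,0),(0,2),(2,2)$ for ${\rm R}_{19},{\rm R}_{22},{\rm R}_{24}$. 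So semicontinuity alone does not rule out ${\rm R}_{09}\to{\rm R}_{19}$, ${\rm R}_{15}\to{\rm R}_{19}$, ${\rm R}_{15}\to{\rm R}_{22}$ or ${\rm R}_{17}\to{\rm R}_{19}$.

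\textbf{The missing idea.} $L_e$ and $R_e$ are idempotent operators on $\mathrm{A}_1$, because $(xe)e=x(ee)$ and $e(ex)=(ee)x$. Hence their ranks equal their traces, which are continuous. Concretely, the sets
\[
\{\operatorname{tr}R_{e_1}=k\,c_{11}^1,\ \operatorname{tr}L_{e_1}=l\,c_{11}^1\}
\]
are closed and $G$-stable, since both sides rescale by the same factor under $G$. Every representative of each of the six unital-type algebras has $c_{11}^1\neq0$ and lies in exactly one such set, and the six pairs $(l,k)$ are distinct. This is essentially what the paper's conditions relating $c_{11}^1$ to the diagonal constants $c_{12}^2,c_{13}^3,c_{21}^2,c_{31}^3$ encode. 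Your sample Borel conditions cannot do this job. The condition $e_1e_1\in\langle e_1\rangle$ is vacuous since $\dim\mathrm{A}_0=1$. The condition $f_2e_1\in\langle f_2\rangle$ holds in every basis of ${\rm R}_{22}$ and ${\rm R}_{24}$.

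\textbf{A smaller gap.} Your invariant list does not separate the $3$-dimensional orbit closures of ${\rm R}_{09},{\rm R}_{15},{\rm R}_{17}$ from the $3$-dimensional component $\overline{\mathcal{O}({\rm R}_{06}^\alpha)}$. Note also that $\dim\mathrm{A}\mathrm{A}_1$ and $\dim\mathrm{A}_1\mathrm{A}$ are lower, not upper, semicontinuous. No orbitwise invariant can do this, since ${\rm R}_{09}\to{\rm R}_{06}^{-1}$, ${\rm R}_{15}\to{\rm R}_{06}^{1}$ and ${\rm R}_{17}\to{\rm R}_{06}^{0}$ all genuinely occur. For ${\rm R}_{17}$, take $E_1=te_1$, $F_1=f_2+t^{-1}f_1$, $F_2=f_1$. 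What you need instead is a dimension argument. An irreducible $3$-dimensional orbit closure containing the irreducible $3$-dimensional family closure would coincide with it, forcing generic ${\rm R}_{06}^\alpha$ into a single orbit. Alternatively, use closed conditions that fail for generic $\alpha$: $L_{e_1}=0$ for ${\rm R}_{09}$, $R_{e_1}$ scalar for ${\rm R}_{15}$, and $L_{e_1}=R_{e_1}$ for ${\rm R}_{17}$.
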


\begin{proof}
Taking results from Theorem \ref{asscomg},  we have to mention only four new degenerations

\begin{center}
${\rm R}_{04}^{-\frac{t}{t+2}}  {\xrightarrow{ \big(    
\frac{2 t}{t+2}e_1,\ 
tf_1 ,\  
f_1+f_2 \big)}}  {\rm R}_{05};$ \ 
${\rm R}_{04}^{-\frac{t+2}{t}}  {\xrightarrow{ \big(    
-2 (t+2)e_1,\ 
(t+2)f_1,\  
tf_1+tf_2 \big)}} {\rm R}_{25};$ \\
${\rm R}_{04}^{-1}  {\xrightarrow{ \big(    
-2e_1,\ 
f_1+f_2,\  
tf_2 \big)}}  {\rm R}_{26};$  \
${\rm R}_{06}^{\frac{2}{t}-1}  {\xrightarrow{ \big(    
\frac{t}{2}e_1,\ 
f_1,\  
 f_2 \big)}}  {\rm R}_{27}.$ 
\end{center}
\noindent
All non-degenerations follow from the following observations.
\begin{enumerate}
    \item[$\bullet$] 
$ {\rm R}_{04}^{\alpha}   \not\to 
\big\{ {\rm R}_{06}^{\alpha},  {\rm R}_{09},  {\rm R}_{15},  {\rm R}_{17},  {\rm R}_{19},  {\rm R}_{22},  {\rm R}_{24} \big\},$ since 
$\big(({\rm R}_{04}^{\alpha})_0\big)^2= ({\rm R}_{04}^{\alpha})_0({\rm R}_{04}^{\alpha})_1= 0.$

  \item[$\bullet$] 
$ {\rm R}_{06}^{\alpha}   \not\to 
 \big\{  {\rm R}_{19},  {\rm R}_{22},  {\rm R}_{24} \big\},$ since 
$\big(({\rm R}_{06}^{\alpha})_0\big)^2= 0.$

\item[$\bullet$]
$ {\rm R}_{09}   \not\to 
\big\{  {\rm R}_{19},  {\rm R}_{22},  {\rm R}_{24} \big\},$ since  
${\rm R}_{09}$ satisfies $\big\{ c_{11}^1=c_{21}^2, c_{12}^2=c_{13}^3=c_{31}^3=0 \big\}.$ 

\item[$\bullet$]
$ {\rm R}_{15}   \not\to 
\big\{  {\rm R}_{19},  {\rm R}_{22},  {\rm R}_{24} \big\},$ since  
${\rm R}_{15}$ satisfies $\big\{ c_{11}^1=c_{21}^2=c_{31}^3, 
c_{12}^2=c_{13}^3 =0 \big\}.$ 
\end{enumerate}

\noindent The dimensions of orbit closures are given below:
\begin{longtable}{lclclclclclclclclcl}
&&&&&&${\rm dim} \ \overline{\mathcal{O}( {\rm R}_{04}^{\alpha})} $& $=$&$ 
 4;$\\ 

${\rm dim} \ \overline{\mathcal{O}( {\rm R}_{06}^{\alpha})} $&$ = $&
${\rm dim} \ \overline{\mathcal{O}( {\rm R}_{09})}  $&$ = $&
${\rm dim} \ \overline{\mathcal{O}( {\rm R}_{15})}  $&$ = $&
${\rm dim} \ \overline{\mathcal{O}( {\rm R}_{17})}  $&$ = $& $3;$\\

&&${\rm dim} \ \overline{\mathcal{O}( {\rm R}_{19})}  $&$ = $&
${\rm dim} \ \overline{\mathcal{O}( {\rm R}_{22})}  $&$ = $&
${\rm dim} \ \overline{\mathcal{O}( {\rm R}_{24})}  $&$ = $& $1.$\\
\end{longtable}

\end{proof}

\begin{theorem}\label{pgeo2}
The variety of complex $3$-dimensional $\mathfrak{perm}$ superalgebras of type $(2,1)$  has 
dimension  $4$   and it has  $7$  irreducible components defined by  
\begin{center}
$\mathcal{C}_1=\overline{\mathcal{O}( {\bf R}_{01})},$ \
$\mathcal{C}_2=\overline{\mathcal{O}( {\bf R}_{02})},$ \
$\mathcal{C}_3=\overline{\mathcal{O}( {\bf R}_{04})},$ \
$\mathcal{C}_4=\overline{\mathcal{O}( {\bf R}_{10})},$ \\
$\mathcal{C}_5=\overline{\mathcal{O}( {\bf R}_{25})},$ \
$\mathcal{C}_6=\overline{\mathcal{O}( {\bf R}_{31})},$ \ and 
$\mathcal{C}_7=\overline{\mathcal{O}( {\bf R}_{33})}.$ \
 
 \end{center}
In particular, there are only $7$ rigid superalgebras in this variety.
 
\end{theorem}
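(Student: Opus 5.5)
The variety in question is the union of the orbit closures of the perm superalgebras of type $(2,1)$. By the corollary on $\mathfrak{perm}$ superalgebras of type $(2,1)$, these are the associative commutative ones (${\bf R}_{01},{\bf R}_{02},{\bf R}_{09},{\bf R}_{15},{\bf R}_{17},{\bf R}_{21},{\bf R}_{37}$) together with ${\bf R}_{04},{\bf R}_{10},{\bf R}_{11},{\bf R}_{20},{\bf R}_{25},{\bf R}_{31},{\bf R}_{33},{\bf R}_{38},{\bf R}_{39}$. There are no parametric families, so every component is the closure of the orbit of a rigid superalgebra. Hence ``$7$ components'' and ``$7$ rigid superalgebras'' are the same claim.

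I will follow the scheme of Theorem \ref{pgeo1}. By Theorem \ref{asscomg}(2), ${\bf R}_{01}$ and ${\bf R}_{02}$ already absorb ${\bf R}_{09},{\bf R}_{15},{\bf R}_{17},{\bf R}_{21},{\bf R}_{37}$ inside the commutative subvariety. It therefore remains to show two things:
\begin{itemize}
\item[(a)] each of ${\bf R}_{11},{\bf R}_{20},{\bf R}_{38},{\bf R}_{39}$ is a degeneration of one of the seven listed superalgebras;
\item[(b)] none of the seven listed superalgebras degenerates to another.
\end{itemize}

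For (a), I will write explicit parametric bases $E_i^t,F^t$. I expect the following:
\begin{itemize}
\item ${\bf R}_{04}\to{\bf R}_{11}$ by killing $e_2$ with the basis $(e_1,te_2,f_1)$.
\item ${\bf R}_{31}\to{\bf R}_{20}$ or ${\bf R}_{33}\to{\bf R}_{20}$ by a suitable basis change.
\item ${\bf R}_{10}\to{\bf R}_{39}$ via $(te_2,e_1,\sqrt t f_1)$, or similar, so that $f_1f_1$ lands on the unit-free direction.
\item ${\bf R}_{38}$ should come from ${\bf R}_{10}$ by the combination $E_1=te_1+e_2$, $F=\sqrt t f_1$, which makes $E_1E_1\approx t^2e_1$ rescale into the $f_1f_1$ direction. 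Failing that, it should come from ${\bf R}_{12}$-like perm algebras in the list.
\end{itemize}
Each claimed degeneration is checked directly by computing structure constants and letting $t\to0$.

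For (b), I will first compute orbit dimensions, $\dim\mathcal O(A)=9-\dim \operatorname{Der}_0(A)$ for even derivations. A superalgebra cannot degenerate to one of equal or larger orbit dimension unless it is isomorphic to it, so this rules out many pairs. The remaining non-degenerations I will obtain from invariants that are preserved under closure, or from Lemma \ref{gmain} with $\mathfrak B$-stable conditions on structure constants, as in Theorem \ref{pgeo1}. Useful invariants include:
\begin{itemize}
\item the dimension of $(A_0)^2$, $A_0A_1$, $A_1A_0$, $A_1A_1$, which can only drop in a degeneration;
\item the number of idempotents, for example ${\bf R}_{01}$ and ${\bf R}_{02}$ versus the others;
\item one-sidedness conditions such as $A_0A_1=0$ (true for ${\bf R}_{04},{\bf R}_{10},{\bf R}_{25},{\bf R}_{31}$), which is a closed condition.
\end{itemize}
The main obstacle is the pair ${\bf R}_{04}$ versus ${\bf R}_{31}$/${\bf R}_{33}$, and ${\bf R}_{25}$ versus ${\bf R}_{31}$, where dimensions may coincide or differ by one and crude invariants may agree. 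For these I will look for Borel-stable linear relations like $c_{11}^1=c_{31}^3$ and $c_{12}^2=c_{13}^3=0$ satisfied by a representative, mimicking the conditions used for ${\bf R}_{09}$ and ${\bf R}_{15}$ in Theorem \ref{pgeo1}. The dimension of the variety, $4$, then comes from the maximal orbit closure dimension among the seven rigid superalgebras.
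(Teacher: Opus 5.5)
Your overall scheme matches the paper's: absorb the supercommutative algebras via Theorem \ref{asscomg}, exhibit the missing degenerations, and rule out the rest using invariants and orbit dimensions. However, there are concrete failures in both (a) and (b).

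\textbf{Part (a).} Neither ${\bf R}_{31}$ nor ${\bf R}_{33}$ can degenerate to ${\bf R}_{20}$. The even derivations of ${\bf R}_{20}$ are spanned by $e_2\mapsto e_2$ and $f_1\mapsto f_1$, so $\dim\mathcal{O}({\bf R}_{20})=5-2=3$. By contrast, $\dim\mathcal{O}({\bf R}_{31})=\dim\mathcal{O}({\bf R}_{33})=2$. Of the seven candidates only ${\bf R}_{04}$ can produce ${\bf R}_{20}$: ${\bf R}_{01}$ and ${\bf R}_{10}$ satisfy $A_1A_0=0$, and ${\bf R}_{02}$ is supercommutative. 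Indeed $(e_1+e_2,\ te_2,\ f_1)$ gives ${\bf R}_{04}\to{\bf R}_{20}$; the paper uses $(e_1+(t^2+1)e_2,\ te_2,\ f_1)$.

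Your basis $(te_2,\ e_1,\ \sqrt{t}f_1)$ for ${\bf R}_{10}\to{\bf R}_{39}$ also fails. It keeps $E_2E_2=E_2$, so the limit is just a copy of ${\bf R}_{10}$. You must shrink $e_1$ instead, e.g.\ $(e_2,\ te_1,\ f_1)$.

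For ${\bf R}_{38}$ you never fix $E_2$. The choice $E_1=te_1+e_2$, $E_2=t^2e_1$, $F={\bf i}\sqrt{t}f_1$ does work; the paper uses $(te_1-te_2,\ t^2e_2,\ tf_1)$. Your fallback to ``${\bf R}_{12}$-like'' algebras is unavailable, since ${\bf R}_{12}$ is not $\mathfrak{perm}$.

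\textbf{Part (b).} This is where the key idea is missing. First, your orbit-dimension formula uses the wrong group. The acting group is ${\rm GL}(V_0)\times{\rm GL}(V_1)$, of dimension $5$, so $\dim\mathcal{O}(A)=5-\dim{\rm Der}_0(A)$. With $9$ you would obtain $8$, not $4$, as the dimension of the variety. The correct values are $4$ for ${\bf R}_{01},{\bf R}_{02},{\bf R}_{04},{\bf R}_{10}$ and $2$ for ${\bf R}_{25},{\bf R}_{31},{\bf R}_{33}$.

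Consequently, the ``obstacles'' you single out are not the real ones. ${\bf R}_{25}$ versus ${\bf R}_{31}$ is settled by equal dimensions, and ${\bf R}_{04}\not\to{\bf R}_{33}$ follows from $A_0A_1=0$. What remains is to show that no $4$-dimensional orbit degenerates to a $2$-dimensional one, and your rank invariants cannot do this. For example, ${\bf R}_{04}$ and ${\bf R}_{31}$ have identical dimensions of $A_0^2$, $A_0A_1$, $A_1A_0$, $A_1A_1$. The same holds for ${\bf R}_{02}$ and ${\bf R}_{33}$, and for ${\bf R}_{01}$ and ${\bf R}_{25}$. The number of idempotents is not a degeneration invariant at all.

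The missing observation, which is the invariant the paper uses, is that commutativity of the even part is a closed, basis-independent condition. It holds in ${\bf R}_{01},{\bf R}_{02},{\bf R}_{04},{\bf R}_{10}$ and fails in ${\bf R}_{25},{\bf R}_{31},{\bf R}_{33}$, where $e_2e_1=e_2$ but $e_1e_2=0$. Together with the orbit dimensions, this settles all non-degenerations at once.
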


\begin{proof}
Taking results from Theorem \ref{asscomg},  we have to mention only four new degenerations
\begin{center} 
${\bf R}_{04}
{\xrightarrow{ \big(    
e_1,\ 
te_2 ,\  
f_1 \big)}}  
{\bf R}_{11};$ \  
${\bf R}_{04}
{\xrightarrow{ \big(    
e_1+(t^2+1)e_2,\ 
te_2 ,\  
f_1 \big)}}  
{\bf R}_{20};$ \\   
${\bf R}_{10}
{\xrightarrow{ \big(    
te_1 -te_2,\ 
 t^2e_2,\  
tf_1 \big)}}  
{\bf R}_{38};$ \  
${\bf R}_{10}
{\xrightarrow{ \big(    
e_2,\ 
te_1 ,\  
f_1 \big)}}  
{\bf R}_{39}.$  
\end{center}
\noindent
All non-degenerations follow from the following observations.
\begin{enumerate}
    \item[$\bullet$] 
$ \big\{{\bf R}_{04}, {\bf R}_{10} \big\}   \not\to 
\big\{ {\bf R}_{25},  {\bf R}_{31},  {\bf R}_{33} \big\},$ since 
$({\bf R}_{04})_0$ and $({\bf R}_{10})_0$ are commmutative.
 
\end{enumerate}

\noindent The dimensions of orbit closures are given below:
\begin{longtable}{lclclclclclclclclcl}
${\rm dim} \ \overline{\mathcal{O}( {\bf R}_{01})} $& $=$& 
${\rm dim} \ \overline{\mathcal{O}( {\bf R}_{02})} $& $=$& 
${\rm dim} \ \overline{\mathcal{O}( {\bf R}_{04})} $& $=$& 
${\rm dim} \ \overline{\mathcal{O}( {\bf R}_{10})} $& $=$& 
 $4;$\\ 
&&${\rm dim} \ \overline{\mathcal{O}( {\bf R}_{25})} $& $=$& 
${\rm dim} \ \overline{\mathcal{O}( {\bf R}_{31})} $& $=$& 
${\rm dim} \ \overline{\mathcal{O}( {\bf R}_{33})} $& $=$& 
 $2.$\\ 

\end{longtable}

\end{proof}

\subsubsection{Binary  $\mathfrak{perm}$   superalgebras}

\begin{theorem}\label{bpgeo1}
The variety of complex $3$-dimensional binary $\mathfrak{perm}$ superalgebras of type $(1,2)$  has 
dimension  $4$   and it has  $8$  irreducible components defined by  
\begin{center}
$\mathcal{C}_1=\overline{\mathcal{O}( {\rm R}_{04}^\alpha)},$ \
$\mathcal{C}_2=\overline{\mathcal{O}( {\rm R}_{07}^\alpha)},$ \
$\mathcal{C}_3=\overline{\mathcal{O}( {\rm R}_{09})},$ \
$\mathcal{C}_4=\overline{\mathcal{O}( {\rm N}_{15})},$ \\
$\mathcal{C}_5=\overline{\mathcal{O}( {\rm R}_{17})},$ \ 
$\mathcal{C}_6=\overline{\mathcal{O}( {\rm R}_{19})},$ \
$\mathcal{C}_7=\overline{\mathcal{O}( {\rm R}_{22})},$ and 
$\mathcal{C}_8=\overline{\mathcal{O}( {\rm R}_{24})}.$

 \end{center}
In particular, there are only $6$ rigid superalgebras in this variety.
 
\end{theorem}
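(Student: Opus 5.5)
By the corollary on binary $\mathfrak{perm}$ superalgebras of type $(1,2)$, the variety consists of the $\mathfrak{perm}$ superalgebras together with ${\rm R}_{07}^{\alpha}$ and ${\rm R}_{28}$. So I will start from Theorem \ref{pgeo1}, whose components are $\overline{\mathcal{O}({\rm R}_{04}^\alpha)}$, $\overline{\mathcal{O}({\rm R}_{06}^\alpha)}$, $\overline{\mathcal{O}({\rm R}_{09})}$, $\overline{\mathcal{O}({\rm R}_{15})}$ (written ${\rm N}_{15}$ in the statement), $\overline{\mathcal{O}({\rm R}_{17})}$, $\overline{\mathcal{O}({\rm R}_{19})}$, $\overline{\mathcal{O}({\rm R}_{22})}$ and $\overline{\mathcal{O}({\rm R}_{24})}$. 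The task is then to see how the new family ${\rm R}_{07}^{\alpha}$ and the new algebra ${\rm R}_{28}$ fit into this picture.

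\textbf{Step 1: the new family absorbs the component of ${\rm R}_{06}^\alpha$.} Since ${\rm R}_{07}^\alpha$ is ${\rm R}_{06}^\alpha$ with the extra product $f_1f_1=e_1$, the scaling $(t e_1,\ \sqrt{t}\, f_1,\ t\sqrt{t}\, f_2)$ should give ${\rm R}_{07}^\alpha\to{\rm R}_{06}^\alpha$. Let me check it. With $E_1=te_1$, $F_1=\sqrt{t}f_1$, $F_2=t\sqrt t f_2$ we get $E_1F_1=t\sqrt t(1+\alpha)f_2=(1+\alpha)F_2$, and similarly $F_1E_1=(1-\alpha)F_2$. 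Finally $F_1F_1=t e_1 = E_1$. This reproduces ${\rm R}_{07}^\alpha$ exactly, so the scaling is not a degeneration. Instead I will use $(e_1,\ t f_1,\ t f_2)$. This keeps the $e_1f_1$ and $f_1e_1$ products unchanged and sends $f_1f_1$ to $t^2 e_1\to 0$, which gives ${\rm R}_{07}^\alpha\to{\rm R}_{06}^\alpha$.

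Hence $\mathcal{C}_2$ becomes $\overline{\mathcal{O}({\rm R}_{07}^\alpha)}$. This family has one parameter and its orbits have dimension $3$, since the stabilizer is one-dimensional: $(a e_1, b f_1, ab f_2)$ preserves the structure when $b^2=a$. So $\dim \overline{\mathcal{O}({\rm R}_{07}^\alpha)}=4$.

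\textbf{Step 2: place ${\rm R}_{28}$.} I expect ${\rm R}_{28}$ to lie in the closure of the ${\rm R}_{07}^\alpha$ family, in the same way that ${\rm R}_{27}$ lies in the closure of ${\rm R}_{06}^{\frac{2}{t}-1}$ (Theorem \ref{pgeo1}). I will try ${\rm R}_{07}^{\frac2t-1}$ with a basis of the form $(\frac t2 e_1,\ c f_1,\ d f_2)$, choosing $c,d$ so that $f_1f_1=e_1$ is preserved. We need $c^2 e_1 = \frac t2 e_1$, so $c=\sqrt{t/2}$. For the products $e_1f_1$ and $f_1e_1$ to tend to $\pm f_2$, we need $\frac t2\, c\,\frac{2}{t}\, f_2 = d f_2$, so $d=c$. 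A quick check then gives $F_1E_1=\frac t2 c(2-\frac2t) f_2=(t-1)F_2\to -F_2$. This yields ${\rm R}_{07}^{\frac2t-1}\to{\rm R}_{28}$, so ${\rm R}_{28}$ is not a new component.

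\textbf{Step 3: non-degenerations and counting.} It remains to show that ${\rm R}_{07}^\alpha$ does not degenerate to the other rigid algebras ${\rm R}_{09}$, ${\rm R}_{15}$, ${\rm R}_{17}$, ${\rm R}_{19}$, ${\rm R}_{22}$, ${\rm R}_{24}$, and that ${\rm R}_{04}^\alpha$ does not reach ${\rm R}_{07}^\alpha$. I will use the same invariants as in the proof of Theorem \ref{pgeo1}.

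\begin{itemize}
\item $\big(({\rm R}_{07}^\alpha)_0\big)^2=0$, and this condition is closed and $\mathfrak{B}$-stable. It excludes every target with $e_1e_1=e_1$, which covers all of the listed algebras.
\item ${\rm R}_{04}^\alpha\not\to{\rm R}_{07}^\alpha$ because $({\rm R}_{04}^\alpha)_0({\rm R}_{04}^\alpha)_1=0$.
\item Conversely, the dimension-$3$ components cannot reach the $4$-dimensional family, and ${\rm R}_{04}^\alpha$ does not reach ${\rm R}_{07}^\alpha$. So ${\rm R}_{07}^\alpha$ is not contained in any other component.
\end{itemize}

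The rigid algebras are then ${\rm R}_{09}$, ${\rm R}_{15}$, ${\rm R}_{17}$, ${\rm R}_{19}$, ${\rm R}_{22}$, ${\rm R}_{24}$, which are six, with the two families giving the remaining components. The main obstacle is Step 2: finding the exact parametrized basis change. If the guess above fails, I will solve the constraint equations on a general diagonal basis $(a e_1,\ b f_1,\ c f_1+d f_2)$ directly.
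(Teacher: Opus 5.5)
Your argument follows the paper's proof. You start from Theorem \ref{pgeo1}, add ${\rm R}_{07}^{\alpha}\to{\rm R}_{06}^{\alpha}$ via the same basis $(e_1,tf_1,tf_2)$, add ${\rm R}_{07}^{\frac{2}{t}-1}\to{\rm R}_{28}$, and exclude everything else with $\big(({\rm R}_{07}^{\alpha})_0\big)^2=0$ plus dimension counting. Your basis $(\frac{t}{2}e_1,\sqrt{t/2}\,f_1,\sqrt{t/2}\,f_2)$ for the second degeneration is correct; it is $(e_1,f_1,\frac{2}{t}f_2)$ composed with the automorphism $(b^2e_1,bf_1,b^3f_2)$, $b=\sqrt{t/2}$. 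By contrast, the basis $(e_1,tf_1,\frac{2}{t}f_2)$ printed in the paper gives $E_1F_1=tF_2$, $F_1E_1=(t^2-t)F_2$ and $F_1F_1=t^2E_1$, so it degenerates to the trivial superalgebra. It is presumably a typo for $(e_1,f_1,\frac{2}{t}f_2)$.

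\textbf{The stabilizer justification needs fixing.} The stabilizer of ${\rm R}_{07}^{\alpha}$ in the $5$-dimensional group ${\rm GL}_1\times{\rm GL}_2$ is $2$-dimensional, not $1$-dimensional. It consists of $e_1\mapsto b^2e_1$, $f_1\mapsto bf_1+cf_2$, $f_2\mapsto b^3f_2$; you missed the part $f_1\mapsto f_1+cf_2$. This is what gives orbit dimension $5-2=3$ and hence $\dim\overline{\mathcal{O}({\rm R}_{07}^{\alpha})}=4$. With the $1$-dimensional stabilizer you state, the family would have dimension $5$, contradicting the theorem.

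\textbf{Minor point on ${\rm R}_{04}$.} The invariant $({\rm R}_{04}^{\alpha})_0({\rm R}_{04}^{\alpha})_1=0$ alone does not separate ${\rm R}_{04}^{\beta}$ from ${\rm R}_{07}^{-1}$, since there $e_1f_1=0$. Either also use $({\rm R}_{04}^{\alpha})_1({\rm R}_{04}^{\alpha})_0=0$, or note that excluding one member such as ${\rm R}_{07}^{0}$ already gives $\mathcal{C}_2\not\subset\mathcal{C}_1$. Then $\mathcal{C}_1\not\subset\mathcal{C}_2$ follows because both are irreducible of the same dimension.
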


\begin{proof}
Taking results from Theorem \ref{pgeo1},  we have to mention only two new degenerations
\begin{center}
${\rm R}_{07}^{\alpha} \ {\xrightarrow{ \big(    
e_1,\ 
t f_1,\  
tf_2 \big)}}\  {\rm R}_{06}^\alpha;$ \ 
${\rm R}_{07}^{\frac{2}{t}-1} \ {\xrightarrow{ \big(    
e_1,\ 
t f_1,\  
\frac{2}{t}f_2 \big)}}\  {\rm R}_{28}.$ \\

\end{center}
\noindent
All non-degenerations follow from 
${\rm dim} \ \overline{\mathcal{O}( {\rm R}_{07}^{\alpha})} \ = 4$ and 
the following observations.
\begin{enumerate}
    \item[$\bullet$] 
$ {\rm R}_{07}^{\alpha}   \not\to 
\big\{   {\rm R}_{09},  {\rm R}_{15},  {\rm R}_{17},  {\rm R}_{19},  {\rm R}_{22},  {\rm R}_{24} \big\},$ since 
$\big(({\rm R}_{07}^{\alpha})_0\big)^2= 0.$
 
\end{enumerate}

\end{proof}

\begin{theorem}\label{bpgeo2}
The variety of complex $3$-dimensional binary $\mathfrak{perm}$ superalgebras of type $(2,1)$  has 
dimension  $4$   and it has  $7$  irreducible components defined by  
\begin{center}
$\mathcal{C}_1=\overline{\mathcal{O}( {\bf R}_{01})},$ \
$\mathcal{C}_2=\overline{\mathcal{O}( {\bf R}_{02})},$ \
$\mathcal{C}_3=\overline{\mathcal{O}( {\bf R}_{04})},$ \
$\mathcal{C}_4=\overline{\mathcal{O}( {\bf R}_{10})},$ \\
$\mathcal{C}_5=\overline{\mathcal{O}( {\bf R}_{25})},$ \
$\mathcal{C}_6=\overline{\mathcal{O}( {\bf R}_{31})},$ \ and 
$\mathcal{C}_7=\overline{\mathcal{O}( {\bf R}_{33})}.$ \
 
 \end{center}
In particular, there are only $7$ rigid superalgebras in this variety.
 
\end{theorem}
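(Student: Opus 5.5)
By Corollary~(binary $\mathfrak{perm}$ of type $(2,1)$), every complex $3$-dimensional binary $\mathfrak{perm}$ superalgebra of type $(2,1)$ is already a $\mathfrak{perm}$ superalgebra. So the two varieties coincide as subsets of $\operatorname{Hom}(V\otimes V,V)$, and I would argue as follows.

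First, I would note that the variety of binary $\mathfrak{perm}$ superalgebras of type $(2,1)$ is the Zariski-closed set cut out by the binary $\mathfrak{perm}$ superidentities. By the corollary, as a set of points it equals the variety of $\mathfrak{perm}$ superalgebras of type $(2,1)$. The orbit structure under $G=\operatorname{GL}(V_0)\oplus\operatorname{GL}(V_1)$ is therefore identical. Irreducible components, orbit closures and rigidity are topological notions, so they depend only on the underlying closed set.

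Hence the decomposition into the seven components $\overline{\mathcal{O}({\bf R}_{01})},\dots,\overline{\mathcal{O}({\bf R}_{33})}$ transfers verbatim from Theorem~\ref{pgeo2}. So do the dimension $4$ and the seven rigid superalgebras. The degenerations ${\bf R}_{04}\to{\bf R}_{11},{\bf R}_{20}$ and ${\bf R}_{10}\to{\bf R}_{38},{\bf R}_{39}$, the associative commutative degenerations from Theorem~\ref{asscomg}, and the non-degenerations (commutativity of the even part) are all inherited without change.

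The only point needing care is that the corollary really gives equality of point sets. This means checking that the list of type $(2,1)$ superalgebras from Theorem~A2 satisfying the binary $\mathfrak{perm}$ identities is exactly the union of the associative commutative list and the $\mathfrak{perm}$ list. This is asserted by the corollary, which I take as given. With that, there is no genuine obstacle; the statement is a formal consequence of Theorem~\ref{pgeo2}.
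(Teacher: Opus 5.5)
Your proposal is correct and is essentially the paper's own reasoning. The paper gives no separate proof of this theorem: by the corollary that every $3$-dimensional binary $\mathfrak{perm}$ superalgebra of type $(2,1)$ is $\mathfrak{perm}$, together with the general inclusion of $\mathfrak{perm}$ into binary $\mathfrak{perm}$, the two varieties coincide as closed $G$-stable sets, so the components, the dimension and the rigid superalgebras are exactly those of Theorem~\ref{pgeo2}.
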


\subsubsection{Associative   superalgebras}

\begin{theorem}\label{assgeo1}
The variety of complex $3$-dimensional associative  superalgebras of type $(1,2)$  has 
dimension  $4$   and it has  $12$  irreducible components defined by  
\begin{center}
$\mathcal{C}_1=\overline{\mathcal{O}( {\rm R}_{04}^\alpha)},$ \
$\mathcal{C}_2=\overline{\mathcal{O}( {\rm R}_{06}^\alpha)},$ \
$\mathcal{C}_3=\overline{\mathcal{O}( {\rm R}_{09})},$ \
$\mathcal{C}_4=\overline{\mathcal{O}( {\rm R}_{11})},$ \\
$\mathcal{C}_5=\overline{\mathcal{O}( {\rm R}_{13})},$ \
$\mathcal{C}_6=\overline{\mathcal{O}( {\rm R}_{15})},$ \
$\mathcal{C}_7=\overline{\mathcal{O}( {\rm R}_{18})},$ \ 
$\mathcal{C}_8=\overline{\mathcal{O}( {\rm R}_{19})},$ \\
$\mathcal{C}_9=\overline{\mathcal{O}( {\rm R}_{20})},$ \
$\mathcal{C}_{10}=\overline{\mathcal{O}( {\rm R}_{22})},$  \ 
$\mathcal{C}_{11}=\overline{\mathcal{O}( {\rm R}_{23})},$  and
$\mathcal{C}_{12}=\overline{\mathcal{O}( {\rm R}_{24})}.$ \

 \end{center}
In particular, there are only $10$ rigid superalgebras in this variety.
 
\end{theorem}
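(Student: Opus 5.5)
Following the pattern of Theorems \ref{pgeo1} and \ref{bpgeo1}, I would start from the geometric classification of $\mathfrak{perm}$ superalgebras of type $(1,2)$. By the corollaries in the algebraic part, every associative superalgebra of type $(1,2)$ is either $\mathfrak{perm}$ or isomorphic to one of ${\rm R}_{07}^{0}$, ${\rm R}_{11}$, ${\rm R}_{13}$, ${\rm R}_{18}$, ${\rm R}_{20}$, ${\rm R}_{23}$. So the variety is the union of the $\mathfrak{perm}$ variety (components $\overline{\mathcal{O}({\rm R}_{04}^\alpha)}$, $\overline{\mathcal{O}({\rm R}_{06}^\alpha)}$, $\overline{\mathcal{O}({\rm R}_{09})}$, $\overline{\mathcal{O}({\rm R}_{15})}$, $\overline{\mathcal{O}({\rm R}_{17})}$, $\overline{\mathcal{O}({\rm R}_{19})}$, $\overline{\mathcal{O}({\rm R}_{22})}$, $\overline{\mathcal{O}({\rm R}_{24})}$) and the orbit closures of these six new algebras. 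The task therefore has three parts. First, decide which new algebras lie in the closure of others. Second, decide which old components get absorbed. Third, prove the remaining non-degenerations.

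For the degenerations I expect the following.
\begin{enumerate}
\item ${\rm R}_{07}^{0}$ is absorbed. It lies in a one-parameter family, $\mathfrak{perm}$ only at special values, so I would try to get it from ${\rm R}_{06}^{\alpha}$ or ${\rm R}_{04}^{\alpha}$ with a parameter depending on $t$. Failing that, I would look for a degeneration from ${\rm R}_{18}$ or ${\rm R}_{13}$ by scaling $e_1\mapsto te_1$.
\item ${\rm R}_{17}$ is absorbed. The components list drops $\overline{\mathcal{O}({\rm R}_{17})}$ and adds ${\rm R}_{18}$, which has the extra product $f_1f_1=e_1$. So I would exhibit ${\rm R}_{18}\to{\rm R}_{17}$ via $(e_1,\ tf_1,\ f_2)$.
\end{enumerate}
I would then check the orbit closure dimensions by computing derivation algebras: $\dim\mathcal{O}=9-\dim\mathfrak{Der}$, where $\dim G=1+4=5$. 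The family ${\rm R}_{04}^\alpha$ should give the top value $4$. The pairs ${\rm R}_{11},{\rm R}_{13},{\rm R}_{18},{\rm R}_{20},{\rm R}_{23}$ should give orbit dimensions around $2$ or $3$. These dimensions immediately rule out any degeneration that would go upward in dimension.

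For the non-degenerations I would use Lemma \ref{gmain} with simple invariant closed conditions, as in the earlier proofs. These include:
\begin{enumerate}
\item $(A_0)^2=0$ and $A_0A_1=0$, or $A_1A_0=0$;
\item linear relations among structure constants stable under the Borel subgroup, such as $c_{11}^1=c_{12}^2$ and $c_{21}^2=0$, which separate left-unit actions on $A_1$ from right ones;
\item whether $f_1f_1$ can be nonzero.
\end{enumerate}
For instance, the family ${\rm R}_{04}^\alpha$ and ${\rm R}_{06}^\alpha$ satisfy $(A_0)^2=0$, so they cannot reach any algebra with $e_1e_1=e_1$. The idempotent algebras ${\rm R}_{11},{\rm R}_{13},{\rm R}_{20},{\rm R}_{23}$ differ in whether $A_1$ is a left module, a right module, or mixed under $e_1$. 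The multiplicities of the eigenvalue $1$ of $L_{e_1}$ and $R_{e_1}$ on $A_1$ are semicontinuous and should separate them.

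The main obstacle is certifying the two missing rigidities: only $10$ of the $12$ components are rigid, the other two being the families ${\rm R}_{04}^\alpha$ and ${\rm R}_{06}^\alpha$. Concretely, this means showing that none of the six rigid idempotent-type algebras degenerates to another. Since several of them have equal orbit dimension, I would need an invariant distinguishing same-dimensional orbits. I would first try the Borel-stable conditions listed above. If needed, I would also use the trace conditions ${\rm tr}\,L_{e_1}|_{A_1}$ and ${\rm tr}\,R_{e_1}|_{A_1}$ relative to $c_{11}^1$, since any degeneration from $e_1e_1=e_1$ preserves these ratios.
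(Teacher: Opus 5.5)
Your framework is the paper's: start from the $\mathfrak{perm}$ classification, add ${\rm R}_{07}^{0},{\rm R}_{11},{\rm R}_{13},{\rm R}_{18},{\rm R}_{20},{\rm R}_{23}$, absorb ${\rm R}_{17}$ and ${\rm R}_{07}^{0}$ into $\overline{\mathcal{O}({\rm R}_{18})}$, and use closed invariants plus orbit dimensions. Your ${\rm R}_{18}\to{\rm R}_{17}$ via $(e_1,tf_1,f_2)$ is exactly the paper's. The plan nevertheless has concrete gaps.

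\textbf{The degeneration onto ${\rm R}_{07}^{0}$ is not found, and your first candidates are impossible.} ${\rm R}_{06}^{\alpha}$ and ${\rm R}_{13}$ satisfy $(A_1)^2=0$, and ${\rm R}_{04}^{\alpha}$ satisfies $A_0A_1=A_1A_0=0$. These are closed $G$-stable conditions, so they hold on the closure of the whole family, and letting $\alpha$ depend on $t$ does not help. But ${\rm R}_{07}^{0}$ has $f_1f_1=e_1$ and $e_1f_1=f_2$. The only possible source is ${\rm R}_{18}$. Rescaling $e_1$ cannot work, nor can any diagonal change of basis: $L_{e_1}$ stays diagonal in the new odd basis, so the limit never has $e_1f_1=f_2$. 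You must shrink $e_1$ and tilt the odd basis, e.g.\ the paper's $(t^2e_1,\ tf_1+f_2,\ t^2(1+t)f_1+tf_2)$, or simply $(t^2e_1,\ tf_1+f_2,\ t^3f_1)$.

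\textbf{Your dimension formula is wrong.} For $G={\rm GL}_1\times{\rm GL}_2$ one has $\dim\mathcal{O}=5-\dim\mathfrak{Der}_0$, not $9-\dim\mathfrak{Der}$. Computed correctly, $\mathfrak{Der}_0({\rm R}_{18})$ is spanned by $f_2\mapsto f_2$, so $\dim\mathcal{O}({\rm R}_{18})=4$. It is thus a second top-dimensional component beside the ${\rm R}_{04}^{\alpha}$ family. Meanwhile ${\rm R}_{11},{\rm R}_{13},{\rm R}_{20}$ give $3$ and ${\rm R}_{23}$ gives $1$, not the values you guessed.

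\textbf{Your non-degeneration toolkit misses the delicate cases.} The trace relations ${\rm tr}\,L_{e_1}|_{A_1}=k\,c_{11}^1$ and ${\rm tr}\,R_{e_1}|_{A_1}=k'c_{11}^1$ are closed and $G$-stable and settle many pairs, e.g.\ ${\rm R}_{11},{\rm R}_{13},{\rm R}_{20}\not\to{\rm R}_{19},{\rm R}_{22},{\rm R}_{23},{\rm R}_{24}$. They cannot separate ${\rm R}_{18}$ from ${\rm R}_{20}$, since both have $k=k'=1$. Nor can they separate ${\rm R}_{18}$ from the family ${\rm R}_{06}^{\alpha}$, where all these quantities vanish. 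That last pair is not on your list at all, and dimension does not settle it, since $\dim\overline{\mathcal{O}({\rm R}_{06}^{\alpha})}=3<4$.

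In fact ${\rm R}_{18}\to{\rm R}_{07}^{0}\to{\rm R}_{06}^{0}$, the second step via $(e_1,tf_1,tf_2)$. So what must be proved is ${\rm R}_{18}\not\to{\rm R}_{06}^{\alpha}$ for $\alpha\neq0$; otherwise $\mathcal{C}_2$ would be swallowed by $\overline{\mathcal{O}({\rm R}_{18})}$. The invariant that does this is the closed $G$-stable condition $xy=yx$ for $x\in A_0$, $y\in A_1$. It holds in ${\rm R}_{18}$ and fails in ${\rm R}_{06}^{\alpha\neq0}$, where $e_1f_1-f_1e_1=2\alpha f_2$. The same condition also gives ${\rm R}_{18}\not\to{\rm R}_{09},{\rm R}_{11},{\rm R}_{13},{\rm R}_{15},{\rm R}_{20},{\rm R}_{22},{\rm R}_{23}$, and the traces then handle ${\rm R}_{19},{\rm R}_{24}$.

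Note that the Borel condition the paper quotes for ${\rm R}_{18}\not\to{\rm R}_{06}^{\alpha}$ is satisfied by ${\rm R}_{06}^{\alpha}$ itself in its standard basis. So this commutation argument is needed there as well.
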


\begin{proof}
Taking results from Theorem \ref{pgeo1},  we have to mention only one  two degenerations
\begin{center}
${\rm R}_{18} \ {\xrightarrow{ \big(    
t^2e_1,\ 
tf_1+f_2,\  
t^2 (1+t)f_1+tf_2 \big)}} \ {\rm R}_{07}^0;$ \  
${\rm R}_{18} \ {\xrightarrow{ \big(    
e_1,\ 
tf_1 ,\  
f_2 \big)}} \ {\rm R}_{17}.$ 
\end{center}
 
\noindent
All non-degenerations follow from the following observations.
\begin{enumerate}
    \item[$\bullet$] 
$ {\rm R}_{18}    \not\to 
\left\{ 
\begin{array}{l}
{\rm R}_{06}^{\alpha},  {\rm R}_{09}, {\rm R}_{11}, {\rm R}_{13},
{\rm R}_{15},  \\  
{\rm R}_{19}, {\rm R}_{20}, {\rm R}_{22}, {\rm R}_{23},  {\rm R}_{24} 
\end{array}\right\},$ since 
 ${\rm R}_{18}$   satisfies 
       $\big\{ c_{11}^1=c_{12}^2=c_{21}^2, c_{13}^3=c_{31}^3=0 \big\}.$

  \item[$\bullet$] 
$ {\rm R}_{11}    \not\to 
  \big\{ {\rm R}_{19},   {\rm R}_{22}, 
{\rm R}_{23},  {\rm R}_{24} \big\},$ since 
 ${\rm R}_{11}$   satisfies  
 $\big\{ c_{11}^1=c_{12}^2, c_{21}^1=c_{13}^3=c_{31}^3=0 \big\}.$

   \item[$\bullet$] 
$ {\rm R}_{13}    \not\to 
  \big\{ {\rm R}_{19},   {\rm R}_{22}, 
{\rm R}_{23},  {\rm R}_{24} \big\},$ since 
 ${\rm R}_{13}$   satisfies  
 $\big\{ c_{11}^1=c_{12}^2=c_{13}^3, c_{21}^1=c_{31}^3=0 \big\}.$

  \item[$\bullet$] 
$ {\rm R}_{20}    \not\to 
  \big\{ {\rm R}_{19},   {\rm R}_{22}, 
{\rm R}_{23},  {\rm R}_{24} \big\},$ since 
 ${\rm R}_{20}$   satisfies  
 $\big\{ c_{11}^1=c_{12}^2=c_{31}^3, c_{21}^1=c_{13}^3=0 \big\}.$

\end{enumerate}

\noindent The dimensions of orbit closures are given below:
\begin{longtable}{lclclclclclclclclcl}
&&&&${\rm dim} \ \overline{\mathcal{O}( {\rm R}_{04}^{\alpha})} $& $=$&
${\rm dim} \ \overline{\mathcal{O}( {\rm R}_{18})}  $&$ = $&$ 
 4;$\\ 

&&${\rm dim} \ \overline{\mathcal{O}( {\rm R}_{06}^{\alpha})} $&$ = $&
${\rm dim} \ \overline{\mathcal{O}( {\rm R}_{09})}  $&$ = $&
${\rm dim} \ \overline{\mathcal{O}( {\rm R}_{11})}  $&$ = $&\\&&
${\rm dim} \ \overline{\mathcal{O}( {\rm R}_{13})}  $&$ = $&
${\rm dim} \ \overline{\mathcal{O}( {\rm R}_{15})}  $&$ = $&
${\rm dim} \ \overline{\mathcal{O}( {\rm R}_{20})}  $&$ = $& $3;$\\

${\rm dim} \ \overline{\mathcal{O}( {\rm R}_{19})}  $&$ = $&
${\rm dim} \ \overline{\mathcal{O}( {\rm R}_{22})}  $&$ = $&
${\rm dim} \ \overline{\mathcal{O}( {\rm R}_{23})}  $&$ = $&
${\rm dim} \ \overline{\mathcal{O}( {\rm R}_{24})}  $&$ = $& $1.$\\
\end{longtable}

\end{proof}

\begin{theorem}\label{assgeo2}
The variety of complex $3$-dimensional associative superalgebras of type $(2,1)$  has 
dimension  $5$   and it has  $13$  irreducible components defined by  
\begin{center}
$\mathcal{C}_1=\overline{\mathcal{O}( {\bf R}_{01})},$ \
$\mathcal{C}_2=\overline{\mathcal{O}( {\bf R}_{03})},$ \
$\mathcal{C}_3=\overline{\mathcal{O}( {\bf R}_{04})},$ \
$\mathcal{C}_4=\overline{\mathcal{O}( {\bf R}_{05})},$ \
$\mathcal{C}_5=\overline{\mathcal{O}( {\bf R}_{07})},$ \\
$\mathcal{C}_6=\overline{\mathcal{O}( {\bf R}_{23})},$ \
$\mathcal{C}_7=\overline{\mathcal{O}( {\bf R}_{25})},$ \
$\mathcal{C}_8=\overline{\mathcal{O}( {\bf R}_{26})},$ \
$\mathcal{C}_9=\overline{\mathcal{O}( {\bf R}_{29})},$ \\
$\mathcal{C}_{10}=\overline{\mathcal{O}( {\bf R}_{31})},$ \
$\mathcal{C}_{11}=\overline{\mathcal{O}( {\bf R}_{32})},$ \ 
$\mathcal{C}_{12}=\overline{\mathcal{O}( {\bf R}_{33})},$ \ and 
$\mathcal{C}_{13}=\overline{\mathcal{O}( {\bf R}_{36})}.$ \\
 
 \end{center}
In particular, there are only $13$ rigid superalgebras in this variety.
 
\end{theorem}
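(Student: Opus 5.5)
The plan follows the pattern of the proofs of Theorems \ref{pgeo2} and \ref{bpgeo2}. By the corollaries above, every $3$-dimensional associative superalgebra of type $(2,1)$ is either $\mathfrak{perm}$ or one of ${\bf R}_{03},{\bf R}_{05},{\bf R}_{07},{\bf R}_{13},{\bf R}_{16},{\bf R}_{18},{\bf R}_{22},{\bf R}_{23},{\bf R}_{26},{\bf R}_{29},{\bf R}_{32},{\bf R}_{36}$. The $\mathfrak{perm}$ part is already described by Theorem \ref{pgeo2}, with components given by ${\bf R}_{01},{\bf R}_{02},{\bf R}_{04},{\bf R}_{10},{\bf R}_{25},{\bf R}_{31},{\bf R}_{33}$. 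Three steps remain:
\begin{enumerate}
\item[(a)] show that the non-rigid new algebras ${\bf R}_{13},{\bf R}_{16},{\bf R}_{18},{\bf R}_{22}$, together with the old generic algebras ${\bf R}_{02}$ and ${\bf R}_{10}$, lie in closures of the claimed rigid ones;
\item[(b)] prove that none of the thirteen listed algebras degenerates to another;
\item[(c)] compute orbit dimensions.
\end{enumerate}
For (c), $\dim\mathcal O({\rm A})=\dim G-\dim\mathrm{Der}_0({\rm A})$ with $\dim G=5$, so we only need the even derivation algebras. For example, ${\bf R}_{01}$, ${\bf R}_{03}$, ${\bf R}_{05}$ and ${\bf R}_{07}$ should have only scaling of $f_1$ (or nothing) as even derivations. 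The maximal value $5$ is attained by ${\bf R}_{03}$, which has no nonzero even derivations, since $f_1f_1=e_1$ fixes the scaling of $f_1$.

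For step (a) we write explicit parametric bases $E_i^t$ and check the limits, as in the displayed degenerations earlier. Natural candidates are the following.
\begin{itemize}
\item ${\bf R}_{03}\to{\bf R}_{02}$ via $f_1\mapsto tf_1$, which kills $f_1f_1$.
\item ${\bf R}_{05}\to{\bf R}_{13}$ and ${\bf R}_{03}\to{\bf R}_{16}$ via $e_2\mapsto te_2$.
\item ${\bf R}_{36}$ or ${\bf R}_{32}\to{\bf R}_{18}$, using ${\bf R}_{32}$ and twisting $e_2$ by $e_1$.
\item ${\bf R}_{03}$ (or ${\bf R}_{07}$) $\to{\bf R}_{22}$ and ${\bf R}_{10}$, using combinations such as $e_1'=e_1+e_2$, $e_2'=te_2$, $f_1'=\sqrt t f_1$, so that $f_1f_1$ lands in the square-zero direction.
\end{itemize}
Where a target has orbit dimension equal to that of the source, we instead relocate the target into a different component.

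For step (b), among rigid algebras of equal orbit dimension no degeneration is possible, so we only need to separate algebras of larger orbit dimension from smaller ones. We use invariants preserved under degeneration:
\begin{itemize}
\item commutativity of the even part, which separates ${\bf R}_{03},{\bf R}_{05},{\bf R}_{07}$ from ${\bf R}_{23},{\bf R}_{25},{\bf R}_{26},{\bf R}_{29},{\bf R}_{31},{\bf R}_{32},{\bf R}_{33},{\bf R}_{36}$, as in the proof of Theorem \ref{pgeo2};
\item $\dim {\rm A}^2$ and $\dim {\rm A}_0{\rm A}_1$, $\dim{\rm A}_1{\rm A}_0$;
\item Borel-stable closed conditions on structure constants of the form $c_{11}^1=c_{13}^3$, $c_{31}^3=0$, etc., via Lemma \ref{gmain}, exactly as in the proof of Theorem \ref{assgeo1}.
\end{itemize}
For instance, ${\bf R}_{05}$ satisfies $c_{11}^1=c_{13}^3$ and $c_{31}^3=0$ in a suitable basis, which excludes targets where $f_1$ is a right $e_1$-module.

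The main obstacle is the non-degeneration bookkeeping among the eight non-commutative-even algebras ${\bf R}_{23},\dots,{\bf R}_{36}$ and the algebras with $f_1f_1\ne0$. The dimension argument does not suffice there, so suitable Borel-stable sets must be found. I would try first the triangular conditions that record which one-sided actions of $e_1$ on $e_2$ and on $f_1$ are identity and which are zero.
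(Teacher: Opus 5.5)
Your plan has the same skeleton as the paper's proof: reuse Theorem \ref{pgeo2}, degenerate onto ${\bf R}_{02},{\bf R}_{10},{\bf R}_{13},{\bf R}_{16},{\bf R}_{18},{\bf R}_{22}$, separate the thirteen candidates, and count orbit dimensions. Your degenerations ${\bf R}_{03}\to{\bf R}_{02}$, ${\bf R}_{05}\to{\bf R}_{13}$, ${\bf R}_{03}\to{\bf R}_{16}$ and ${\bf R}_{03}\to{\bf R}_{22}$ are essentially the paper's.

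\textbf{The step for ${\bf R}_{18}$ fails.}
The even derivations of ${\bf R}_{32}$ and of ${\bf R}_{36}$ are $e_1\mapsto be_2$, $e_2\mapsto de_2$, $f_1\mapsto kf_1$. Hence $\dim\mathcal{O}({\bf R}_{32})=\dim\mathcal{O}({\bf R}_{36})=2$. By contrast, $\mathrm{Der}_0({\bf R}_{18})$ is spanned by $e_2\mapsto e_2$ and $f_1\mapsto f_1$, so $\dim\mathcal{O}({\bf R}_{18})=3$. A proper degeneration strictly lowers orbit dimension, so neither ${\bf R}_{32}$ nor ${\bf R}_{36}$ can degenerate to ${\bf R}_{18}$.

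${\bf R}_{03}$ and ${\bf R}_{07}$ cannot be the source either. The unit $E_1$ of $({\bf R}_{18})_0$ is a nondegenerate idempotent whose left multiplication on the even part has rank $2$. It is therefore a limit of the unit of the source's even part. In ${\bf R}_{03}$ and ${\bf R}_{07}$ that unit acts on $f_1$ from both sides, whereas $f_1E_1=0$ in ${\bf R}_{18}$.

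The source has to be ${\bf R}_{05}$, whose unit $e_1+e_2$ acts on $f_1$ only from the left. Your twisting trick applied there, $(e_1+e_2,\ te_2,\ f_1)$, gives ${\bf R}_{05}\to{\bf R}_{18}$, which is the paper's degeneration. Without it, ${\bf R}_{18}$ is not placed in any listed component, and the count of $13$ components is not proved.

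\textbf{The suggested substitution does not give ${\bf R}_{10}$.} With $(e_1+e_2,\ te_2,\ \sqrt t f_1)$ on ${\bf R}_{03}$ you get ${\bf R}_{22}$, because $e_1+e_2$ survives and still acts on $f_1$. For ${\bf R}_{10}$ the surviving idempotent must be $e_2$ and the square-zero direction a small multiple of $e_1$, e.g.\ $(e_2,\ t^2e_1,\ tf_1)$ on ${\bf R}_{03}$. ${\bf R}_{07}$ yields neither ${\bf R}_{10}$ nor ${\bf R}_{22}$, since $f_1f_1=0$ there.

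\textbf{The non-degeneration difficulty is misplaced.} The algebras ${\bf R}_{23},\dots,{\bf R}_{36}$ all have $2$-dimensional orbits, so by your own remark nothing needs checking among them. Commutativity of the even part separates ${\bf R}_{01},{\bf R}_{03},{\bf R}_{04},{\bf R}_{05},{\bf R}_{07}$ from them, and the condition you quote for ${\bf R}_{05}$ is never needed.

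The only real issue is ${\bf R}_{03}\not\to{\bf R}_{01},{\bf R}_{04},{\bf R}_{05},{\bf R}_{07}$ (orbit dimension $5$ versus $4$). None of your listed invariants decides it: all five even parts are commutative, and $\dim{\rm A}^2$, $\dim{\rm A}_0{\rm A}_1$, $\dim{\rm A}_1{\rm A}_0$ may drop under degeneration.

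The paper uses a Borel-stable condition satisfied by ${\bf R}_{03}$, namely $c_{11}^1=c_{13}^3=c_{31}^3$. The idempotent-lifting argument also works:
\begin{itemize}
\item In ${\bf R}_{03}$ every idempotent acts on $f_1$ with equal left and right coefficients, and the unit acts as the identity.
\item Nonzero idempotents of $\mathbb{C}\oplus\mathbb{C}$ are nondegenerate, so they lift along a degeneration (unit to unit, primitive to primitive). Both properties therefore pass to any degeneration of ${\bf R}_{03}$ with even part $\mathbb{C}\oplus\mathbb{C}$.
\item ${\bf R}_{04},{\bf R}_{05},{\bf R}_{07}$ each contain a primitive idempotent acting on $f_1$ from one side only, and the unit of $({\bf R}_{01})_0$ annihilates $f_1$.
\end{itemize}
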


\begin{proof}
Taking results from Theorem \ref{pgeo2},  we have to mention only six new degenerations
\begin{center} 
${\bf R}_{03}
{\xrightarrow{ \big(    
e_1,\ 
e_2 ,\  
t f_1 \big)}}  
{\bf R}_{02};$ \   
${\bf R}_{03}
{\xrightarrow{ \big(    
t^4e_1+ e_2,\ 
t^2e_1,\  
tf_1 \big)}}  
{\bf R}_{10};$ \   
${\bf R}_{05}
{\xrightarrow{ \big(    
e_1,\ 
t e_2,\  
f_1 \big)}}  
{\bf R}_{13};$ \\  
${\bf R}_{03}
{\xrightarrow{ \big(    
e_1,\ 
t e_2,\  
f_1 \big)}}  
{\bf R}_{16};$ \  
${\bf R}_{05}
{\xrightarrow{ \big(    
e_1+e_2,\ 
t e_2,\  
f_1 \big)}}  
{\bf R}_{18};$ \  
${\bf R}_{03}
{\xrightarrow{ \big(    
e_1+e_2,\ 
-t^2 e_2,\  
t f_1 \big)}}  
{\bf R}_{22}.$ \  
\end{center}
\noindent
All non-degenerations follow from the following observations.
\begin{enumerate}
    \item[$\bullet$] 
${\bf R}_{03}   \not\to 
\big\{ 
{\bf R}_{01},
{\bf R}_{04},
{\bf R}_{05},
{\bf R}_{07},
{\bf R}_{23},
{\bf R}_{25}, 
{\bf R}_{26}, 
{\bf R}_{29}, {\bf R}_{31}, {\bf R}_{32}, {\bf R}_{33}, {\bf R}_{36} \big\},$ since 
$({\bf R}_{03})_0$ is commutative  and ${\bf R}_{03}$ satisfies 
$\{ c_{11}^1=c_{13}^3=c_{31}^3\}.$

     \item[$\bullet$] 
$\big\{{\bf R}_{05}, {\bf R}_{07} \big\}  \not\to 
\big\{ 
{\bf R}_{23},
{\bf R}_{25}, 
{\bf R}_{26}, 
{\bf R}_{29}, {\bf R}_{31}, {\bf R}_{32}, {\bf R}_{33}, {\bf R}_{36} \big\},$ since 
$({\bf R}_{05})_0$ and  $({\bf R}_{07})_0$ are commutative.

\end{enumerate}

\noindent The dimensions of orbit closures are given below:
\begin{longtable}{lclclclclclclclclcl}
&&&&&&
${\rm dim} \ \overline{\mathcal{O}( {\bf R}_{03})} $& $=$& 
 $5;$\\ 
${\rm dim} \ \overline{\mathcal{O}( {\bf R}_{01})} $& $=$& 
${\rm dim} \ \overline{\mathcal{O}( {\bf R}_{04})} $& $=$& 
${\rm dim} \ \overline{\mathcal{O}( {\bf R}_{05})} $& $=$& 
${\rm dim} \ \overline{\mathcal{O}( {\bf R}_{07})} $& $=$& 
 $4;$\\ 

${\rm dim} \ \overline{\mathcal{O}( {\bf R}_{23})} $& $=$& 
${\rm dim} \ \overline{\mathcal{O}( {\bf R}_{25})} $& $=$& 
${\rm dim} \ \overline{\mathcal{O}( {\bf R}_{26})} $& $=$& 
${\rm dim} \ \overline{\mathcal{O}( {\bf R}_{29})} $& $=$\\ 
${\rm dim} \ \overline{\mathcal{O}( {\bf R}_{31})} $& $=$& 
${\rm dim} \ \overline{\mathcal{O}( {\bf R}_{32})} $& $=$& 
${\rm dim} \ \overline{\mathcal{O}( {\bf R}_{33})} $& $=$& 
${\rm dim} \ \overline{\mathcal{O}( {\bf R}_{36})} $& $=$& 
 $2.$\\ 

\end{longtable}

\end{proof}

\subsubsection{Binary associative     superalgebras}

\begin{theorem}\label{altgeo1}
The variety of complex $3$-dimensional binary associative (alternative)  superalgebras of type $(1,2)$  has 
dimension  $4$   and it has  $13$  irreducible components defined by  
\begin{center}
$\mathcal{C}_1=\overline{\mathcal{O}( {\rm R}_{01}^{{\bf i}\sqrt{3}})},$ \
$\mathcal{C}_2=\overline{\mathcal{O}( {\rm R}_{01}^{-{\bf i}\sqrt{3}})},$ \
$\mathcal{C}_3=\overline{\mathcal{O}( {\rm R}_{04}^\alpha)},$ \
$\mathcal{C}_4=\overline{\mathcal{O}( {\rm R}_{07}^\alpha)},$ \
$\mathcal{C}_5=\overline{\mathcal{O}( {\rm R}_{09})},$ \\
$\mathcal{C}_6=\overline{\mathcal{O}( {\rm R}_{11})},$ \
$\mathcal{C}_7=\overline{\mathcal{O}( {\rm R}_{13})},$ \
$\mathcal{C}_8=\overline{\mathcal{O}( {\rm R}_{15})},$ \
$\mathcal{C}_9=\overline{\mathcal{O}( {\rm R}_{18})},$ \\ 
$\mathcal{C}_{10}=\overline{\mathcal{O}( {\rm R}_{19})},$ \
$\mathcal{C}_{11}=\overline{\mathcal{O}( {\rm R}_{22})},$  \ 
$\mathcal{C}_{12}=\overline{\mathcal{O}( {\rm R}_{23})},$  and
$\mathcal{C}_{13}=\overline{\mathcal{O}( {\rm R}_{24})}.$ \

 \end{center}
In particular, there are only $11$ rigid superalgebras in this variety.
 
\end{theorem}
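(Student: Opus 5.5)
The plan follows the pattern of Theorems \ref{pgeo1}, \ref{bpgeo1} and \ref{assgeo1}. By the corollaries on binary associative superalgebras of type $(1,2)$, the variety consists of the associative superalgebras together with ${\rm R}_{01}^{\pm{\bf i}\sqrt{3}}$, ${\rm R}_{07}^{\alpha\neq 0}$ and ${\rm R}_{28}$. So I would take the component structure of Theorem \ref{assgeo1} as the starting point and only analyse how these new algebras change it.

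First I list the new degenerations that absorb old components or new algebras. From the proof of Theorem \ref{bpgeo1} we already have ${\rm R}_{07}^{\alpha}\to{\rm R}_{06}^{\alpha}$ and ${\rm R}_{07}^{\frac{2}{t}-1}\to{\rm R}_{28}$. Hence the family ${\rm R}_{07}^{\alpha}$, now for all $\alpha$, has orbit closure of dimension $4$. It swallows $\mathcal{C}_2$ of Theorem \ref{assgeo1} and contains ${\rm R}_{28}$. In Theorem \ref{assgeo1} the algebra ${\rm R}_{20}$ was listed as a separate component, so I would look for a degeneration of ${\rm R}_{18}$ or of ${\rm R}_{07}^{\alpha}$ onto ${\rm R}_{20}$. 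If none exists, ${\rm R}_{20}$ has to stay, and the count of $13$ then forces a mismatch that I would recheck. The stated list is missing ${\rm R}_{20}$, ${\rm R}_{06}^\alpha$ and ${\rm R}_{01}^\alpha$ generic, and contains ${\rm R}_{01}^{\pm{\bf i}\sqrt3}$ and ${\rm R}_{07}^\alpha$. So concretely I must exhibit some ${\rm R}_{\ast}\to{\rm R}_{20}$ among the listed algebras, with ${\rm R}_{18}$ as the natural candidate, via a parametrized basis change of the form $(e_1,\ tf_1+f_2,\ \ast)$. The orbit closures of the isolated algebras ${\rm R}_{01}^{\pm{\bf i}\sqrt{3}}$ have dimension $3$: the automorphism group of ${\rm J}_{01}$ is $2$-dimensional, and $\dim G=5$.

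Second, I prove the non-degenerations needed to show the $13$ closures are distinct components. The dimension of ${\rm R}_{07}^{\alpha}$ is $4$, equal to the maximal dimension, so nothing of dimension $4$ degenerates to it. The algebras ${\rm R}_{01}^{\pm{\bf i}\sqrt3}$ cannot be reached from families with $\big(({\rm A})_0\big)^2\ne0$ in the wrong way. More importantly, they cannot be reached from ${\rm R}_{04}^\alpha$, ${\rm R}_{07}^\alpha$ or ${\rm R}_{18}$, because these satisfy Borel-stable conditions such as $(A_0)^2=0$ together with $A_1A_0$ of rank $\le1$ in a fixed direction, or $c_{11}^1=c_{12}^2=c_{21}^2$. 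Conversely, ${\rm R}_{01}^{\pm{\bf i}\sqrt{3}}$ are isolated points of the $1$-parameter family ${\rm R}_{01}^\alpha$, which is not binary associative for other $\alpha$. So their $3$-dimensional closures cannot contain the $3$-dimensional closures of ${\rm R}_{09},{\rm R}_{11},{\rm R}_{13},{\rm R}_{15}$, nor ${\rm R}_{07}^\alpha$. I also must show ${\rm R}_{07}^\alpha\not\to\{{\rm R}_{09},{\rm R}_{11},{\rm R}_{13},{\rm R}_{15},{\rm R}_{19},{\rm R}_{22},{\rm R}_{23},{\rm R}_{24}\}$; this follows from $\big(({\rm R}_{07}^\alpha)_0\big)^2=0$, as in Theorem \ref{bpgeo1}. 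The remaining non-degenerations are inherited from Theorem \ref{assgeo1} via Lemma \ref{gmain}.

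Finally, I count rigid algebras. The components are orbit closures of single algebras except those of the families ${\rm R}_{04}^\alpha$ and ${\rm R}_{07}^\alpha$, which gives $13-2=11$ rigid superalgebras. The main obstacle is the reconciliation with Theorem \ref{assgeo1}: establishing the degeneration onto ${\rm R}_{20}$, and separating ${\rm R}_{01}^{\pm{\bf i}\sqrt3}$ from the dimension-$3$ associative components by a suitable invariant. For the latter, I would first try the $\mathfrak{B}$-stable condition that $A_0A_1$ and $A_1A_0$ lie in a common $1$-dimensional subspace, which is forced in ${\rm R}_{01}$.
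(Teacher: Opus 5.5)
\textbf{The ${\rm R}_{20}$ step fails.} Your plan to realize ${\rm R}_{18}\to{\rm R}_{20}$ cannot work. No algebra on the list degenerates to ${\rm R}_{20}$, so no basis change of the form $(e_1,\ tf_1+f_2,\ \ast)$, or of any other form, will produce it.

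On the orbit of ${\rm R}_{18}$, left and right multiplication by the even basis vector coincide on $V_1$, i.e.\ $c_{1j}^k=c_{j1}^k$ for $j,k\in\{2,3\}$. This condition is closed and $G$-stable: rescaling $e_1$ multiplies both operators by the same scalar, and ${\rm GL}(V_1)$ conjugates both by the same matrix. ${\rm R}_{20}$ violates it, since $e_1f_1=f_1$ but $f_1e_1=0$.

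The other candidates fail too. ${\rm R}_{01}^{\pm{\bf i}\sqrt{3}}$, ${\rm R}_{04}^{\alpha}$ and ${\rm R}_{07}^{\alpha}$ all satisfy $(A_0)^2=0$, whereas $e_1e_1=e_1$ in ${\rm R}_{20}$. Apart from ${\rm R}_{18}$, every binary associative algebra with $(A_0)^2\neq0$ has orbit dimension at most $3=\dim\mathcal{O}({\rm R}_{20})$; note ${\rm Aut}({\rm R}_{20})=\{{\rm diag}(1,x,w)\}$.

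Since ${\rm R}_{20}$ is associative, hence binary associative, $\overline{\mathcal{O}({\rm R}_{20})}$ is an irreducible component missing from the list. The paper's own Theorem \ref{b-11geo1} already forces this, since there ${\rm R}_{20}$ is a component of the larger binary $(-1,1)$ variety. So the statement as written cannot be proved. The correct answer is $14$ components (the $13$ listed plus $\overline{\mathcal{O}({\rm R}_{20})}$) and $12$ rigid superalgebras. Your first reaction, that ${\rm R}_{20}$ ``has to stay'', was right. The paper's proof starts from Theorem \ref{pgeo1} and only adds ${\rm R}_{07}^{\alpha}\to{\rm R}_{06}^{\alpha}$ and ${\rm R}_{07}^{\frac{2}{t}-1}\to{\rm R}_{28}$, so it never accounts for ${\rm R}_{20}$ either.

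\textbf{Wrong orbit dimension for ${\rm R}_{01}^{\pm{\bf i}\sqrt{3}}$.} The correct value is $\dim\overline{\mathcal{O}({\rm R}_{01}^{\pm{\bf i}\sqrt{3}})}=4$, not $3$. For $\alpha\neq3$ the parameter $a_{32}$ of ${\rm Aut}({\rm J}_{01})$ does not fix $\theta$: it shifts the $\Delta_{22}$-coefficient, which is exactly how the paper normalizes $\alpha_1=0$. Hence ${\rm Aut}({\rm R}_{01}^{\alpha})=\{{\rm diag}(a,\epsilon,\epsilon a):\epsilon^2=1\}$ is one-dimensional.

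The separations you want still hold, but for different reasons than the ones you give:
\begin{itemize}
\item $(A_0)^2=0$ rules out every target with $e_1e_1\neq0$.
\item The closures of ${\rm R}_{01}^{{\bf i}\sqrt{3}}$, ${\rm R}_{01}^{-{\bf i}\sqrt{3}}$, ${\rm R}_{04}^{\alpha}$, ${\rm R}_{07}^{\alpha}$ and ${\rm R}_{18}$ are pairwise distinct irreducible sets of the same dimension $4$, so none contains another.
\end{itemize}
In particular, the two ${\rm R}_{01}$ orbits are top-dimensional components, consistent with the variety having dimension $4$.
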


\begin{proof}

Taking results from Theorem \ref{pgeo1},  we have to mention only two new degenerations
\begin{center}
${\rm R}_{07}^{\alpha} \ {\xrightarrow{ \big(    
e_1,\ 
t f_1,\  
tf_2 \big)}}\  {\rm R}_{06}^\alpha;$ \ 
${\rm R}_{07}^{\frac{2}{t}-1} \ {\xrightarrow{ \big(    
e_1,\ 
t f_1,\  
\frac{2}{t}f_2 \big)}}\  {\rm R}_{28}.$ \\

\end{center}
\noindent
All non-degenerations follow from 
$\big( ({\rm R}_{01}^{\alpha})_0 \big)^2 = 
\big( ({\rm R}_{07}^{\alpha})_0 \big)^2 =0.$

\end{proof}

\begin{theorem}\label{altgeo2}
The variety of complex $3$-dimensional binary associative superalgebras of type $(2,1)$  has 
dimension  $5$   and it has  $13$  irreducible components defined by  
\begin{center}
$\mathcal{C}_1=\overline{\mathcal{O}( {\bf R}_{01})},$ \
$\mathcal{C}_2=\overline{\mathcal{O}( {\bf R}_{03})},$ \
$\mathcal{C}_3=\overline{\mathcal{O}( {\bf R}_{04})},$ \
$\mathcal{C}_4=\overline{\mathcal{O}( {\bf R}_{05})},$ \
$\mathcal{C}_5=\overline{\mathcal{O}( {\bf R}_{07})},$ \\
$\mathcal{C}_6=\overline{\mathcal{O}( {\bf R}_{23})},$ \
$\mathcal{C}_7=\overline{\mathcal{O}( {\bf R}_{25})},$ \
$\mathcal{C}_8=\overline{\mathcal{O}( {\bf R}_{28})},$ \
$\mathcal{C}_9=\overline{\mathcal{O}( {\bf R}_{30})},$ \\
$\mathcal{C}_{10}=\overline{\mathcal{O}( {\bf R}_{31})},$ \
$\mathcal{C}_{11}=\overline{\mathcal{O}( {\bf R}_{32})},$ \ 
$\mathcal{C}_{12}=\overline{\mathcal{O}( {\bf R}_{33})},$ \ and 
$\mathcal{C}_{13}=\overline{\mathcal{O}( {\bf R}_{36})}.$ \\
 
 \end{center}
In particular, there are only $13$ rigid superalgebras in this variety.
 
\end{theorem}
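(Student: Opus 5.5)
The variety of binary associative superalgebras of type $(2,1)$ is the union of the associative ones (Theorem \ref{assgeo2}) and the two extra algebras ${\bf R}_{28}$ and ${\bf R}_{30}$ from the corresponding corollary. So I will start from the $13$ components of Theorem \ref{assgeo2} and decide how ${\bf R}_{28}$ and ${\bf R}_{30}$ sit relative to them. The claimed list replaces $\overline{\mathcal{O}({\bf R}_{26})}$ and $\overline{\mathcal{O}({\bf R}_{29})}$ by $\overline{\mathcal{O}({\bf R}_{28})}$ and $\overline{\mathcal{O}({\bf R}_{30})}$. The plan therefore has three parts: show that ${\bf R}_{28}\to{\bf R}_{26}$ and ${\bf R}_{30}\to{\bf R}_{29}$; show that ${\bf R}_{28}$ and ${\bf R}_{30}$ lie in no other component; and recompute orbit dimensions.

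\emph{Degenerations.} ${\bf R}_{28}$ is ${\bf R}_{26}$ with the extra product $f_1f_1=e_2$, and ${\bf R}_{30}$ is ${\bf R}_{29}$ with $f_1f_1=e_2$. The base change $(e_1,\ e_2,\ tf_1)$ rescales $f_1f_1$ by $t^2$ and leaves the other structure constants fixed. Letting $t\to 0$ gives ${\bf R}_{28}\to{\bf R}_{26}$ and ${\bf R}_{30}\to{\bf R}_{29}$. All remaining degenerations are inherited from Theorem \ref{assgeo2}, since those were already established inside the associative subvariety.

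\emph{Non-degenerations.} I must show that no algebra whose orbit closure is still listed degenerates to ${\bf R}_{28}$ or ${\bf R}_{30}$. This is where I would exploit that those two algebras are non-associative. The remaining candidates are all associative, and the associative variety is closed, so any degeneration from one of them lands among associative algebras. Hence ${\bf R}_{28},{\bf R}_{30}$ lie in no component other than their own orbit closures. Conversely, I need ${\bf R}_{28}\not\to{\bf R}_{30}$, ${\bf R}_{30}\not\to{\bf R}_{28}$, and that neither reaches the other rigid algebras. Dimension counting should handle most of this: I expect $\dim\overline{\mathcal{O}({\bf R}_{28})}=\dim\overline{\mathcal{O}({\bf R}_{30})}=3$, one more than ${\bf R}_{26},{\bf R}_{29}$, because the derivation algebra loses the $f_1$-scaling. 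The other listed algebras have orbit closures of dimension at least $2$. For those of dimension $2$ I will use invariants via Lemma \ref{gmain}. First, $({\bf R}_{28})_0$ and $({\bf R}_{30})_0$ are non-commutative ($e_2e_1\neq e_1e_2$), so they cannot reach algebras with commutative even part. Second, a closed Borel-stable condition such as $\{c_{11}^1=c_{21}^2,\ c_{12}^2=0\}$ for ${\bf R}_{28}$ (respectively $c_{11}^1=c_{12}^2,\ c_{21}^2=0$ for ${\bf R}_{30}$) separates them from ${\bf R}_{23},{\bf R}_{25},{\bf R}_{31},{\bf R}_{32},{\bf R}_{33},{\bf R}_{36}$ and from each other.

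\emph{Main obstacle.} The delicate step is choosing Borel-invariant conditions that simultaneously distinguish ${\bf R}_{28}$ from ${\bf R}_{25},{\bf R}_{31},{\bf R}_{33}$, which share $e_2e_1=e_2$, and ${\bf R}_{30}$ from ${\bf R}_{23},{\bf R}_{32},{\bf R}_{36}$. The key extra condition I would try first is the one involving $c_{13}^3,c_{31}^3$: ${\bf R}_{28}$ has $c_{13}^3=c_{11}^1$, $c_{31}^3=0$, and ${\bf R}_{30}$ the reverse. I would check this against each target's structure constants in a suitable basis ordering. Once this is done, the dimension of the variety is $5$, coming from ${\bf R}_{03}$. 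Every listed algebra is rigid, because the listed orbit closures are pairwise non-containing, which gives $13$ components and $13$ rigid superalgebras.
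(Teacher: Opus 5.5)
Your plan is the paper's proof almost verbatim. It has the same degenerations ${\bf R}_{28}\to{\bf R}_{26}$ and ${\bf R}_{30}\to{\bf R}_{29}$ via $(e_1,e_2,tf_1)$, the same orbit dimensions, and exactly the paper's separating conditions: $\{c_{11}^1=c_{13}^3=c_{21}^2,\ c_{31}^3=c_{12}^2=0\}$ for ${\bf R}_{28}$ and the mirror set for ${\bf R}_{30}$. Your extra remarks are correct: associative algebras cannot degenerate to the non-associative ${\bf R}_{28},{\bf R}_{30}$, and ${\bf R}_{28}\not\to{\bf R}_{30}$ by equal orbit dimension.

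However, the step you flagged as the main obstacle really does fail with these conditions, and the paper's written justification has the same defect. Each target ${\bf R}_{23},{\bf R}_{25},{\bf R}_{31},{\bf R}_{32},{\bf R}_{33},{\bf R}_{36}$ satisfies $e_2e_2=0$, $e_2f_1=f_1e_2=0$ and $e_1e_2,\,e_2e_1\in\langle e_2\rangle$. So in the reordered basis $(e_2,e_1,f_1)$ the constants $c_{11}^1,c_{11}^2,c_{13}^3,c_{21}^2,c_{31}^3,c_{12}^2$ all vanish. Hence every target has a representative satisfying your condition set for ${\bf R}_{28}$, and also the one for ${\bf R}_{30}$, and Lemma \ref{gmain} gives no obstruction. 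This is not merely a B-stability issue. Adding $c_{11}^2=0$ makes the set stable under the Borel subgroup fixing $\langle e_1\rangle$, and it still contains these representatives. Checking the conditions only in one chosen basis of each target is not enough; you must rule out every basis.

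The non-degenerations are nevertheless true, and you can prove them with basis-free conditions. These are linear in the structure constants and invariant under $\mathrm{GL}(V_0)\times \mathrm{GL}(V_1)$, so each defines a closed $G$-stable set containing the whole orbit closure.

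For ${\bf R}_{28}$, take $A_1A_0=0$ together with $\operatorname{tr}(R_x|_{A_0})=2\operatorname{tr}(L_x|_{A_0})=2\operatorname{tr}(L_x|_{A_1})$ for all $x\in A_0$. All three quantities equal $2a$ for $x=ae_1+be_2$.
\begin{itemize}
\item ${\bf R}_{31},{\bf R}_{33},{\bf R}_{36}$ violate $A_1A_0=0$.
\item ${\bf R}_{23},{\bf R}_{32}$ violate the first equality, since there $\operatorname{tr}R_{e_1}|_{A_0}=1$ and $\operatorname{tr}L_{e_1}|_{A_0}=2$.
\item ${\bf R}_{25}$ violates the second, since there $\operatorname{tr}R_{e_1}|_{A_0}=2$ and $\operatorname{tr}L_{e_1}|_{A_1}=0$.
\end{itemize}

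For ${\bf R}_{30}$, use the mirror conditions $A_0A_1=0$ and $\operatorname{tr}(L_x|_{A_0})=2\operatorname{tr}(R_x|_{A_0})=2\operatorname{tr}(R_x|_{A_1})$. The first excludes ${\bf R}_{32},{\bf R}_{33},{\bf R}_{36}$, the first trace equality excludes ${\bf R}_{25},{\bf R}_{31}$, and the second excludes ${\bf R}_{23}$.

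With this replacement the rest of your argument goes through.
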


\begin{proof}
Taking results from Theorem \ref{assgeo2},  we have to mention only six new degenerations
\begin{center} 
${\bf R}_{28}
{\xrightarrow{ \big(    
e_1,\ 
e_2 ,\  
t f_1 \big)}}  
{\bf R}_{26};$ \   
${\bf R}_{30}
{\xrightarrow{ \big(    
e_1,\ 
e_2 ,\  
t f_1 \big)}}  
{\bf R}_{29}.$ \   
\end{center}
\noindent
All non-degenerations follow from the following observations.
\begin{enumerate}
    \item[$\bullet$] 
${\bf R}_{28}   \not\to 
\left\{\begin{array}{l} 
{\bf R}_{23}, {\bf R}_{25},  {\bf R}_{31},\\ 
{\bf R}_{32}, {\bf R}_{33}, {\bf R}_{36}
 \end{array}\right\},$ since 
${\bf R}_{28}$   satisfies 
$\big\{ c_{11}^1=c_{13}^3=c_{21}^2, \ c_{31}^3=c_{12}^2=0\big\}.$  

  \item[$\bullet$] ${\bf R}_{30}   \not\to 
\left\{\begin{array}{l} 
{\bf R}_{23}, {\bf R}_{25},  {\bf R}_{31},\\ 
{\bf R}_{32}, {\bf R}_{33}, {\bf R}_{36}
 \end{array}\right\},$ since 
${\bf R}_{30}$   satisfies 
$\big\{ c_{11}^1=c_{31}^3=c_{12}^2\ c_{13}^3=c_{21}^2=0\big\}.$

\end{enumerate}

\noindent The dimensions of orbit closures are given below:
\begin{longtable}{lclclclclclclclclcl}
&&&&&&
${\rm dim} \ \overline{\mathcal{O}( {\bf R}_{03})} $& $=$& 
 $5;$\\ 
${\rm dim} \ \overline{\mathcal{O}( {\bf R}_{01})} $& $=$& 
${\rm dim} \ \overline{\mathcal{O}( {\bf R}_{04})} $& $=$& 
${\rm dim} \ \overline{\mathcal{O}( {\bf R}_{05})} $& $=$& 
${\rm dim} \ \overline{\mathcal{O}( {\bf R}_{07})} $& $=$& 
 $4;$\\ 

&&&&${\rm dim} \ \overline{\mathcal{O}( {\bf R}_{28})} $& $=$& 
${\rm dim} \ \overline{\mathcal{O}( {\bf R}_{30})} $& $=$&
$3;$\\

&&${\rm dim} \ \overline{\mathcal{O}( {\bf R}_{23})} $& $=$& 
${\rm dim} \ \overline{\mathcal{O}( {\bf R}_{25})} $& $=$& 
${\rm dim} \ \overline{\mathcal{O}( {\bf R}_{31})} $& $=$\\ 
&&
${\rm dim} \ \overline{\mathcal{O}( {\bf R}_{32})} $& $=$& 
${\rm dim} \ \overline{\mathcal{O}( {\bf R}_{33})} $& $=$& 
${\rm dim} \ \overline{\mathcal{O}( {\bf R}_{36})} $& $=$& 
 $2.$\\ 

\end{longtable}

\end{proof}

\subsubsection{$\big(-1,1\big)$-superalgebras}

\begin{theorem}\label{-11geo1}
The variety of complex $3$-dimensional $\big(-1,1\big)$-superalgebras of type $(1,2)$  has 
dimension  $4$   and it has  $12$  irreducible components defined by  
\begin{center}
$\mathcal{C}_1=\overline{\mathcal{O}( {\rm R}_{01}^3)},$ \
$\mathcal{C}_2=\overline{\mathcal{O}( {\rm R}_{04}^\alpha)},$ \
$\mathcal{C}_3=\overline{\mathcal{O}( {\rm R}_{09})},$ \
$\mathcal{C}_4=\overline{\mathcal{O}( {\rm R}_{10})},$ \
$\mathcal{C}_5=\overline{\mathcal{O}( {\rm R}_{14})},$ \
$\mathcal{C}_6=\overline{\mathcal{O}( {\rm R}_{15})},$ \\
$\mathcal{C}_7=\overline{\mathcal{O}( {\rm R}_{18})},$ \ 
$\mathcal{C}_8=\overline{\mathcal{O}( {\rm R}_{19})},$ \
$\mathcal{C}_9=\overline{\mathcal{O}( {\rm R}_{20})},$ \
$\mathcal{C}_{10}=\overline{\mathcal{O}( {\rm R}_{22})},$  \ 
$\mathcal{C}_{11}=\overline{\mathcal{O}( {\rm R}_{23})},$  and
$\mathcal{C}_{12}=\overline{\mathcal{O}( {\rm R}_{24})}.$ \

 \end{center}
In particular, there are only $11$ rigid superalgebras in this variety.
 
\end{theorem}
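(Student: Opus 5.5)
By the corollary on $\big(-1,1\big)$-superalgebras of type $(1,2)$, such a superalgebra is either associative or isomorphic to ${\rm R}_{01}^{3}$, ${\rm R}_{10}$ or ${\rm R}_{14}$. The variety is therefore the union of the associative variety of Theorem \ref{assgeo1} and the closures of the orbits of these three new superalgebras. I plan to reuse the component structure of Theorem \ref{assgeo1} and determine only how ${\rm R}_{01}^{3}$, ${\rm R}_{10}$ and ${\rm R}_{14}$ interact with the twelve associative components. The claimed list keeps ten of them, loses $\mathcal{C}_4=\overline{\mathcal{O}({\rm R}_{11})}$ and $\mathcal{C}_5=\overline{\mathcal{O}({\rm R}_{13})}$, and adds ${\rm R}_{01}^3$, ${\rm R}_{10}$, ${\rm R}_{14}$. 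So I must show three things: ${\rm R}_{11}$ and ${\rm R}_{13}$ lie in the closures of the new orbits, the new superalgebras are not in the closure of any other component, and the surviving associative components are not swallowed.

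\textbf{Degenerations.} The natural guesses come from the algebraic classification. ${\rm R}_{10}$ and ${\rm R}_{11}$ share the Jordan part type $e_1e_1=e_1$, $e_1f_1=f_1$-like, and ${\rm R}_{10}$ arises from $\eta_1$ with a nonzero parameter that scales away. Hence I expect ${\rm R}_{10}\to{\rm R}_{09}$-type limits. For ${\rm R}_{11}$ I would try a basis change such as $(e_1,\ f_1+f_2,\ tf_2)$ or $(e_1,\ tf_1,\ f_1-f_2)$ and tune exponents until $e_1f_1=f_1$ survives and the rest vanishes. Similarly ${\rm R}_{14}\to{\rm R}_{13}$ should follow by rescaling $f_1\mapsto tf_1$, which kills the parameter $\alpha_1$ in $\eta_1=(0,\alpha_1\Delta_{13},\tfrac12\Delta_{13})$ through the orbit formula $\alpha_1a_{22}^{-1}a_{33}$. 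The same trick gives ${\rm R}_{14}\to{\rm R}_{15}$-style limits if needed. For ${\rm R}_{01}^3$ I only need that it is not a degeneration of anything; what it degenerates to does not affect the component count, though I would record ${\rm R}_{01}^3\to {\rm R}_{04}^{\alpha}$-family members if convenient.

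\textbf{Non-degenerations and dimensions.} I will compute the orbit closure dimensions as $9-\dim\operatorname{Der}$. I expect ${\rm R}_{01}^{3}$, ${\rm R}_{10}$, ${\rm R}_{14}$ and ${\rm R}_{04}^{\alpha}$ (with its parameter), ${\rm R}_{18}$ to have dimension $4$ or $3$. No superalgebra of dimension at most that of a given one can degenerate to it, except for the parametric family. To separate the rest I use Lemma \ref{gmain} with invariants of the type already used in the paper:
\begin{itemize}
\item $\big(({\rm R}_{01}^3)_0\big)^2=0$, so ${\rm R}_{01}^3$ cannot reach anything with $e_1e_1\neq 0$;
\item ${\rm R}_{10}$ and ${\rm R}_{14}$ satisfy linear conditions on the structure constants, for instance $c_{11}^1=c_{21}^2$ with $c_{12}^2=c_{13}^3=c_{31}^3=0$ for ${\rm R}_{10}$, and $c_{11}^1=c_{12}^2=c_{13}^3=c_{21}^2$ for ${\rm R}_{14}$. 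These block ${\rm R}_{18}$, ${\rm R}_{19}$, ${\rm R}_{20}$, ${\rm R}_{22}$, ${\rm R}_{23}$, ${\rm R}_{24}$, ${\rm R}_{15}$ and ${\rm R}_{09}$ as needed.
\end{itemize}
Conversely, no associative superalgebra degenerates to a non-associative one, since the associative variety is closed. This proves that ${\rm R}_{01}^3$, ${\rm R}_{10}$ and ${\rm R}_{14}$ each generate a component unless one of them degenerates to another. That case is excluded by the invariants above together with the equal dimensions. Finally, dimension count gives the overall dimension $4$, attained by ${\rm R}_{04}^{\alpha}$. The eleven rigid ones are the components other than the one-parameter family ${\rm R}_{04}^\alpha$.

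The main obstacle I expect is finding explicit degenerations ${\rm R}_{10}\to{\rm R}_{11}$ and ${\rm R}_{14}\to{\rm R}_{13}$ (or from ${\rm R}_{01}^3$). The Jordan parts differ ($J_{04}$ vs $J_{05}$), so a pure rescaling may not work and a nondiagonal change mixing $f_1,f_2$ with a parameter $t$ may be required. I would solve for it by writing a general $t$-dependent lower-triangular basis and matching leading terms.
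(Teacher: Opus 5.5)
\paragraph{Missing degeneration onto ${\rm R}_{06}^{\alpha}$.} The listed components drop three of the twelve associative ones, not two: $\overline{\mathcal{O}({\rm R}_{06}^{\alpha})}$ disappears as well. Your plan (ten survivors plus three new orbits) would therefore produce $13$ components. You need a new algebra degenerating onto the whole family. The paper uses ${\rm R}_{14}\to{\rm R}_{06}^{\alpha}$ via the basis $\big(\frac{t}{\alpha+1}e_1,\ 2(\alpha-1)f_1+(\alpha^2-1)f_2,\ tf_1+tf_2\big)$. This is valid for $\alpha\neq\pm1$, and the two remaining values follow because $\overline{\mathcal{O}({\rm R}_{14})}$ is closed.

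Two smaller points. For ${\rm R}_{14}\to{\rm R}_{13}$ you need $(e_1,f_1,tf_2)$; with $f_1\mapsto tf_1$ the product $e_1f_2=f_1+f_2$ blows up. And ${\rm R}_{10},{\rm R}_{14}\not\to{\rm R}_{01}^{3}$ does not follow from dimensions ($4$ versus $3$, not equal). Nor does it follow from your linear conditions, since ${\rm R}_{01}^{3}$ satisfies them. It follows from the vanishing of all odd$\cdot$odd products in ${\rm R}_{10}$ and ${\rm R}_{14}$.

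\paragraph{${\rm R}_{09}$ versus ${\rm R}_{11}$.} This is the serious problem. Write $e_1e_1=c\,e_1$ and let $L,R$ be left and right multiplication by $e_1$ on $V_1$. Under $(\lambda,P)\in G$ these become $\lambda^{-1}c$, $\lambda^{-1}PLP^{-1}$, $\lambda^{-1}PRP^{-1}$. Hence $\{c=0\}$ and $\{\operatorname{tr}L=p\,c,\ \operatorname{tr}R=q\,c\}$ are closed $G$-stable sets. ${\rm R}_{11}$ has $c\neq0$ and $(p,q)=(1,0)$. By contrast, ${\rm R}_{10}$ has $(0,1)$, ${\rm R}_{14}$ has $(2,1)$, ${\rm R}_{01}^{3}$ has $c=0$, and no other $(-1,1)$-superalgebra of type $(1,2)$ has $c\neq0$ and $(p,q)=(1,0)$. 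So the degeneration onto ${\rm R}_{11}$ you intend to find does not exist, and $\overline{\mathcal{O}({\rm R}_{11})}$ is an irreducible component.

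On the other hand, ${\rm R}_{09}$ satisfies exactly the conditions you impose on ${\rm R}_{10}$. The basis $(e_1,tf_1,f_2)$ of ${\rm R}_{10}$ gives $E_1E_1=E_1$, $E_1F_1=tF_2\to0$, $F_1E_1=F_1-tF_2\to F_1$, with all other products zero. That is, ${\rm R}_{10}\to{\rm R}_{09}$; it is just $\alpha_1\to0$ in $\eta_1$ over ${\rm J}_{04}$, as you half-suspected. Hence $\overline{\mathcal{O}({\rm R}_{09})}$ is not a component, and the statement as printed cannot be proved.

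The correct list has ${\rm R}_{11}$ in place of ${\rm R}_{09}$, which still gives dimension $4$, $12$ components and $11$ rigid superalgebras. The paper's own proof contains the same slip. Its degeneration ``${\rm R}_{10}\to{\rm R}_{11}$'' uses exactly the basis $(e_1,tf_1,f_2)$, which lands on ${\rm R}_{09}$. Its invariant $\{c_{11}^1=c_{21}^2,\ c_{12}^1=c_{13}^3=c_{31}^3=0\}$ for ${\rm R}_{10}$ is also satisfied by ${\rm R}_{09}$.
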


\begin{proof}
Taking results from Theorem \ref{assgeo1},  we have to mention only three new degenerations
\begin{center}
${\rm R}_{14} \ {\xrightarrow{ \big(    
\frac{t}{\alpha+1}e_1,\ 
2 (\alpha-1)f_1+\alpha^2-1f_2,\  
tf_1+tf_2 \big)}} \ {\rm R}_{06}^\alpha;$  \ 
${\rm R}_{10} \ {\xrightarrow{ \big(    
 e_1,\ 
tf_1,\  
f_2 \big)}} \ {\rm R}_{11};$ \ 
${\rm R}_{14} \ {\xrightarrow{ \big(    
 e_1,\ 
f_1,\  
tf_2 \big)}} \ {\rm R}_{13}.$

\end{center}
 
\noindent
All non-degenerations follow from the following observations.
\begin{enumerate}

    \item[$\bullet$] 
$\big\{{\rm R}_{10},   {\rm R}_{14} \big\}   \not\to 
  {\rm R}_{01}^3,$  since $\big(({\rm R}_{10})_1\big)^2= \big(( {\rm R}_{14})_1\big)^2=0.$

    \item[$\bullet$] 
$ {\rm R}_{01}^3    \not\to 
\big\{    {\rm R}_{19},  {\rm R}_{22}, 
{\rm R}_{23},  {\rm R}_{24} \big\},$ since 
 $\big( ({\rm R}_{01}^3 )_0\big)^2=0.$

  \item[$\bullet$] 
$ {\rm R}_{10}    \not\to 
  \left\{ 
\begin{array}{l}
{\rm R}_{09},   {\rm R}_{15}, 
  {\rm R}_{19},  {\rm R}_{20},\\ {\rm R}_{22}, 
{\rm R}_{23},  {\rm R}_{24} 
\end{array}\right\},$ since 
 ${\rm R}_{10}$   satisfies  
 $\big\{ c_{11}^1=c_{21}^2, \ c_{12}^1=c_{13}^3=c_{31}^3=0 \big\}.$

  \item[$\bullet$] 
$ {\rm R}_{14}    \not\to 
  \left\{ 
\begin{array}{l}{\rm R}_{09},   {\rm R}_{15}, 
  {\rm R}_{19},  {\rm R}_{20},\\ {\rm R}_{22}, 
{\rm R}_{23},  {\rm R}_{24} 
\end{array}\right\},$ since 
 ${\rm R}_{14}$   satisfies  
 $\big\{ c_{11}^1=c_{21}^2, \  c_{12}^1=c_{13}^3=c_{31}^3=0 \big\}.$

\end{enumerate}

\noindent The dimensions of orbit closures are given below:
\begin{longtable}{lclclclclclclclclcl}
 
${\rm dim} \ \overline{\mathcal{O}( {\rm R}_{04}^{\alpha})} $& $=$&
${\rm dim} \ \overline{\mathcal{O}( {\rm R}_{10})}  $&$ = $& 
${\rm dim} \ \overline{\mathcal{O}( {\rm R}_{14})}  $&$ = $& 
${\rm dim} \ \overline{\mathcal{O}( {\rm R}_{18})}  $&$ = $& 
 $4;$\\ 

 ${\rm dim} \ \overline{\mathcal{O}( {\rm R}_{01}^{3})} $& $=$&
 ${\rm dim} \ \overline{\mathcal{O}( {\rm R}_{09})}  $&$ = $&
${\rm dim} \ \overline{\mathcal{O}( {\rm R}_{15})}  $&$ = $&
${\rm dim} \ \overline{\mathcal{O}( {\rm R}_{20})}  $&$ = $& $3;$\\

${\rm dim} \ \overline{\mathcal{O}( {\rm R}_{19})}  $&$ = $&
${\rm dim} \ \overline{\mathcal{O}( {\rm R}_{22})}  $&$ = $&
${\rm dim} \ \overline{\mathcal{O}( {\rm R}_{23})}  $&$ = $&
${\rm dim} \ \overline{\mathcal{O}( {\rm R}_{24})}  $&$ = $& $1.$\\
\end{longtable}

\end{proof}

\begin{theorem}\label{-11geo2}
The variety of complex $3$-dimensional $\big(-1,1\big)$-superalgebras of type $(2,1)$  has 
dimension  $5$   and it has  $13$  irreducible components defined by  
\begin{center}
$\mathcal{C}_1=\overline{\mathcal{O}( {\bf R}_{01})},$ \
$\mathcal{C}_2=\overline{\mathcal{O}( {\bf R}_{03})},$ \
$\mathcal{C}_3=\overline{\mathcal{O}( {\bf R}_{04})},$ \
$\mathcal{C}_4=\overline{\mathcal{O}( {\bf R}_{05})},$ \
$\mathcal{C}_5=\overline{\mathcal{O}( {\bf R}_{07})},$ \\
$\mathcal{C}_6=\overline{\mathcal{O}( {\bf R}_{23})},$ \
$\mathcal{C}_7=\overline{\mathcal{O}( {\bf R}_{25})},$ \
$\mathcal{C}_8=\overline{\mathcal{O}( {\bf R}_{26})},$ \
$\mathcal{C}_9=\overline{\mathcal{O}( {\bf R}_{29})},$ \\
$\mathcal{C}_{10}=\overline{\mathcal{O}( {\bf R}_{31})},$ \
$\mathcal{C}_{11}=\overline{\mathcal{O}( {\bf R}_{32})},$ \ 
$\mathcal{C}_{12}=\overline{\mathcal{O}( {\bf R}_{33})},$ \ and 
$\mathcal{C}_{13}=\overline{\mathcal{O}( {\bf R}_{36})}.$ \\
 
 \end{center}
In particular, there are only $13$ rigid superalgebras in this variety.
 
\end{theorem}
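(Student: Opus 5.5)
The statement claims that the geometric picture for $(-1,1)$-superalgebras of type $(2,1)$ coincides with the one for associative superalgebras of type $(2,1)$. The proof should reduce to the associative case. By the corollary above on $(-1,1)$-superalgebras of type $(2,1)$, every complex $3$-dimensional $\big(-1,1\big)$-superalgebra of type $(2,1)$ is associative. Conversely, every associative superalgebra is right alternative and Lie-admissible, since all associators vanish. So, as subsets of $\operatorname{Hom}(V\otimes V,V)$ with $\dim V_0=2$ and $\dim V_1=1$, the variety of $(-1,1)$-superalgebras of type $(2,1)$ and the variety of associative superalgebras of type $(2,1)$ have the same points.

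Both are Zariski-closed subsets of the same affine space, each defined by its own polynomial superidentities. Having the same set of points, they are the same closed subset. In particular, they have the same orbits, the same orbit closures and the same decomposition into irreducible components.

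Therefore Theorem \ref{assgeo2} applies verbatim. It gives dimension $5$ and the thirteen components $\overline{\mathcal{O}({\bf R}_{01})},\ \overline{\mathcal{O}({\bf R}_{03})},\ldots,\overline{\mathcal{O}({\bf R}_{36})}$, each the closure of a single orbit, so all thirteen superalgebras are rigid. The degenerations and non-degenerations listed in the proof of Theorem \ref{assgeo2}, together with the orbit-closure dimensions given there, transfer unchanged.

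The only real step is the input from the algebraic classification that no non-associative $(-1,1)$-superalgebra of type $(2,1)$ exists, and this is taken from the corollary. If one wanted to check it directly, the task is to test the super Lie-admissibility identity on each non-associative ${\bf R}_i$ from Theorem A2. A natural choice of triples for each ${\bf R}_i$ is $(e_1,e_2,e_1)$ or $(f_1,f_1,e_j)$. That check is where I would expect any difficulty to lie.
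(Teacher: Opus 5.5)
Your proposal is correct and is exactly the reasoning behind the statement in the paper: the paper gives no separate proof, because the corollary shows every $3$-dimensional $\big(-1,1\big)$-superalgebra of type $(2,1)$ is associative, so the two varieties coincide as closed subsets and the component list is that of Theorem \ref{assgeo2} verbatim. Your suggested direct check also goes through: for every non-associative ${\bf R}_i$ of Theorem A2 the Lie-admissibility identity already fails at $(f_1,f_1,e_1)$, except for ${\bf R}_{34}$, where $(f_1,f_1,e_2)$ works instead.
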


\subsubsection{Binary $\big(-1,1\big)$-superalgebras}

\begin{theorem}\label{b-11geo1}
The variety of complex $3$-dimensional binary $\big(-1,1\big)$-superalgebras of type $(1,2)$  has 
dimension  $5$   and it has  $12$  irreducible components defined by  
\begin{center}
$\mathcal{C}_1=\overline{\mathcal{O}( {\rm R}_{01}^\alpha)},$ \
$\mathcal{C}_2=\overline{\mathcal{O}( {\rm R}_{08})},$ \
$\mathcal{C}_3=\overline{\mathcal{O}( {\rm R}_{10})},$ \
$\mathcal{C}_4=\overline{\mathcal{O}( {\rm R}_{12})},$ \
$\mathcal{C}_5=\overline{\mathcal{O}( {\rm R}_{14})},$ \
$\mathcal{C}_6=\overline{\mathcal{O}( {\rm R}_{15})},$ \\
$\mathcal{C}_7=\overline{\mathcal{O}( {\rm R}_{18})},$ \ 
$\mathcal{C}_8=\overline{\mathcal{O}( {\rm R}_{19})},$ \
$\mathcal{C}_9=\overline{\mathcal{O}( {\rm R}_{20})},$ \
$\mathcal{C}_{10}=\overline{\mathcal{O}( {\rm R}_{22})},$  \ 
$\mathcal{C}_{11}=\overline{\mathcal{O}( {\rm R}_{23})},$  and
$\mathcal{C}_{12}=\overline{\mathcal{O}( {\rm R}_{24})}.$ \

 \end{center}
In particular, there are only $11$ rigid superalgebras in this variety.
 
\end{theorem}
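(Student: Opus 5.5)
The proof will follow the same scheme as Theorems \ref{pgeo1}--\ref{-11geo1}. First we identify the algebras in the variety. By the corollaries on $(-1,1)$- and binary $(-1,1)$-superalgebras of type $(1,2)$, the variety consists of:
\begin{itemize}
\item the $(-1,1)$-superalgebras, whose geometric classification is Theorem \ref{-11geo1};
\item the new algebras ${\rm R}_{01}^{\alpha\notin\{-1,3\}}$, ${\rm R}_{02}$, ${\rm R}_{03}$, ${\rm R}_{08}$, ${\rm R}_{12}$ and ${\rm R}_{28}$.
\end{itemize}
So we take the components of Theorem \ref{-11geo1} as given and decide how the new algebras change the picture. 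In the new list, ${\rm R}_{01}^3$ is absorbed into the one-parameter family $\mathcal{C}_1=\overline{\mathcal{O}({\rm R}_{01}^\alpha)}$. The components $\overline{\mathcal{O}({\rm R}_{04}^\alpha)}$ and $\overline{\mathcal{O}({\rm R}_{09})}$ must be swallowed by new ones, and ${\rm R}_{08}$ and ${\rm R}_{12}$ appear as new rigid algebras.

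The positive part is a list of explicit degenerations, each given by a parametrized basis change $E_i^t$ as in the previous proofs.
\begin{itemize}
\item ${\rm R}_{01}^\alpha\to{\rm R}_{02}$, ${\rm R}_{01}^\alpha\to{\rm R}_{03}$, and a family degeneration ${\rm R}_{01}^{\alpha(t)}\to{\rm R}_{04}^\beta$. The last one is plausible: scaling $e_1\mapsto t e_1$ kills the $e_1f_1$, $f_1e_1$ products and leaves an odd-odd part of type ${\rm R}_{04}$ after a suitable reparametrization of $\alpha$.
\item Either ${\rm R}_{01}^\alpha$ or ${\rm R}_{08}$ degenerates to ${\rm R}_{28}$ and ${\rm R}_{07}$-type algebras if these lie in the variety; otherwise only to what is needed.
\item ${\rm R}_{12}\to{\rm R}_{11}$ via $(e_1,tf_1,f_2)$, and ${\rm R}_{12}$ or ${\rm R}_{10}$ degenerates to ${\rm R}_{09}$.
\end{itemize}
To find each one, I would start from the diagonal scalings $(t^a e_1, t^b f_1, t^c f_2)$ and add off-diagonal terms only when needed, as was done for ${\rm R}_{04}^{-\frac{t}{t+2}}\to{\rm R}_{05}$.

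For the dimension and independence claims, we compute orbit dimensions as $9-\dim\mathrm{Der}$, with $\dim G=1+4=5$.
\begin{itemize}
\item We expect $\dim\overline{\mathcal{O}({\rm R}_{01}^\alpha)}=5$, that is, $\dim\mathrm{Der}=1$ plus one parameter. This gives the dimension $5$ of the variety.
\item The others have dimension at most $4$.
\end{itemize}
Non-degenerations between the candidate rigid algebras follow from invariants and Lemma \ref{gmain}:
\begin{itemize}
\item $\big(({\rm R}_{01}^\alpha)_0\big)^2=0$ blocks ${\rm R}_{01}^\alpha$ from reaching every algebra with $e_1e_1\neq 0$.
\item ${\rm R}_{08}$ has $(R_0)^2=0$ and a nilpotent structure, so it cannot reach the idempotent algebras.
\item ${\rm R}_{12}$, ${\rm R}_{10}$ and ${\rm R}_{14}$ satisfy $\mathfrak B$-stable linear conditions on structure constants of the form $\{c_{11}^1=c_{12}^2=\dots\}$, which separate them from ${\rm R}_{15}$, ${\rm R}_{18}$, ${\rm R}_{20}$, ${\rm R}_{22}$, ${\rm R}_{23}$ and ${\rm R}_{24}$.
\item For ${\rm R}_{08}$ versus ${\rm R}_{01}^\alpha$ we use orbit dimension.
\end{itemize}

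The main obstacle is showing that ${\rm R}_{08}$ is not in $\overline{\mathcal{O}({\rm R}_{01}^\alpha)}$, and symmetrically confirming which of ${\rm R}_{02}$, ${\rm R}_{03}$ and ${\rm R}_{04}^\alpha$ lie in the family closure. A family closure can contain algebras of equal orbit dimension, so dimension alone is not enough. I would first try a $\mathfrak B$-stable condition satisfied by all ${\rm R}_{01}^\alpha$ but not by ${\rm R}_{08}$. A candidate is the relation between the coefficients of $f_1f_2$ and $f_2f_1$ together with $e_1f_1$ and $f_1e_1$: for ${\rm R}_{01}^\alpha$ the combination $c_{12}^3+c_{21}^3$ is proportional to $c_{23}^1$ in a fixed way. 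Failing that, I would compare the invariant $\dim\mathrm{Ann}$ or the dimension of $R_1R_0$ along the family.
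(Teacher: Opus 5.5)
You follow the same scheme as the paper. You take Theorem~\ref{-11geo1} as the base, add degenerations from the family ${\rm R}_{01}^{\alpha}$ to ${\rm R}_{02}$, ${\rm R}_{03}$, ${\rm R}_{04}^{\beta}$, ${\rm R}_{28}$ and from ${\rm R}_{12}$, and separate the rest using $\big(({\rm R}_{01}^\alpha)_0\big)^2=\big(({\rm R}_{08})_0\big)^2=0$ and $\mathfrak B$-stable conditions. The genuine gap is the step you yourself flag: proving ${\rm R}_{08}\notin\overline{\mathcal O({\rm R}_{01}^\alpha)}$. This cannot be done, because it is false.

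Put $\alpha=t-1$ and take $E_1=e_1$, $E_2=\frac{\sqrt{-t}}{2}f_1$, $E_3=\frac{\sqrt{-t}}{2}f_2$. In ${\rm R}_{01}^{t-1}$ this gives
\[
E_1E_2=tE_3,\qquad E_2E_1=(2-t)E_3,\qquad E_2E_3=\big(1-\tfrac{t}{2}\big)E_1,\qquad E_3E_2=E_1,
\]
with all other products zero. The limit as $t\to 0$ is exactly ${\rm R}_{08}$. Hence the $4$-dimensional $\overline{\mathcal O({\rm R}_{08})}$ lies inside the $5$-dimensional $\mathcal C_1$. No $\mathfrak B$-stable condition, annihilator or $\dim R_1R_0$ comparison can separate them, so $\mathcal C_2$ is not an irreducible component. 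The statement as written (twelve components, eleven rigid algebras) is therefore not provable.

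The paper's proof does not avoid this either. It invokes the condition $c_{12}^3c_{23}^1=c_{21}^3(c_{32}^1-c_{23}^1)$, but ${\rm R}_{08}$ satisfies it: $0\cdot 1=2\cdot(1-1)$. For multiplications supported on $e_1f_1,f_1e_1,f_1f_2,f_2f_1$, this condition is just the right alternative identity $(f_1,f_1,f_2)=(f_1,f_2,f_1)$. So it holds for both algebras and separates nothing.

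Some smaller points:
\begin{itemize}
\item The orbit dimension is $5-\dim{\rm Der}_0$ (even derivations), not $9-\dim{\rm Der}$. With the correct formula, ${\rm R}_{01}^{\alpha}$ ($\alpha\neq 3$) and ${\rm R}_{08}$ both have $\dim{\rm Der}_0=1$.
\item For ${\rm R}_{04}^{\beta}$, scaling $e_1$ alone makes the odd--odd products blow up. The paper instead applies $(te_1,tf_1,f_2)$ to ${\rm R}_{01}^{(\beta+3)/(1-\beta)}$, which is an ordinary degeneration, not a family one.
\item Your ${\rm R}_{12}\to{\rm R}_{11}$ via $(e_1,tf_1,f_2)$ is correct. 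The paper labels this change ${\rm R}_{12}\to{\rm R}_{09}$, but it actually produces ${\rm R}_{11}$. The same change takes ${\rm R}_{10}$ to ${\rm R}_{09}$, and the labels in Theorem~\ref{-11geo1} are swapped in the matching way. So the final absorption of ${\rm R}_{09}$ and ${\rm R}_{11}$ is unaffected.
\end{itemize}
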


\begin{proof}
Taking results from Theorem \ref{-11geo1},  we have to mention only five  new degenerations
\begin{center} 
${\rm R}_{01}^{\frac{t+6}{2-t}} \ {\xrightarrow{ \big(    
t\sqrt{4-2 t} e_1,\ 
\sqrt{1-\frac{t}{2}}f_1+2f_2,\  
-\frac{t}{2} \sqrt{1-\frac{t}{2}} f_1+tf_2 \big)}} \ {\rm R}_{02};$ \ 
${\rm R}_{01}^{\frac{2}{t^2}-1} \ {\xrightarrow{ \big(    
  e_1,\ 
\frac{t}{\sqrt{2}}f_1+ f_2,\  
\frac{t^2}{1-2 t^2}f_1+\frac{\sqrt{2} \left(t^2-1\right)}{t \left(2 t^2-1\right)}f_2\big)}} \ {\rm R}_{03};$ \  
${\rm R}_{01}^{\frac{\alpha+3}{1-\alpha}} \ {\xrightarrow{ \big(    
te_1,\ 
tf_1,\  
 f_2\big)}} \ {\rm R}_{04}^\alpha;$ \  
${\rm R}_{12}  \ {\xrightarrow{ \big(    
  e_1,\ 
t f_1,\  
  f_2\big)}} \ {\rm R}_{09};$ \  
${\rm R}_{01}^{\frac{2}{t}-1} \ {\xrightarrow{ \big(    
  e_1,\ 
t^2f_1+\frac{1}{2 t^2-4 t^3}f_2,\  
2 t^5 (2 t-1)f_1+tf_2\big)}} \ {\rm R}_{28}.$

\end{center}
 
\noindent
All non-degenerations follow from the following observations.
\begin{enumerate}

    \item[$\bullet$] 
${\rm R}_{01}^\alpha   \not\to 
  {\rm R}_{08},$  since ${\rm R}_{01}^\alpha$ satisfies 
  $ \big\{ c_{12}^3 c_{23}^1 = c_{21}^3 (c_{32}^1 - c_{23}^1)\big\}.$

    \item[$\bullet$] 
$\big\{{\rm R}_{01}^\alpha, \ {\rm R}_{08} \big\} 
   \not\to  \left\{ 
   \begin{array}{l}
   {\rm R}_{10}, {\rm R}_{12}, {\rm R}_{14}, {\rm R}_{15},{\rm R}_{18},\\ {\rm R}_{19}, {\rm R}_{20}, {\rm R}_{22}, {\rm R}_{23}, {\rm R}_{24}
   \end{array} \right\},$
   since 
$\big(({\rm R}_{01}^\alpha)_0 \big)^2 = \big(( {\rm R}_{08})_0\big)^2 =0.$

     \item[$\bullet$] 
$ {\rm R}_{12} 
   \not\to  \big\{  {\rm R}_{15},  {\rm R}_{19}, {\rm R}_{20}, {\rm R}_{22}, {\rm R}_{23}, {\rm R}_{24}\big\},$
   since ${\rm R}_{12}$ satisfies 
$\big\{ c_{11}^1=c_{12}^2, \ c_{21}^2=c_{13}^3=c_{31}^3=0 \big\}.$ 

\end{enumerate}

\noindent The dimensions of orbit closures are given below:
\begin{longtable}{lclclclclclclclclcl}

&&&&&&& &
${\rm dim} \ \overline{\mathcal{O}( {\rm R}_{01}^\alpha)}  $&$ = $& 
 $5;$\\ 

${\rm dim} \ \overline{\mathcal{O}( {\rm R}_{08})}  $&$ = $& 
${\rm dim} \ \overline{\mathcal{O}( {\rm R}_{10})}  $&$ = $& 
${\rm dim} \ \overline{\mathcal{O}( {\rm R}_{12})}  $&$ = $& 
${\rm dim} \ \overline{\mathcal{O}( {\rm R}_{14})}  $&$ = $& 
${\rm dim} \ \overline{\mathcal{O}( {\rm R}_{18})}  $&$ = $& 
 $4;$\\ 

&&  &  &
  & &
${\rm dim} \ \overline{\mathcal{O}( {\rm R}_{15})}  $&$ = $&
${\rm dim} \ \overline{\mathcal{O}( {\rm R}_{20})}  $&$ = $& $3;$\\

&&${\rm dim} \ \overline{\mathcal{O}( {\rm R}_{19})}  $&$ = $&
${\rm dim} \ \overline{\mathcal{O}( {\rm R}_{22})}  $&$ = $&
${\rm dim} \ \overline{\mathcal{O}( {\rm R}_{23})}  $&$ = $&
${\rm dim} \ \overline{\mathcal{O}( {\rm R}_{24})}  $&$ = $& $1.$\\
\end{longtable}

\end{proof}

\begin{theorem}\label{b-11geo2}
The variety of complex $3$-dimensional binary $\big(-1,1\big)$-superalgebras of type $(2,1)$  has 
dimension  $5$   and it has  $15$  irreducible components defined by  
\begin{center}
$\mathcal{C}_1=\overline{\mathcal{O}( {\bf R}_{01})},$ \
$\mathcal{C}_2=\overline{\mathcal{O}( {\bf R}_{03})},$ \
$\mathcal{C}_3=\overline{\mathcal{O}( {\bf R}_{04})},$ \
$\mathcal{C}_4=\overline{\mathcal{O}( {\bf R}_{06})},$ \
$\mathcal{C}_5=\overline{\mathcal{O}( {\bf R}_{08})},$ \\
$\mathcal{C}_6=\overline{\mathcal{O}( {\bf R}_{12})},$ \
$\mathcal{C}_7=\overline{\mathcal{O}( {\bf R}_{14})},$ \
$\mathcal{C}_8=\overline{\mathcal{O}( {\bf R}_{23})},$ \
$\mathcal{C}_9=\overline{\mathcal{O}( {\bf R}_{25})},$ \
$\mathcal{C}_{10}=\overline{\mathcal{O}( {\bf R}_{28})},$ \\
$\mathcal{C}_{11}=\overline{\mathcal{O}( {\bf R}_{30})},$ \
$\mathcal{C}_{12}=\overline{\mathcal{O}( {\bf R}_{31})},$ \
$\mathcal{C}_{13}=\overline{\mathcal{O}( {\bf R}_{32})},$ \ 
$\mathcal{C}_{14}=\overline{\mathcal{O}( {\bf R}_{33})},$ \ and 
$\mathcal{C}_{15}=\overline{\mathcal{O}( {\bf R}_{36})}.$ 
 
 \end{center}
In particular, there are only $15$ rigid superalgebras in this variety.
 
\end{theorem}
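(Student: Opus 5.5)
The plan is to build on Theorem \ref{-11geo2} and Theorem \ref{altgeo2}, as the earlier proofs do. By the last corollaries of the algebraic section, a binary $\big(-1,1\big)$-superalgebra of type $(2,1)$ is either binary associative or one of ${\bf R}_{06},{\bf R}_{08},{\bf R}_{12},{\bf R}_{14},{\bf R}_{19}$. Moreover, every $\big(-1,1\big)$-superalgebra of type $(2,1)$ is associative. So the variety is the union of the binary associative variety of Theorem \ref{altgeo2} and the closures of the orbits of these five new superalgebras. The old components ${\bf R}_{01},{\bf R}_{03},{\bf R}_{04},{\bf R}_{05},{\bf R}_{07},{\bf R}_{23},{\bf R}_{25},{\bf R}_{28},{\bf R}_{30},{\bf R}_{31},{\bf R}_{32},{\bf R}_{33},{\bf R}_{36}$ must be re-examined. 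The statement drops ${\bf R}_{05}$ and ${\bf R}_{07}$ and adds ${\bf R}_{06},{\bf R}_{08},{\bf R}_{12},{\bf R}_{14}$. Hence ${\bf R}_{19}$ must lie in some closure, and the same must hold for ${\bf R}_{05},{\bf R}_{07}$.

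\textbf{Degenerations.} First I give explicit one-parameter families of basis changes. The natural guesses are the following.
\begin{itemize}
\item ${\bf R}_{06}\to{\bf R}_{05}$ and ${\bf R}_{08}\to{\bf R}_{07}$, via $(e_1,e_2,tf_1)$, which kills the term $f_1f_1$.
\item ${\bf R}_{06}\to{\bf R}_{14}$ or ${\bf R}_{13}$, via $(e_1,te_2,f_1)$. This shows ${\bf R}_{14}$ cannot be a component unless no such map exists. Since the statement lists ${\bf R}_{14}$ as a component, I expect that here $f_1f_1=e_1$ obstructs the degeneration. I will check this against the orbit dimensions, since the components must not degenerate to one another.
\item ${\bf R}_{12}\to{\bf R}_{11}$ via $(e_1,e_2,tf_1)$.
\item ${\bf R}_{19}$ should be a limit of ${\bf R}_{06}$ or ${\bf R}_{08}$. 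I try a change of the type used for ${\bf R}_{03}\to{\bf R}_{22}$, namely $(e_1+e_2,\,-t^2e_2,\,tf_1)$ applied to ${\bf R}_{06}$ or ${\bf R}_{08}$, which makes $e_2$ a nilpotent element with $f_1f_1=e_2$.
\end{itemize}
I verify each family by computing the limiting structure constants.

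\textbf{Non-degenerations and dimensions.} Next I compute the orbit closure dimensions as $9-\dim\mathfrak{Der}$. I expect ${\bf R}_{03}$ to have dimension $5$, and ${\bf R}_{06},{\bf R}_{08}$ together with ${\bf R}_{01},{\bf R}_{04}$ to have dimension $4$. I expect ${\bf R}_{12},{\bf R}_{14}$ to have dimension $3$ or $4$, and the rest to have smaller dimension. For each pair in which a larger component might degenerate to a smaller one, I use Lemma \ref{gmain} with $\mathfrak{B}$-stable invariants of the kind used before.
\begin{itemize}
\item Commutativity of the even part separates ${\bf R}_{03},{\bf R}_{06},{\bf R}_{08},{\bf R}_{12},{\bf R}_{14}$ from ${\bf R}_{23},{\bf R}_{25},{\bf R}_{28},{\bf R}_{30},{\bf R}_{31},{\bf R}_{32},{\bf R}_{33},{\bf R}_{36}$.
\item The closed conditions $c_{11}^1=c_{13}^3=c_{31}^3$ for ${\bf R}_{03}$, $c_{11}^1=c_{13}^3,\ c_{31}^3=0$ for ${\bf R}_{06}$, $c_{11}^1=c_{31}^3,\ c_{13}^3=0$ for ${\bf R}_{12}$, and their analogues for ${\bf R}_{08},{\bf R}_{14}$, distinguish the remaining candidates. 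For instance, they rule out ${\bf R}_{03}\to{\bf R}_{06}$ and ${\bf R}_{06}\to{\bf R}_{12}$.
\item The relation $f_1f_1\in\langle e_1\rangle$ versus $\langle e_2\rangle$ with respect to a Borel-adapted basis gives a further invariant, separating ${\bf R}_{14}$ from ${\bf R}_{08}$ and ${\bf R}_{12}$.
\end{itemize}

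\textbf{Main obstacle.} The hardest step will be ensuring that ${\bf R}_{12}$ and ${\bf R}_{14}$ are not limits of the larger-dimensional new algebras ${\bf R}_{06},{\bf R}_{08}$ or of ${\bf R}_{03}$. This requires choosing a correct Borel-stable set: upper triangular conditions that $\mu$ satisfies but that no representative of ${\bf R}_{12}$ or ${\bf R}_{14}$ satisfies. My first attempt is linear relations among the diagonal structure constants $c_{ii}^i,c_{i3}^3,c_{3i}^3$, checking on generic representatives (arbitrary triangular bases) that the target violates them. Counting then gives $15$ components, all given by rigid superalgebras, and dimension $5$ from ${\bf R}_{03}$.
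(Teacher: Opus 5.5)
Your route is the paper's. You start from Theorem~\ref{altgeo2} and add ${\bf R}_{06},{\bf R}_{08},{\bf R}_{12},{\bf R}_{14},{\bf R}_{19}$. Your degenerations ${\bf R}_{06}\to{\bf R}_{05}$ and ${\bf R}_{08}\to{\bf R}_{07}$ via $(e_1,e_2,tf_1)$, and ${\bf R}_{06}\to{\bf R}_{19}$ via $(e_1+e_2,-t^2e_2,tf_1)$, are exactly the paper's three new ones.

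The genuine problem is the step you call the main obstacle: showing that ${\bf R}_{12}$ and ${\bf R}_{14}$ are not limits of ${\bf R}_{06}$ and ${\bf R}_{08}$. This cannot be done, because they are such limits, and your own second bullet already shows one of them. In ${\bf R}_{06}$ take $E_1=e_1$, $E_2=te_2$, $F_1=f_1$. Then $E_1E_1=E_1$, $E_2E_2=tE_2$, $E_1F_1=F_1$, $F_1F_1=E_1$, and all other products vanish, so at $t=0$ you get exactly ${\bf R}_{14}$. The product $f_1f_1=e_1$ is no obstruction, since $e_1$ is not rescaled: ${\bf R}_{06}$ is just ${\bf R}_{14}$ with its null summand $\mathbb{C}e_2$ made idempotent. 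Similarly, in ${\bf R}_{08}$ take $E_1=e_1$, $E_2=t^2e_2$, $F_1=tf_1$. Then $E_1E_1=E_1$, $E_2E_2=t^2E_2$, $E_2F_1=t^2F_1$, $F_1E_1=F_1$, $F_1F_1=E_2$, which tends to ${\bf R}_{12}$. So $\overline{\mathcal{O}({\bf R}_{14})}\subsetneq\overline{\mathcal{O}({\bf R}_{06})}$ and $\overline{\mathcal{O}({\bf R}_{12})}\subsetneq\overline{\mathcal{O}({\bf R}_{08})}$. No Borel-stable invariant can separate these pairs.

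The paper's proof has the same hole. Its claims ${\bf R}_{06}\not\to{\bf R}_{14}$ and ${\bf R}_{08}\not\to{\bf R}_{12}$ rest on commutativity of the even part together with these conditions:
\begin{itemize}
\item for ${\bf R}_{06}$: $c_{11}^1=c_{13}^3$, $c_{12}^2=c_{21}^2$, $c_{31}^3=c_{32}^3=0$;
\item for ${\bf R}_{08}$: $c_{11}^1=c_{31}^3$, $c_{12}^2=c_{21}^2=c_{13}^3$, $c_{22}^2=c_{23}^3$, $c_{32}^3=0$.
\end{itemize}
But ${\bf R}_{14}$ and ${\bf R}_{12}$ have commutative even parts and satisfy these conditions in their standard bases, so nothing is excluded. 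The statement as written is therefore internally inconsistent: it lists both $\overline{\mathcal{O}({\bf R}_{06})}$ and $\overline{\mathcal{O}({\bf R}_{14})}$, and both $\overline{\mathcal{O}({\bf R}_{08})}$ and $\overline{\mathcal{O}({\bf R}_{12})}$, as components, although one of each pair contains the other. ${\bf R}_{12}$ and ${\bf R}_{14}$ are not rigid. Granting the paper's remaining non-degenerations, the list shrinks to the other $13$ components. Theorem G2 inherits the same error.

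A secondary correction concerns your dimension count. The acting group is ${\rm GL}(V_0)\times{\rm GL}(V_1)$, of dimension $5$, so $\dim\mathcal{O}=5-\dim\mathfrak{Der}_{\bar 0}$, not $9-\dim\mathfrak{Der}$. ${\bf R}_{06}$ and ${\bf R}_{08}$ have no nonzero even derivations, so their orbits have dimension $5$, not $4$. ${\bf R}_{12}$ and ${\bf R}_{14}$ have one-dimensional even derivation algebras and orbit dimension $4$. The dimension check you planned is therefore consistent with the two degenerations above rather than an obstruction to them.
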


\begin{proof}
Taking results from Theorem \ref{altgeo2},  we have to mention only three new degenerations
\begin{center} 
${\bf R}_{06}
{\xrightarrow{ \big(    
e_1,\ 
e_2 ,\  
t f_1 \big)}}  
{\bf R}_{05};$ \    
${\bf R}_{08}
{\xrightarrow{ \big(    
e_1,\ 
e_2 ,\  
t f_1 \big)}}  
{\bf R}_{07};$ \  
${\bf R}_{06}
{\xrightarrow{ \big(    
e_1+e_2,\ 
-t^2e_2 ,\  
t f_1 \big)}}  
{\bf R}_{19}.$ \

\end{center}
\noindent
All non-degenerations follow from the following observations.
\begin{enumerate}
    \item[$\bullet$] 
${\bf R}_{06}   \not\to 
\big\{ 
{\bf R}_{01},
{\bf R}_{04},
{\bf R}_{12},
{\bf R}_{14},
{\bf R}_{23},
{\bf R}_{25},
{\bf R}_{28},
{\bf R}_{30},
 {\bf R}_{31}, {\bf R}_{32}, {\bf R}_{33}, {\bf R}_{36} \big\},$ since 
$({\bf R}_{06})_0$ is commutative and 
${\bf R}_{06}$ satisfies 
$\big\{ c_{11}^1=c_{13}^3, \ 
c_{12}^2=c_{21}^2, \ 
c_{31}^3=c_{32}^3=0 \ 
\big\}.$  

     \item[$\bullet$] 
${\bf R}_{08}   \not\to 
\big\{ 
{\bf R}_{01},
{\bf R}_{04},
{\bf R}_{12},
{\bf R}_{14},
{\bf R}_{23},
{\bf R}_{25},
{\bf R}_{28},
{\bf R}_{30},
 {\bf R}_{31}, {\bf R}_{32}, {\bf R}_{33}, {\bf R}_{36} \big\},$ since 
$({\bf R}_{06})_0$ is commutative and 
${\bf R}_{06}$ satisfies 
$\big\{ c_{11}^1=c_{31}^3, \ 
c_{12}^2=c_{21}^2=c_{13}^3, \ 
c_{22}^2=c_{23}^3, \ 
 c_{32}^3=0 \ 
\big\}.$

 \item[$\bullet$] 
$\big\{{\bf R}_{12}, {\bf R}_{14} \big\}    \not\to 
\left\{ 
\begin{array}{l}
{\bf R}_{23}, {\bf R}_{25},
{\bf R}_{28}, {\bf R}_{30}, \\
 {\bf R}_{31}, {\bf R}_{32}, {\bf R}_{33}, {\bf R}_{36}
 \end{array}\right\},$ since 
${\rm dim} \big(({\bf R}_{12})_0\big)^2 = 
{\rm dim} \big(({\bf R}_{14})_0\big)^2 = 1.$  

\end{enumerate}

\noindent The dimensions of orbit closures are given below:
\begin{longtable}{lclclclclclclclclcl}
&&
${\rm dim} \ \overline{\mathcal{O}( {\bf R}_{03})} $& $=$& 
${\rm dim} \ \overline{\mathcal{O}( {\bf R}_{06})} $& $=$& 
${\rm dim} \ \overline{\mathcal{O}( {\bf R}_{08})} $& $=$& 
 $5;$\\ 
${\rm dim} \ \overline{\mathcal{O}( {\bf R}_{01})} $& $=$& 
${\rm dim} \ \overline{\mathcal{O}( {\bf R}_{04})} $& $=$& 
${\rm dim} \ \overline{\mathcal{O}( {\bf R}_{12})} $& $=$& 
${\rm dim} \ \overline{\mathcal{O}( {\bf R}_{014})} $& $=$& 
 $4;$\\ 

&&&&${\rm dim} \ \overline{\mathcal{O}( {\bf R}_{28})} $& $=$& 
${\rm dim} \ \overline{\mathcal{O}( {\bf R}_{30})} $& $=$&
$3;$\\

&&${\rm dim} \ \overline{\mathcal{O}( {\bf R}_{23})} $& $=$& 
${\rm dim} \ \overline{\mathcal{O}( {\bf R}_{25})} $& $=$& 
${\rm dim} \ \overline{\mathcal{O}( {\bf R}_{31})} $& $=$\\ 
&&
${\rm dim} \ \overline{\mathcal{O}( {\bf R}_{32})} $& $=$& 
${\rm dim} \ \overline{\mathcal{O}( {\bf R}_{33})} $& $=$& 
${\rm dim} \ \overline{\mathcal{O}( {\bf R}_{36})} $& $=$& 
 $2.$\\ 

\end{longtable}

\end{proof}

\subsubsection{Right alternative  superalgebras}

\begin{theoremG1} 
The variety of complex $3$-dimensional right alternative superalgebras of type $(1,2)$  has 
dimension  $5$   and it has  $13$  irreducible components defined by  
\begin{center}
$\mathcal{C}_1=\overline{\mathcal{O}( {\rm R}_{01}^\alpha)},$ \
$\mathcal{C}_2=\overline{\mathcal{O}( {\rm R}_{07}^\alpha)},$ \
$\mathcal{C}_3=\overline{\mathcal{O}( {\rm R}_{08})},$ \
$\mathcal{C}_4=\overline{\mathcal{O}( {\rm R}_{10})},$ \
$\mathcal{C}_5=\overline{\mathcal{O}( {\rm R}_{12})},$ \\
$\mathcal{C}_6=\overline{\mathcal{O}( {\rm R}_{14})},$ \
$\mathcal{C}_7=\overline{\mathcal{O}( {\rm R}_{16})},$ \
$\mathcal{C}_8=\overline{\mathcal{O}( {\rm R}_{18})},$ \ 
$\mathcal{C}_9=\overline{\mathcal{O}( {\rm R}_{19})},$ \\
$\mathcal{C}_{10}=\overline{\mathcal{O}( {\rm R}_{21})},$ \
$\mathcal{C}_{11}=\overline{\mathcal{O}( {\rm R}_{22})},$  \ 
$\mathcal{C}_{12}=\overline{\mathcal{O}( {\rm R}_{23})},$  and
$\mathcal{C}_{13}=\overline{\mathcal{O}( {\rm R}_{24})}.$ \

 \end{center}
In particular, there are only $11$ rigid superalgebras in this variety.
 
\end{theoremG1}

\begin{proof}
Taking results from Theorem \ref{b-11geo1},  we have to mention only two  new degenerations
\begin{center} 
${\rm R}_{16}  \ {\xrightarrow{ \big(    
e_1,\ 
t  f_1,\  
 f_2 \big)}} \ {\rm R}_{15};$   \ ${\rm R}_{21}  \ {\xrightarrow{ \big(    
e_1,\ 
t  f_1,\  
 f_2 \big)}} \ {\rm R}_{20}.$  

\end{center}
 
\noindent
All non-degenerations follow from the following observations.
\begin{enumerate}

   \item[$\bullet$] 
$ {\rm R}_{07}^\alpha  
   \not\to  \big\{  {\rm R}_{19},  {\rm R}_{22}, {\rm R}_{23}, {\rm R}_{24} \big\},$
   since 
$\big(({\rm R}_{07}^\alpha)_0 \big)^2   =0.$

     \item[$\bullet$] 
$ {\rm R}_{16} 
   \not\to  \big\{     {\rm R}_{19},   {\rm R}_{22}, {\rm R}_{23}, {\rm R}_{24}\big\},$
   since ${\rm R}_{16}$ satisfies 
$\big\{ c_{11}^1=c_{12}^2=c_{21}^2=c_{31}^3, \  c_{13}^3=0 \big\}.$

 \item[$\bullet$] 
$ {\rm R}_{21} 
   \not\to  \big\{     {\rm R}_{19},   {\rm R}_{22}, {\rm R}_{23}, {\rm R}_{24}\big\},$
   since ${\rm R}_{21}$ satisfies 
$\big\{ c_{11}^1=c_{12}^2=c_{31}^3, \ c_{21}^2=c_{13}^3=0 \big\}.$

\end{enumerate}

\noindent The dimensions of orbit closures are given below:
\begin{longtable}{lclclclclclclclclcl}

 &&&&& &
${\rm dim} \ \overline{\mathcal{O}( {\rm R}_{01}^\alpha)}  $&$ = $& 
 $5;$\\ 

${\rm dim} \ \overline{\mathcal{O}( {\rm R}_{07}^\alpha)}  $&$ = $& 
${\rm dim} \ \overline{\mathcal{O}( {\rm R}_{08})}  $&$ = $& 
${\rm dim} \ \overline{\mathcal{O}( {\rm R}_{10})}  $&$ = $& 
${\rm dim} \ \overline{\mathcal{O}( {\rm R}_{12})}  $&$ = $\\  
${\rm dim} \ \overline{\mathcal{O}( {\rm R}_{14})}  $&$ = $& 
${\rm dim} \ \overline{\mathcal{O}( {\rm R}_{16})}  $&$ = $& 
${\rm dim} \ \overline{\mathcal{O}( {\rm R}_{18})}  $&$ = $& 
${\rm dim} \ \overline{\mathcal{O}( {\rm R}_{21})}  $&$ = $& 
 $4;$\\ 

  ${\rm dim} \ \overline{\mathcal{O}( {\rm R}_{19})}  $&$ = $&
${\rm dim} \ \overline{\mathcal{O}( {\rm R}_{22})}  $&$ = $&
${\rm dim} \ \overline{\mathcal{O}( {\rm R}_{23})}  $&$ = $&
${\rm dim} \ \overline{\mathcal{O}( {\rm R}_{24})}  $&$ = $& $1.$\\
\end{longtable}

\end{proof}

\begin{theoremG2} 
The variety of complex $3$-dimensional right alternative superalgebras of type $(2,1)$  has 
dimension  $5$   and it has  $15$  irreducible components defined by  
\begin{center}
$\mathcal{C}_1=\overline{\mathcal{O}( {\bf R}_{01})},$ \
$\mathcal{C}_2=\overline{\mathcal{O}( {\bf R}_{03})},$ \
$\mathcal{C}_3=\overline{\mathcal{O}( {\bf R}_{04})},$ \
$\mathcal{C}_4=\overline{\mathcal{O}( {\bf R}_{06})},$ \
$\mathcal{C}_5=\overline{\mathcal{O}( {\bf R}_{08})},$ \\
$\mathcal{C}_6=\overline{\mathcal{O}( {\bf R}_{12})},$ \
$\mathcal{C}_7=\overline{\mathcal{O}( {\bf R}_{14})},$ \
$\mathcal{C}_8=\overline{\mathcal{O}( {\bf R}_{24})},$ \
$\mathcal{C}_9=\overline{\mathcal{O}( {\bf R}_{25})},$ \
$\mathcal{C}_{10}=\overline{\mathcal{O}( {\bf R}_{27})},$ \\
$\mathcal{C}_{11}=\overline{\mathcal{O}( {\bf R}_{30})},$ \
$\mathcal{C}_{12}=\overline{\mathcal{O}( {\bf R}_{31})},$ \
$\mathcal{C}_{13}=\overline{\mathcal{O}( {\bf R}_{32})},$ \ 
$\mathcal{C}_{14}=\overline{\mathcal{O}( {\bf R}_{34})},$ \ and 
$\mathcal{C}_{15}=\overline{\mathcal{O}( {\bf R}_{36})}.$ 
 
 \end{center}
In particular, there are only $15$ rigid superalgebras in this variety.
 
\end{theoremG2}

\begin{proof}
Taking results from Theorem \ref{b-11geo2},  we have to mention only four new degenerations
\begin{center} 
${\bf R}_{24}
{\xrightarrow{ \big(    
e_1,\ 
e_2 ,\  
t f_1 \big)}}  
{\bf R}_{23};$ \
${\bf R}_{27}
{\xrightarrow{ \big(    
e_1-e_2,\ 
t^2e_2 ,\  
t f_1 \big)}}  
{\bf R}_{28};$ \
${\bf R}_{34}
{\xrightarrow{ \big(    
e_1,\ 
e_2 ,\  
t f_1 \big)}}  
{\bf R}_{33};$ \
${\bf R}_{34}
{\xrightarrow{ \big(    
e_1-e_2,\ 
t^2 e_2 ,\  
t f_1 \big)}}  
{\bf R}_{35}.$ 
\end{center}
\noindent
All non-degenerations follow from the following observations.
\begin{enumerate}
    \item[$\bullet$] 
${\bf R}_{24}   \not\to 
\left\{ 
\begin{array}{l}{\bf R}_{25},
 {\bf R}_{31},\\ {\bf R}_{32},   {\bf R}_{36}\end{array} \right\},$ since 
    $({\bf R}_{24})_0 \not\to ({\bf R}_{25})_0$ 
and $({\bf R}_{24})_0({\bf R}_{24})_1=({\bf R}_{24})_1({\bf R}_{24})_0=0.$  

     \item[$\bullet$] 
${\bf R}_{27}   \not\to 
\big\{ 
{\bf R}_{24},
{\bf R}_{25},
{\bf R}_{30},
 {\bf R}_{31}, {\bf R}_{32},  {\bf R}_{36} \big\},$ since \begin{flushright}
$({\bf R}_{27})_0 \not\to 
\big\{ 
({\bf R}_{24})_0,
({\bf R}_{30})_0,
({\bf R}_{32})_0,  
({\bf R}_{36})_0 \big\}
 $  and 
${\bf R}_{27}$ satisfies 
$\big\{ c_{11}^1=c_{13}^3, \  c_{31}^3=0 \  \big\}.$  
\end{flushright}

   \item[$\bullet$] 
${\bf R}_{34}   \not\to 
\big\{ 
{\bf R}_{24},
{\bf R}_{25},
{\bf R}_{30},
 {\bf R}_{31}, {\bf R}_{32},  {\bf R}_{36} \big\},$ since \begin{flushright}
$({\bf R}_{34})_0 \not\to 
\big\{ 
({\bf R}_{24})_0,
({\bf R}_{30})_0,
({\bf R}_{32})_0,  
({\bf R}_{36})_0 \big\}
 $  and 
${\bf R}_{34}$ satisfies 
$\big\{ c_{11}^1=c_{13}^3=c_{31}^3  \  \big\}.$  
\end{flushright}

\end{enumerate}

\noindent The dimensions of orbit closures are given below:
\begin{longtable}{lclclclclclclclclcl}
&&
${\rm dim} \ \overline{\mathcal{O}( {\bf R}_{03})} $& $=$& 
${\rm dim} \ \overline{\mathcal{O}( {\bf R}_{06})} $& $=$& 
${\rm dim} \ \overline{\mathcal{O}( {\bf R}_{08})} $& $=$& 
 $5;$\\ 

&&${\rm dim} \ \overline{\mathcal{O}( {\bf R}_{01})} $& $=$& 
${\rm dim} \ \overline{\mathcal{O}( {\bf R}_{04})} $& $=$& 
${\rm dim} \ \overline{\mathcal{O}( {\bf R}_{12})} $& $=$ \\&& 
${\rm dim} \ \overline{\mathcal{O}( {\bf R}_{14})} $& $=$& 
${\rm dim} \ \overline{\mathcal{O}( {\bf R}_{27})} $& $=$& 
${\rm dim} \ \overline{\mathcal{O}( {\bf R}_{34})} $& $=$& 
 $4;$\\ 

&&
&&${\rm dim} \ \overline{\mathcal{O}( {\bf R}_{24})} $& $=$& 
${\rm dim} \ \overline{\mathcal{O}( {\bf R}_{30})} $& $=$&
$3;$\\

${\rm dim} \ \overline{\mathcal{O}( {\bf R}_{25})} $& $=$& 
${\rm dim} \ \overline{\mathcal{O}( {\bf R}_{31})} $& $=$ & 
${\rm dim} \ \overline{\mathcal{O}( {\bf R}_{32})} $& $=$& 
${\rm dim} \ \overline{\mathcal{O}( {\bf R}_{36})} $& $=$& 
 $2.$\\ 

\end{longtable}\end{proof}



\begin{thebibliography}{9}
 
 

 \bibitem{akl}
 Abdelwahab H., 
 Kaygorodov I.,  
  Lubkov R.,  
 The algebraic and geometric classification of  
right alternative and semi-alternative  algebras,  
Journal of Algebra, 687 (2026), 792--824.



\bibitem{kz}
 Abdurasulov K.,  Khudoyberdiyev A.,  Toshtemirova F.,
    The geometric classification of nilpotent  Lie-Yamaguti, Bol
  and compatible Lie algebras, 
    Communications in Mathematics,  33 (2025), 3, 10.
 
 \bibitem{als}
 Abdurasulov K.,   Lubkov R.,  Saydaliyev A.,
The algebraic and geometric classification of Jordan superalgebras,
SEMR, 22 (2025),   1, 813--852.





\bibitem{alb2}
Albert A.,
     The structure of right alternative algebras,
     Annals of Mathematics (3), 59 (1954), 408--417.    
     


 \bibitem{ah}
Alvarez M.   A.,   Hernández I.,  
Varieties of nilpotent Lie superalgebras of dimension $\leq 5,$ 
Forum Mathematicum, 32 (2020),  3, 641--661.


 
 \bibitem{ahk}
 Alvarez M. A.,  Hernández I.,   Kaygorodov I.,  
 Degenerations of Jordan superalgebras, 
 Bulletin of the Malaysian Mathematical Sciences Society, 42 (2019), 6, 3289--3301.


 \bibitem{BK}
Bagnoli L.,   Kožić S.,   
Double Yangian and reflection algebras of the Lie superalgebra $\mathfrak{gl}_{m|n},$ 
Communications in Contemporary Mathematics, 27 (2025),  2,   2450007.

 \bibitem{B25} 
Bai C.,   Guo L.,   Zhang R.,   
Parity duality of super r-matrices via $\mathcal O$-operators and pre-Lie superalgebras, 
Mathematical Research Letters, 32 (2025), 1, 39--80.

 \bibitem{ben} Ben Hassine A.,  Chtioui T.,  Elhamdadi M.,  Mabrouk S.,  Cohomology and deformations of left-symmetric Rinehart algebras,
 Communications in Mathematics, 32 (2024),  2, 127--152.
 

\bibitem{BBE}
 Benayadi S.,   Bouarroudj S.,   Ehret Q.,  
 Left-symmetric superalgebras and Lagrangian extensions of Lie superalgebras in characteristic $2$, 
 Journal of Pure and Applied Algebra, 229 (2025), 11,  108086.


\bibitem{BM}
Bouarroudj S.,   Mashurov F.,  
On Zinbiel and Tortkara superalgebras, 
Bulletin of the Brazilian Mathematical Society (N.S.), 56 (2025),  3,  49.


\bibitem{BC99} Burde D., Steinhoff C.,
    Classification of orbit closures of $4$--dimensional complex Lie algebras,
    Journal of Algebra, 214 (1999), 2, 729--739.



 \bibitem{4}
Camacho L.,  Fernández Ouaridi A.,   Kaygorodov I.,   Navarro R., 
Zinbiel superalgebras, New York Journal of Mathematics, 29 (2023), 1341--1362.


 \bibitem{3} Cantarini N.,   Caselli F.,   Kac V.,  
 A Lie conformal superalgebra and duality of representations for ${\rm E}(4,4),$ Advances in Mathematics, 437 (2024),  109416. 


 
 \bibitem{C24} Chapman A., Levin I.,   Alternating roots of polynomials over Cayley-Dickson algebras,  Communications in Mathematics, 32 (2024),  2, 63--70. 

 \bibitem{C25}
 Chapman A.,   Vishkautsan S., 
 Roots and right factors of polynomials and left eigenvalues of matrices over Cayley-Dickson algebras,  Communications in Mathematics, 33 (2025),  3,  1.


 \bibitem{DES}
 Daza-García A.,   Elduque A.,   Sayin U.,   
 From octonions to composition superalgebras via tensor categories,  
 Revista Matemática Iberoamericana, 40 (2024),   1, 129–152.

 
 \bibitem{2}
Drensky V.,   Ismailov N.,   Mustafa M.,   Zhakhayev B.,  
Free bicommutative superalgebras, 
Journal of Algebra, 652 (2024), 158--187.


 \bibitem{1}
Elduque A.,  Etingof P.,   Kannan A.,  
From the Albert algebra to Kac's ten-dimensional Jordan superalgebra via tensor categories in characteristic 5, 
Journal of Algebra, 666 (2025), 387--414.



  \bibitem{FKS}
Fehlberg J\'{u}nior R.,    Kaygorodov I.,  Saydaliyev A.,
The  geometric classification of symmetric Leibniz algebras, Communications in Mathematics, 33 (2025),  1, 10.
 
  \bibitem{FKS25}
Fehlberg J\'{u}nior R.,    Kaygorodov I.,  Saydaliyev A.,
The complete classification of irreducible components of varieties of Jordan superalgebras, Communications in Mathematics, 33 (2025),  3, 15.

\bibitem{G62}
 Gaĭnov A.,  
 Alternative algebras of rank 3 and 4, 
 Algebra i Logika, 2 (1963),  4, 41--46.

 
\bibitem{ikp20}
Gaĭnov A., 
Binary Lie algebras of lower ranks,
Algebra i Logika, 2 (1963),   4, 21--40.



\bibitem{GRS}
Grishkov A., Rasskazova M., Shestakov I., 
Simple binary Lie and non-Lie superalgebra has solvable even part, 
Journal of Algebra, 655 (2024), 483--492.


 
\bibitem{GRH}
Grunewald F.,  O'Halloran J.,
    Varieties of nilpotent Lie algebras of dimension less than six,
    Journal of Algebra, 112 (1988), 315--325.
    
\bibitem{GRH3}
Grunewald F., O'Halloran J., 
Deformations of Lie algebras, 
Journal of Algebra, 162 (1993), 1, 210--224. 



\bibitem{G24}
 Guterman A.,  Zhilina S.,   
 On the lengths of descendingly flexible and descendingly alternative algebras, 
 Journal of Algebra, 651 (2024), 187--220. 
 

 \bibitem{hnt}
 Hua T.,  Napedenina E.,  Tvalavadze M.,
 Partially alternative algebras, 
  Communications in Mathematics, 33 (2025), 1, 9.


 \bibitem{5}
Huang A.,  Gao Y.,   Sun J.,   
Transposed Poisson superalgebra structures on twisted ${\rm N} = 1$ Block–Lie superalgebra, 
Pacific Journal of Mathematics, 335 (2025),  1, 119--161.


 \bibitem{isaev}
Isaev I.,
    Finite-dimensional right alternative algebras generating non-finite-basable varieties, 
    Algebra and Logic, 25 (1986), 2, 136--153.


 

 \bibitem{ikm}
Ismailov N.,   Kaygorodov I.,   Mustafa M., 
The algebraic and geometric classification of nilpotent right alternative algebras, 
Periodica Mathematica Hungarica, 84 (2022), 1, 18--30. 


 

  \bibitem{k23}
  Kaygorodov I.,     
Non-associative algebraic structures: classification and structure, Communications in Mathematics,  32 (2024), 3, 1--62.

\bibitem{MS}
Kaygorodov I.,  Khrypchenko M.,  Páez-Guillán P.,
The geometric classification of non-associative algebras: a survey, Communications in Mathematics, 32 (2024), 2, 185--284.

 
\bibitem{KM} 
 Khudoyberdiyev A.,   Muratova K.,   
 Solvable Leibniz superalgebras whose nilradical has the characteristic sequence $(n-1,1 |m)$ and nilindex $n+m,$ Communications in Mathematics, 32 (2024),   2, 27--54.

   \bibitem{KS}
Kunanbayev A.,
Sartayev B.,
Binary perm algebras and alternative algebras, 
Communications in Algebra, 2025, 
54 (2026),  1, 299--307.

    \bibitem{AA} Lopatin A.,  Rybalov A.,
 On polynomial equations over split octonions,  Communications in Mathematics,  33 (2025), 3, 8.

 
   \bibitem{l24}
 Lopes S.,
Noncommutative algebra and representation theory: symmetry, structure \& invariants, Communications in Mathematics,  32 (2024), 2, 63--117.


\bibitem{mikh}
Miheev I., 
    Simple right alternative rings,
    Algebra and Logic, 16 (1977), 6, 682--710.

     



\bibitem{MPS}
Murakami L.,  Pchelintsev S.,  Shashkov O.,   
Right alternative unital bimodules over the matrix algebras of order $\geq 3,$ 
Siberian Mathematical Journal, 63 (2022),  4, 743--757.

 

\bibitem{ser76}
    Pchelintsev S., 
    The locally nilpotent radical in certain classes of right alternative rings,
    Siberian Mathematical Journal, 17 (1976),  2, 340--360

 \bibitem{ser76bi}
Pchelintsev S., Identities defining a certain variety of right-alternative algebras, Mathematical Notes, 20 (1976), 2, 651--659.


\bibitem{sergey}
    Pchelintsev S., Right alternative Malcev-admissible nil 
    algebras of bounded index,
    Siberian Mathematical Journal, 35 (1994), 4, 754--759.



\bibitem{ser13}
    Pchelintsev S.,
    On the commutator nilpotency step of strictly $\big(-1,1\big)$-algebras, 
    Mathematical Notes, 93 (2013), 5-6, 756--762.



 


\bibitem{seroleg}
    Pchelintsev S.,  Shashkov O., 
    Simple right alternative superalgebras, 
    Journal of Mathematical Sciences (N.Y.), 284 (2024),  4, 527--544.


\bibitem{PSS}
    Pchelintsev S.,  Shashkov O.,  Shestakov I.,   
    Right alternative bimodules over Cayley algebra and coordinatization theorem, 
    Journal of Algebra, 572 (2021), 111--128.

 

 



\bibitem{skos3}
Skosyrskiy V.,
    Nilpotency in Jordan and right alternative algebras,
    Algebra and Logic, 18 (1979),  1, 73–85, 122–123.


\bibitem{skos1}
Skosyrskiy V., 
    Right alternative algebras with minimality condition for right ideals, 
    Algebra and Logic, 24 (1985), 2, 205–210.

 


 \bibitem{thedy77}
    Thedy A., 
    Nil-semisimple right alternative algebras,
    Journal of Algebra, 48 (1977), 2, 390--400.

\bibitem{thedy78}
    Thedy A.,
    Right alternative algebras and Wedderburn principal theorem,
    Proceedings of the American Mathematical Society, 72 (1978), 3, 427--435.

 

\bibitem{ualbay85}
    Umirbaev U., 
    The word problem for Jordan and right-alternative algebras, 
    Some questions and problems in analysis and algebra, 120--127, 
    Novosibirsk. Gos. Univ., Novosibirsk, 1985.
 

 

\end{thebibliography}
\end{document}